\newtheorem*{claii}{Claim}
\newtheorem{coro}{Corollary}
\newtheorem{defi}{Definition }[section]
\newtheorem{lemm}{Lemma}[section]
\newtheorem{prop}{Proposition}[section]
\newtheorem{proper}{Property}
\newtheorem{propri}{Properties}[section] 
\newtheorem{theo}{Theorem} 
\theoremstyle{definition}
\newtheorem{rema}{Remark}
\def\sup{\mathop{\rm Sup}}
\newfont{\df}{cmssbx10} 
\newtheorem*{definition*}{Definition }
\newtheorem*{notation*}{Notation}
\newcommand{\iet}{\mathcal G}% Echanges d'intervalles
\newcommand{\ietf}{\overline{\mathcal G}}%Echanges d'intervalles avec flips
\newcommand{\rl}{\mathcal R}%Reversible length
\newcommand{\mo}{\text{mod }}%modulo
\newcommand{\disc}{\text{Disc}}%discontinuity set
\newcommand{\id}{\text{Id}}%identity
\newcommand{\p}{\mathcal P} %partitions
\newcommand{\I}{\mathcal I}%symmetry
\newcommand{\ie}{\textsc{iet }}
\title[Uniform perfectness of IET and FIET]{Uniform perfectness for Interval Exchange Transformations with or without Flips.}
\author{ Nancy Guelman and Isabelle Liousse  }
\begin{document}

\address{{\bf  Nancy  GUELMAN}, IMERL, Facultad de Ingenier\'{\i}a, {Universidad de la Rep\'ublica, C.C. 30, Montevideo, Uruguay.} \emph {nguelman@fing.edu.uy}.}

\address{{\bf Isabelle LIOUSSE}, Univ. Lille, CNRS, UMR 8524 - Laboratoire Paul Painlevé, F-59000 Lille, France. \emph {isabelle.liousse@univ-lille.fr}.}

\begin{abstract}
Let $\mathcal G$ be the group of all  Interval Exchange Transformations. Results of Arnoux-Fathi (\cite{Ar1}), Sah (\cite{Sah})  and Vorobets (\cite{Vo}) state that $\mathcal G_0$  the subgroup of $\mathcal G$ generated by its commutators is simple. In (\cite{Ar1}), Arnoux proved that the group $\overline{\mathcal G}$ of all Interval Exchange Transformations with flips is simple.
 
We establish that every element of $\overline{\mathcal G}$ has a commutator length not exceeding $6$. Moreover, we give conditions on $\mathcal G$ that guarantee that the commutator lengths of the elements of $\mathcal G_0$ are uniformly bounded, and in this case for any $g\in \mathcal G_0$ this length is at most $5$. 

{\textbf{As analogous arguments work for the involution length in $\ietf$, we add an appendix whose purpose is to prove that every element of $\overline{\mathcal G}$ has an involution length not exceeding $12$.}}\end{abstract}

\maketitle
%\tableofcontents
%-------------------Section 1-2 : Introduction and Preliminaries
\section{Introduction.}

Let   $J=[a,b)$  be  a half-open interval.

\smallskip

An {\bf interval exchange transformation (IET)} of $J$ is a right continuous bijective map $f: J \rightarrow J$ defined by a finite partition of $J$ into half-open subintervals $I_i$ and a  reordering of these intervals by translations. We denote by $\mathcal G_J$ the group consisting in all IET of $J$.

\smallskip

An \textbf{interval exchange transformations with flips (FIET)} on $J$ is a bijection $f : J\rightarrow J$ for which there exists a subdivision $a = a_1 < \cdots < a_m <a_{m+1} = b$ such that $f\vert_{(a_{i},a_{i+1})}$ is a continuous isometry.  
Note that $f$ is not necessarily  right continuous since the orientation of some intervals can be reversed, and there exists a \textbf{flip-vector} {\footnotesize{$U(f)=(u_1,...,u_m)\in \{-1,1\}^n$}} such that $f\vert_{(a_{i},a_{i+1})}$ is 
%an affine map of slope $u_i$ / 
a direct isometry if $u_i=1$ or an indirect one if $u_i=-1$.

It is worth noting that several authors consider FIET as piecewise isometries of $J$ with a finite number of discontinuity points reversing at least one of the intervals of continuity. This is not our case, so an IET is an FIET.

\smallskip

{\hskip 3 truecm\unitlength=0,16 mm\begin{picture}(210,210)
%le carr\'{e} autour
\put(0,0){\line(1,0){200}} \put(0,200){\line(1,0){200}}
\put(0,0){\line(0,1){200}} \put(200,0){\line(0,1){200}}
%les lignes verticales pointill\'{e}es
\put(50,0){\dashbox(0,200){}} \put(100,0){\dashbox(0,200){}} \put(150,0){\dashbox(0,200){}}
%les lignes horizonticales pointill\'{e}es
\put(0,50){\dashbox(200,0){}} \put(0,100){\dashbox(200,0){}} \put(0,150){\dashbox(200,0){}}
% le FIET
\put(0,0){\line(1,1){50}}
\put(150,150){\line(1,1){50}}
\put(50,100){\line(1,-1){50}}
\put(100,150){\line(1,-1){50}}
%les points marqués
\put(-2,-2){$\scriptscriptstyle\bullet$} \put(48,98){$\scriptscriptstyle\bullet$}
\put(98,48){$\scriptscriptstyle\bullet$}  \put(148,148){$\scriptscriptstyle\bullet$}
\end{picture} } \hskip 1truecm
{\unitlength=0,16 mm\begin{picture}(210,210)
%le carr\'{e} autour
\put(0,0){\line(1,0){200}} \put(0,200){\line(1,0){200}}
\put(0,0){\line(0,1){200}} \put(200,0){\line(0,1){200}}
%les lignes verticales pointill\'{e}es
\put(50,0){\dashbox(0,200){}} \put(100,0){\dashbox(0,200){}} \put(150,0){\dashbox(0,200){}}
%les lignes horizonticales pointill\'{e}es
\put(0,50){\dashbox(200,0){}} \put(0,100){\dashbox(200,0){}} \put(0,150){\dashbox(200,0){}}
% le FIET
\put(0,0){\line(1,1){50}}
\put(150,150){\line(1,1){50}}
\put(50,100){\line(1,-1){50}}
\put(100,150){\line(1,-1){50}}
%les points marqués
\put(-2,-2){$\scriptscriptstyle\bullet$} \put(98,148){$\scriptscriptstyle\bullet$}
\put(48,48){$\scriptscriptstyle\bullet$}  \put(148,98){$\scriptscriptstyle\bullet$}
\end{picture}}

\hskip 4.7 truecm \textit{Figure 1. Two equivalent FIET.} %Two FIET that coincide on a cofinite subset

\smallskip

We say that two FIET on $J$, $f$ and $g$ are equivalent %and we note $f \sim g$} 
if the set $\{ x \in J \ : \  f(x)\not=g(x)\}$ is finite (see Figure 1). We denote by $\overline{\mathcal G}_J$ the corresponding quotient set.
%Let us set the equivalence relation  \textcolor{green}{ in} FIET  on $J$ given by $f \sim g$ if the set $\{ x \in J \mbox{ such that } f(x)=g(x)\}$ is cofinite (see Figure 1). We denote by $\overline{\mathcal G}_J$ the corresponding quotient set.
Note that $\overline{\mathcal G}_J$ is the quotient group of the FIET group by its normal subgroup consisting of elements which are trivial except possibly at finitely many points. By abuse of terminology, elements of $\overline{\mathcal G}_J$ are also called FIET. The map $U$ is still well defined on $\overline{\mathcal G}_J$ and the set of all elements of $\overline{\mathcal G}_J$ such that $U(f)\subset \{1\}^m$ is identified with $\mathcal G_J$.

\medskip

Let $f$ be an IET or an FIET on $J$. The \textbf{continuity intervals} of $f$ are the maximal connected subsets of $J$ on which $f$ is continuous and they are denoted by $I_1, \cdots, I_m$. The \textbf{associated permutation} $\pi=\pi(f)\in \mathcal S_m$ is defined by $f(I_i)=J{\pi(i)}$, where the $J_j$'s are the ordered images of the $I_i$'s. By convention, $f$ is not continuous at the left endpoint of $J$ and we define $BP(f)$ to be the set of the discontinuity points of $f$. Note that $BP(f^{-1})=f(BP(f))$ and $BP( f \circ g) \subset BP(g) \cup g^{-1}(BP(f)).$ 

\medskip
Let $m$ be a positive integer, we denote by $\mathcal G_m$ [resp. $\mathcal G_{m,\pi}$] the set of all elements of $\mathcal G$ having at most $m$ discontinuity points [resp. whose associated permutation is $\pi\in \mathcal S_m$].

\medskip

From now on, without mention of the defining interval $J$, an IET or an FIET is defined on $I=[0,1)$ and  $\mathcal G_I$ and  $\overline{\mathcal G }_I$ will be denoted by $\mathcal G$ and $\overline{\mathcal G }$ respectively.

\begin{rema} \label{rem1} The group $\mathcal G_J$ [resp. $\overline{\mathcal G}_J$] is conjugated by the direct homothecy that sends $J$ to $I$ to the group $\mathcal G$ [resp. $\overline{\mathcal G}$]. The subgroup of $\mathcal G$ [resp. $\overline{\mathcal G}$] consisting of elements with support in $J$ can be identified, by taking restriction, with $\mathcal G_J$ [resp. $\overline{\mathcal G}_J$]. 
\end{rema}

Since the late seventies, the dynamics and the ergodic properties of a single interval exchange transformation were intensively studied (see e.g. Viana survey \cite{Vi}). A natural extension is to consider the dynamics in terms of group actions. The most famous problem was raised by Katok: does $\mathcal G$ contain copies of $\mathbb F_2$, the free group of rank $2$~? Dahmani, Fujiwara and Guirardel established that such subgroups are rare (\cite{DFG1} Theorem 5.2). 

More generally, one can ask for a description of possible subgroups of $\mathcal G$. According to Novak \textit{there is no distortion in $\mathcal G$} (\cite{No} Theorem 1.3) and as a standard consequence \textit{any finitely generated nilpotent subgroup of $\mathcal G$ is virtually abelian} (see e.g. \cite{GLIET}). 

Among many things, Dahmani, Fujiwara and Guirardel proved that \textit{any finitely generated subgroup of $\mathcal G$ is residually finite} (\cite{DFG1} Theorem 7.1), \textit{$\mathcal G$ contains no infinite Kazhdan groups} (\cite{DFG1} Theorem 6.2), \textit{any finitely generated torsion free solvable subgroup of $\mathcal G$ is virtually abelian} (\cite{DFG} Theorem 3)  and provide examples of non virtually abelian solvable subgroups of $\mathcal G$ (\cite{DFG} Theorem 6). Thus, finding torsion free finitely generated non virtually abelian subgroups of $\mathcal G$ seems very difficult, especially as  works of Juchenko, Monod (\cite{JM1}) suggest that $\mathcal G$ could be amenable as it is conjectured by Cornulier. 

\smallskip

The group $\mathcal G$ shares many of the properties of the group of piecewise affine increasing homeomorphisms of the unit interval, $PL^+ (I)$. For instance these two groups are not simple but have simple derivated subgroups (see \cite{Ep} for the PL case). As noted in Remark 1.2 of \cite{No2}, they satisfy no law (i.e. there does not exist $\omega \in \mathbb F_2\setminus\{e\}$ such that $\phi(\omega)=Id$ for every homomorphism $\phi : \mathbb F_2 \rightarrow G$), their nilpotent subgroups are virtually abelian (see \cite{FaFr} for the PL case) and the main result of \cite{DFG1} can be seen has a generic version of the Brin and Squier theorem \cite{BS} which asserts that $PL^+ (I)$ does not contain non abelian free subgroups. It's however still unknown whether $PL^+(I)$ is amenable. 

\smallskip

Considerably less is known about $\overline{\mathcal G}$. However, due to their connections with non oriented measured foliations on surfaces and billiards, the dynamics and ergodic properties of a single FIET were firstly explored by Gutierrez (\cite{Gu}) and Arnoux (\cite{Ar1}).

\smallskip

%In the case of 
Dealing with irreducible permutations, Keane (\cite{Ke}) proved that almost all IET are minimal and Masur (\cite{Mas}) and Veech (\cite{Vee}) that almost all are uniquely ergodic. For FIET that reverse orientation in at least one interval, Nogueira (\cite{Nog}) proved that almost all have periodic points so are nonergodic. He also exhibited an example of a minimal uniquely ergodic one (see also \cite{Gutetal} and  \cite{LinSol}). Recently, Skripchenko and Troubetzkoy gave bounds for the Hausdorff dimension of the set of minimal maps (\cite{SkrTro}) and Hubert and Paris-Romaskevich described all the minimal maps having $4$ continuity  intervals (see \cite{HR} Theorem 6 p 39-40).

\smallskip

The decomposition into minimal and periodic components was first studied for measured surfaces flows by Mayer (\cite{May}) and restated for IET by Arnoux (\cite{Ar}) %(Proposition p. 20) 
and Keane (\cite{Ke}). For FIET that reverse orientation in at least one interval of the $m$ continuity intervals, Nogueira, Pires and Troubetzkoy proved that the sum of number of periodic components and twice the number of minimal components is bounded by $m$ (\cite{NogPirTro}). 

As mentioned by Paris-Romaskevich in \cite{PaRo}, {\it one can interest ourselves in the dynamics of FIET from the point of view of geometrical group theory}: describe the possible groups that can be realized as groups of FIET or establish algebraic properties of the whole group. 

\smallskip

Here we shall be concerned solely with the structure of the whole groups $\mathcal G$ and $\overline{\mathcal G}$. This is also motivated by the algebraic study of other transformation groups, particularly groups of homeomorphisms of low dimensional manifolds, that was initiated by Schreier and Ulam in 1934 (\cite{SU}) who were interested in the simplicity of such groups.

\smallskip

We recall that, given $G$ a group,
\begin{itemize}
\item  a  \textbf{commutator} in $G$ is an element of $G$ of the form $[f,g]=fgf^{-1} g^{-1}$ with $f,g \in G$.
\item  $G$ is \textbf{perfect} if $ G=[G,G]$ the subgroup of $G$ generated by its commutators.
\item  $G$ is \textbf{simple} if any normal subgroup of $G$ is either $G$ or trivial.  
\end{itemize}

In the seventies, lots of homeomorphisms or diffeomorphisms groups were studied by Epstein, Herman, Thurston, Mather, Banyaga, and proved to be simple; these works are survey in the books \cite{Ba} or \cite{Bou}. For interval exchange transformations, it has been shown by Arnoux (\cite{Ar1} III \S 2.4), Sah (\cite{Sah}) and Vorobets (\cite{Vo}) that the subgroup $\mathcal G_0$ of $\mathcal G$ generated by its commutators is simple. In \cite{Ar1} (III \S 1.4), Arnoux  proved that $\overline{\mathcal G}$ is simple, this unpublished result has been recently recovered by Lacourte (\cite{Lac1}). In order to sharpen this property, it is convenient to give
\begin{defi}
Let $G$ be a group and $g\in  [G,G] $, the \textbf{commutator length} of $g$, denoted by $c(g)$, is the least number $c$ such that $g$ is a product of $c$ commutators. We set $c(G)= \sup\{ c(g), g \in [G,G]\}$ and we say that $G$ is \textbf{uniformly perfect} if $c(G)$ is finite.
\end{defi}

The main theorems of this paper are
\begin{theo}\label{thG} $\displaystyle c(\overline{\mathcal G})\leq 6.$\end{theo}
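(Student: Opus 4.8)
The plan is to combine one soft group-theoretic observation with one genuinely combinatorial decomposition of FIET, carrying the whole argument in the language of flips; this is also why the same scheme produces the involution bound announced in the appendix.

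\emph{The soft ingredient.} If $\alpha,\beta$ are involutions of a group and $\beta=\gamma\alpha\gamma^{-1}$, then $\alpha\beta=\alpha\gamma\alpha\gamma^{-1}=\alpha\gamma\alpha^{-1}\gamma^{-1}=[\alpha,\gamma]$, so a product of two conjugate involutions is a single commutator. Hence to get $c(\overline{\mathcal G})\le 6$ it suffices to write an arbitrary $f\in\overline{\mathcal G}$ as a product $\iota_1\iota_2\cdots\iota_{12}$ of single-interval flips in which $\iota_{2k-1}$ and $\iota_{2k}$ have continuity intervals of equal length for each $k$: two flips $\sigma_J,\sigma_K$ with $|J|=|K|$ are conjugate in $\overline{\mathcal G}$ (via any FIET taking $J$ to $K$), so each consecutive pair is a commutator; read without the pairing, the same $12$ flips give the involution bound. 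Throughout I would exploit that flips invert rotations: $\tau R_a\tau=R_a^{-1}$ for the full flip $\tau:x\mapsto 1-x$, whence $R_a=[R_{a/2},\tau]$; applying this inside a subinterval (Remark~\ref{rem1}) shows that the restricted rotation performed on a subinterval and equal to the identity elsewhere is itself a single commutator.

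\emph{The structural reduction.} First I would factor $f=g_1g_2$ with each $g_i$ fixing a non-degenerate interval, by splitting at a breakpoint near $0$: choose a short interval $B$ inside one continuity interval of $f$ and a short interval $A$ disjoint from $B\cup f(B)$, let $g_1$ agree with $f$ on $B$, be the identity on $A$, and be an arbitrary FIET on the complementary region, and put $g_2=g_1^{-1}f$; then $g_1$ fixes $A$ and $g_2$ fixes $B$. After conjugating by a rotation (which preserves commutator length) each $g_i$ is supported in a proper subinterval, hence by Remark~\ref{rem1} is, up to rescaling, an element of a copy $\overline{\mathcal G}_{J_i}\cong\overline{\mathcal G}$ sitting inside $\overline{\mathcal G}$ with a reserved interval. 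So it is enough to prove that an FIET possessing a reserved interval is a product of at most $3$ commutators — equivalently, of $6$ flips grouped by length into $3$ pairs — after which $f=g_1g_2$ costs at most $6$.

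\emph{The heart and the main obstacle.} What remains is to decompose an FIET with a reserved interval into a uniformly bounded number of restricted rotations and single-interval flips, arranging the flips into equal-length pairs. I would do this by induction on the combinatorial data of the map — the associated permutation $\pi$ and the length vector — using the reserved interval both to park intermediate pieces and, for each flip that appears, to manufacture an equal-length companion flip; restricted rotations are then discharged by the identity above, equal-length flip pairs by the soft ingredient. This last step is where essentially all the difficulty lies, and the reason the bound cannot follow from a general principle is that an FIET is a piecewise isometry: it never compresses an interval and contains no self-similar copy of itself in a subinterval, so the ``infinite swindle'' that yields uniform perfectness for Thompson's group $V$ or for compactly supported homeomorphisms is simply unavailable, and the number of elementary pieces must be controlled honestly. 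In particular the Sah--Arnoux--Fathi invariant, which is the obstruction to uniform perfectness of $\mathcal G$ (and, absent extra hypotheses, of $\mathcal G_0$), must be absorbed into the count; here the flip $\tau$ does the essential work — it lies outside $\mathcal G$, the invariant does not see it, and $\tau R_a\tau=R_a^{-1}$ lets all rotational content be turned into commutators directly. Keeping every bookkeeping step uniform so that the final tally is at most $6$ is the crux.
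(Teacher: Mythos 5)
Your soft ingredients are sound and match the paper's own (Lemma 3.1 shows $\mathcal I_{(a,b)}$ and restricted rotations are commutators by exactly the mechanisms you describe: a product of two conjugate involutions is a commutator, and a rotation is a product of two symmetries conjugate by a rotation). But the step you yourself label ``the heart'' --- that an FIET with a reserved interval is a product of at most $3$ commutators, i.e.\ of $6$ single-interval flips grouped into equal-length pairs --- is the entire theorem, and you do not prove it; you only describe an intended induction and concede that ``keeping every bookkeeping step uniform so that the final tally is at most $6$ is the crux.'' Worse, the reduction buys nothing by itself: by Remark \ref{rem1}, $\overline{\mathcal G}_J$ is isomorphic to $\overline{\mathcal G}$ for any half-open $J$, so a map supported in a proper subinterval has exactly the same unbounded combinatorial complexity as an arbitrary FIET. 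The only decomposition available a priori (Lemma \ref{invRR}) writes $g\in\mathcal G_m$ as $m-1$ restricted rotations, a count that grows with $m$, and ``parking pieces'' in a reserved interval gives no mechanism for collapsing this to a constant --- as you note, there is no self-similar swindle for piecewise isometries.

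What actually makes the uniform bound possible in the paper is a two-part quantitative mechanism that is absent from your proposal. First, Proposition \ref{Cpf}: by pre- and post-composing $f\in\mathcal G_m$ with \emph{periodic} IETs (which are single commutators by Proposition \ref{th2}) one obtains a map whose support has measure at most $\frac{1}{2^t}$ for \emph{arbitrary} $t$, while the number of breakpoints stays bounded by $5m$ \emph{independently of} $t$; this uses a rational approximation of $f^{-1}$ and finite-order restricted rotations, and is where the real work lives. Second, the iterated Dennis--Vaserstein argument (Lemma \ref{VaserOpti} and Proposition \ref{Vasit}): if $g$ is supported in $[1-\frac{1}{2^t},1)$, a rotated disjoint copy of its support lets one merge commutators pairwise, giving $c_{\overline{\mathcal G}}(g) < \frac{1}{2^t}\, c_{\overline{H}_{2^t}}(g) + 3$. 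Note that this merging trick is the correct way to exploit a reserved interval, but a \emph{single} halving only yields $c \leq \frac{1}{2}(m-1)+\frac{3}{2}$, still depending on $m$; it is only because the support can be made exponentially small at linearly bounded breakpoint cost that the iteration divides the $15m-1$ term by $2^t$ and kills it in the limit, giving $5$ for IETs and $6$ after peeling off the flip part via Lemma \ref{invRR}(1). Without a substitute for Proposition \ref{Cpf}, your outline reduces the problem to an equally hard instance of itself.
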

For the group $\mathcal{G}$ we are not able to decide if  $c(\mathcal{G})$ is finite.
However, in the affirmative case, we give an explicit bound in the following 
\begin{theo}\label{th3} \ 
If  $\mathcal G$ is uniformly perfect then $c(\mathcal G) \leq 5$.
\end{theo}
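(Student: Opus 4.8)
Assume $\mathcal{G}$ is uniformly perfect and set $N=c(\mathcal{G})<\infty$; fix $f\in\mathcal{G}_0=[\mathcal{G},\mathcal{G}]$, for which we want $c(f)\le 5$. The plan is to run the same skeleton as in the proof of Theorem \ref{thG}, namely: reduce $f$ to a bounded product of elements of small support, and then bound the commutator length of a small‑support element. In the flip case the flips supply the extra room needed for the second step; here, having no flips, we substitute the mere finiteness of $N$ for them, via a finite version of the commutator swindle.

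\emph{Step 1 (fragmentation into pieces of small support).} First I would show that every $f\in\mathcal{G}_0$ can be written $f=c_0\,g_1g_2$, where $c_0$ is a single commutator in $\mathcal{G}$ and each $g_i\in\mathcal{G}_0$ is supported on an interval whose length is $\le 1/2$ (and in fact as small as one wishes). This is the orientation‑preserving analogue of the fragmentation lemmas already used for Theorem \ref{thG}: one decomposes $f$ through its first‑return map to a suitable subinterval, or cuts $f$ along a conveniently chosen $f$‑orbit. The one spare commutator $c_0$ is what is needed to absorb the Sah--Arnoux--Fathi invariant when one insists that the two pieces $g_i$ individually lie in $\mathcal{G}_0$ and not merely that their product does.

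\emph{Step 2 (a small‑support element of $\mathcal{G}_0$ is a product of at most two commutators, using the hypothesis).} Let $g\in\mathcal{G}_0$ be supported on an interval $J$ of small length. By Remark \ref{rem1}, $g$ is an element of $\mathcal{G}_J\cong\mathcal{G}$, and after reconciling the abelianizations $g$ lies in $[\mathcal{G}_J,\mathcal{G}_J]$, so the hypothesis gives $g=[a_1,b_1]\cdots[a_N,b_N]$ with all $a_i,b_i$ supported on $J$. The crux is to collapse the a priori uncontrolled number $N$ to $2$: choose pairwise disjoint translates $J=J_1,J_2,\dots,J_N$ of $J$ inside $[0,1)$ (possible because $|J|$ is small) together with an IET $\tau$ carrying $J_i$ to $J_{i+1}$; conjugating the $i$‑th commutator onto $J_i$ by $\tau^{i-1}$ exhibits $P:=\prod_i \tau^{i-1}[a_i,b_i]\tau^{-(i-1)}$ as a single commutator, and a telescoping identity in $\tau$ rewrites $g$ from $P$ at the cost of only one further commutator that "collects" the scattered factors back onto $J$. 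Combining with Step 1, $f=c_0\,g_1g_2$ with $c(c_0)=1$ and $c(g_i)\le 2$, hence $c(f)\le 1+2+2=5$.

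The main obstacle is precisely this Step 2: turning the uncontrolled constant $N=c(\mathcal{G})$ into the absolute constant $2$ for small‑support elements. The classical swindle needs infinitely many disjoint copies of the support, which do not exist on $[0,1)$; the finite analogue works only because $N<\infty$, and making the spreading/collecting identity genuinely close up in the IET category — while keeping every constant tight enough that the final tally is exactly $5$ — is the delicate part. A secondary but necessary point running through both steps is to verify that the auxiliary maps ($\tau$, the half‑interval involutions, the fragmentation pieces) can all be chosen inside $\mathcal{G}$ and that the pieces $g_i$ genuinely lie in $\mathcal{G}_0$, which is where the Sah--Arnoux--Fathi invariant must be tracked carefully.
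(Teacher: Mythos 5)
Your Step 2 is correct and is a genuinely different mechanism from the one in the paper. The paper deduces Theorem \ref{th3} from Theorem \ref{thGm}: it never collapses the uncontrolled number $N=c(\mathcal G)$ of commutators to an absolute constant, but instead places the small-support element inside $H_{2^t}$ and applies the iterated Dennis--Vaserstein halving of Proposition \ref{Vasit}, $c_{\mathcal G}(g')<\frac{1}{2^t}c_{H_{2^t}}(g')+3$, so that the bound $c_{H_{2^t}}(g')\le N$ coming from the hypothesis is divided by $2^t$ and vanishes as $t\to\infty$. Your finite displacement swindle --- spread the $N$ commutators over $N$ disjoint translates of $J$ by powers of a restricted rotation $\tau$, merge the disjointly supported conjugates into one commutator via Properties \ref{PrtyCom}(1), and recollect them with one telescoping commutator in $\tau$ --- reaches the same goal in one step and yields the absolute bound $c_{\mathcal G}(g)\le 2$ for any $g\in[\mathcal G,\mathcal G]$ supported in an interval $J$ with $N\vert J\vert\le 1$. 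The two facts you need, namely that $g\vert_J\in[\mathcal G_J,\mathcal G_J]$ and that $c(\mathcal G_J)=c(\mathcal G)$, are exactly Remark \ref{remSAF} and Remark \ref{rem1}, so this part is a legitimate (and quantitatively sharper) alternative to Proposition \ref{Vasit}.

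The genuine gap is Step 1. The decomposition $f=c_0\,g_1g_2$ with a \emph{single} commutator $c_0$ and two factors of arbitrarily small interval support is asserted, not proved, and it is not clear that a first-return or orbit-cutting argument produces it: $c_0^{-1}f$ would have to have arbitrarily small support, i.e.\ a single commutator would have to coincide with $f$ off an arbitrarily small set, which is precisely the hard part. What the paper actually establishes (Proposition \ref{Cpf} together with Lemma \ref{Lemsup}) is the decomposition $f=p^{-1}\circ(h^{-1}\circ g'\circ h)\circ p'^{-1}$ with $p,p'$ periodic --- hence single commutators by Proposition \ref{th2} --- and $g'$ supported in an interval of measure at most $1/n$; moreover $g'\in[\mathcal G,\mathcal G]$ because $p,p'$ are commutators and $[\mathcal G,\mathcal G]$ is normal. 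Substituting this for your Step 1 repairs the argument completely: two commutators plus one small-support piece of cost at most $2$ gives $c_{\mathcal G}(f)\le 4\le 5$. So your overall strategy is viable and even improves the constant, but the fragmentation step must be replaced by the paper's composition-with-periodic-maps argument rather than the unproved $c_0g_1g_2$ form.
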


In section 6, we will prove stronger results, Theorems \ref{thGm} and \ref{th4}, which only require that commutator lengths are bounded when prescribing the number of discontinuity points or the arithmetic, that is when the elements considered belong to
$\Gamma_{\alpha}:=\{ g \in \mathcal{G} : BP(g) \subset \Delta_{\alpha}\}$, where $\Delta_{\alpha}$ is the abelian subgroup of $\mathbb R$ generated by $p$ real numbers $\alpha_1,\cdots,\alpha_p$ and $1$.

Our proofs are based on an adaptation of a result of Dennis and Vaserstein giving a criterion for uniform perfectness (\cite{Va}). This is explained in Section 4.

\smallskip

It is plain that a simple group $G$ is generated by any subset $S$ which is invariant under conjugation. In particular, $S$ can be the set consisting of commutators, involutions or finite order elements. Group invariants are therefore provided by considering $S$-lengths that is the least number $l_S$ such that any element can be written as product of $l_S$ elements of $S$. These lengths can be simultaneously considered by using the following
\begin{defi}
A group $G$ is \textbf{uniformly simple} if there exists a positive integer $N$ such that for any $f,g \in G\setminus \{id\}$, the element $g$ can be written as a product of at most $N$ conjugates of $f$ or $f^{-1}$.
\end{defi} 

Ulam and Von Neumann (\cite{UvV}) showed that the group of homeomorphisms of $\mathbb S^1$ is uniformly simple. Burago and Ivanov (\cite{BuIv}) obtained implicitely the same conclusion for $PL^+ (\mathbb S^1)$ and $[PL^+ (I),PL^+ (I)]$ (see also \cite{GaGi} Theorem 1.1). The question of the uniform simplicity of $[\mathcal{G},\mathcal{G}]$ and $\overline {\mathcal{G}}$ is formulated in \cite{PaRo}. However, Cornulier communicated us that $[\mathcal{G},\mathcal{G}]$ and $\overline {\mathcal{G}}$ are not uniformly simple. Indeed, if the support of an IET or FIET $f$ has length less than $\frac 1 N$ then any product of $N$ conjugates of $f$ or $f^{-1}$ can not have full support.

\medskip

In a forthcoming paper we will prove the uniform simplicity of $\mathcal A$, the group of affine interval exchange transformations of $I$, i.e. {\footnotesize{bijections $I \rightarrow I$ defined by a finite partition of $I$ into half-open subintervals such that the restriction to each of these intervals is a direct affine map}}. More generally, we will give conditions on groups of piecewise continuous bijective maps on $I$ that ensure uniform simplicity. This study and the work of \cite{GaGi} suggest that most simple transformation groups are uniformly simple. The group $\overline{\mathcal{G}}$ provides an example of a non uniformly simple group with bounded commutator length. As far as we know it is an open problem to determine whether $\overline{\mathcal{G}}$ has bounded involution length. Recently, O. Lacourte (\cite{Lac2}) defined the analogues of $\Gamma_{\alpha}$ in $\overline{\mathcal{G}}$, namely $\overline{\Gamma_{\alpha}}$. He proved that $[\overline{\Gamma_{\alpha}}, \overline{\Gamma_{\alpha}}]$ are simple and it would be relevant to study their uniform perfectness.

\medskip

\noindent \textbf{Acknowledgements.} We thank E. Ghys for communicating us, a long time ago, how  Proposition 1 of \cite{Va} allows to conclude that certain groups of homeomorphisms are uniformly perfect. We thank Y. Cornulier for fruitful discussions. We acknowledge support from the MathAmSud Project GDG 18-MATH-08, the Labex CEMPI (ANR-11-LABX-0007-01), the University of Lille (BQR), the I.F.U.M. and the project ANR Gromeov (ANR-19-CE40-0007). The second author also thanks CNRS for the  d\'el\'egation during the academic year 2019/20.
%-------------------Section 2 Preliminaries
\section{Preliminaries.}
The aim of this section is to fix notations and terminology, to collect a few results and to prove some basic results to be used in the sequel.
\subsection{Restricted rotations and periodic IET}
\begin{defi} \ 

\smallskip

An IET with two continuity intervals is called a {\bf rotation} and it is denoted by $R_a$, where $a$ is the image of $0$.

An IET $g$ whose support, $supp(g)=\{x\in I : g(x)\not=x\}$, is  $J=[a,b)\subset[0,1)$ 
is a {\bf restricted rotation} if the direct homothecy that sends $J$ to $[0,1)$ conjugates $g_{\vert J}$ to a rotation. We denote it by $R_{\alpha,J}$ where $\alpha$ is given by $R_{\alpha,J}(x) =x+\alpha \ (mod \ \vert b-a \vert)$ for $x\in J$.

\smallskip

An element $g$ of $\mathcal G$ [resp. $\overline{\mathcal G}$] is \textbf{periodic} if every $g$-orbit is finite.

\end{defi}

\medskip

By \cite{Ar1} (III p.3), \cite{No} (Lemma 6.5) or \cite{Vo} (Lemma 2.1), any interval exchange transformation is a product of restricted rotations (see also our Lemma \ref{invRR} for a proof). For periodic IET, Novak showed a sharper statement.
\begin{lemm}\label{clcoinG}(\cite{No} Proof of Corollary 5.6) Any periodic element $g$ of $\mathcal G$ is conjugated in $\mathcal G$ to a product of finite order restricted rotations with disjoint supports. In particular, any periodic IET has finite order.
\end{lemm}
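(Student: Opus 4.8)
The plan is to produce, from periodicity, a finite $g$-invariant partition $\mathcal P$ of $[0,1)$ that $g$ \emph{permutes}, and then to read off the restricted-rotation structure one cycle of that permutation at a time. Since $BP(g)$ is finite and every $g$-orbit is finite, the set $E:=\bigcup_{n\in\Z}g^{n}\bigl(BP(g)\cup\{0\}\bigr)$ is a finite union of finite orbits, hence finite; by construction $g(E)=E$ and $BP(g)\cup\{0\}\subseteq E$. Writing $E=\{0=e_{0}<e_{1}<\dots<e_{k}\}$ and $e_{k+1}=1$ gives a partition $\mathcal P=\{\,I_{i}:=[e_{i},e_{i+1})\ :\ 0\le i\le k\,\}$ of $[0,1)$. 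As $E\supseteq BP(g)$, each $I_{i}$ lies inside a continuity interval of $g$, so $g|_{I_{i}}$ is a translation and $g(I_{i})$ is a half-open interval with left endpoint $g(e_{i})\in E$; since the $g(I_{i})$ tile $[0,1)$, their right endpoints lie in $E\cup\{1\}$ too, so each $g(I_{i})$ is a union of consecutive members of $\mathcal P$. The same holds for $g^{-1}$, because $BP(g^{-1})=g(BP(g))\subseteq E$. If some $g(I_{i})$ were a union of at least two members of $\mathcal P$, then $g^{-1}$ applied to one of them would land strictly inside $I_{i}$, contradicting the statement just obtained for $g^{-1}$; hence there is a permutation $\sigma$ of $\{0,\dots,k\}$ with $g(I_{i})=I_{\sigma(i)}$ for every $i$.

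Then I would decompose $\sigma$ into cycles. Along a cycle $(i_{1}\,i_{2}\,\cdots\,i_{p})$ the intervals $I_{i_{1}},\dots,I_{i_{p}}$ all share a common length $\ell$, their union $J$ is $g$-invariant, $g$ cyclically permutes them by translations, and $g^{p}$ restricted to each $I_{i_{t}}$ is a translation of a half-open interval onto itself, hence the identity; thus $g|_{J}$ has finite order $p$. Collecting the non-trivial cycles, write the corresponding supports as pairwise disjoint $J_{1},\dots,J_{r}$, with $|J_{c}|=p_{c}\ell_{c}$ and $\bigcup_{c}J_{c}=supp(g)$. For each $c$, cut an interval $K_{c}$ of length $p_{c}\ell_{c}$ into $p_{c}$ consecutive subintervals of length $\ell_{c}$ and send the $t$-th interval of the cycle to the $t$-th subinterval by the appropriate translation; this defines a bijection $h_{c}\colon J_{c}\to K_{c}$ with $h_{c}\,(g|_{J_{c}})\,h_{c}^{-1}=R_{\ell_{c},K_{c}}$, a restricted rotation of order $p_{c}$ (the successive shifts telescope to the rotation number, a routine check). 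Taking the $K_{c}$ to be disjoint subintervals of $[0,1)$ and completing $h_{1},\dots,h_{r}$ by an arbitrary IET from $\fix(g)$ onto the remaining interval yields $h\in\mathcal G$ with $hgh^{-1}=\prod_{c=1}^{r}R_{\ell_{c},K_{c}}$, a product of finite order restricted rotations with disjoint supports. This product has order $\mathrm{lcm}(p_{1},\dots,p_{r})$, so $g$ itself has finite order.

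The bookkeeping (a half-open interval partitioned into half-open intervals has them consecutive, and the translations along a cycle add up to the rotation) is routine. The one delicate point is the step asserting that $g$ genuinely \emph{permutes} $\mathcal P$ rather than merely carrying its members to unions of members: that is exactly why one must confront both $g$ and $g^{-1}$ with the \emph{same} partition $\mathcal P$, and it is the main --- if modest --- obstacle.
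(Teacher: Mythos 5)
Your proof is correct, and it follows the same underlying scheme as the argument the paper only cites (Novak's, echoed by the Arnoux decomposition recalled in Section 2.3 and by the Appendix): produce a finite $g$-invariant partition that $g$ permutes, identify each cycle of that permutation with a tower of intervals of equal length on which $g^{p}$ is the identity, and conjugate each tower onto a block of consecutive intervals so that $g$ becomes a finite-order restricted rotation there. The one point where your route genuinely improves on the in-paper variants is the construction of the partition: you generate it by the $g$-orbit of $BP(g)\cup\{0\}$, which is finite as soon as every orbit is finite, so the deduction ``periodic $\Rightarrow$ finite order'' is non-circular; by contrast, the partition used in the Appendix is built from the discontinuities of $g,g^{2},\dots,g^{p-1}$ and therefore presupposes $g^{p}=\id$, while Section 2.3 outsources the decomposition to Arnoux's theorem. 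The steps you flag as routine do check out: the tiling argument (disjoint half-open intervals with union $[0,1)$ and left endpoints exactly $E$ must coincide with the members of $\mathcal P$) shows $g$ permutes $\mathcal P$ --- in fact this already gives the permutation directly, without needing the $g^{-1}$ detour; the translations along a cycle telescope to $R_{\ell_c,K_c}$; and an IET matching $Fix(g)$ with the complement of the $K_c$ exists since both are finite unions of half-open intervals of equal total length (choose the $K_c$ adjacent so the complement is an interval, if you want it literally as you wrote it).
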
%This is equivalent to the fact that $g$ has finite order, according to the Arnoux Decomposition Theorem (see Proposition page 20 of [Arn81a]).

\subsection{Basic properties on commutators.} \ 
\begin{defi}Let $G$ be a group. An element $a\in G$ is \textbf{reversible} in $ G$ if there exists $h\in G$ such that $a=h a ^{-1} h ^{-1}$.\end{defi}
\begin{propri} \label{PrtyCom}Let $G$ be a group and let $a, b, a', b'$ and $h$  be elements of $G$.
\begin{enumerate}
\item If $a$ and $b$ commute with both $a'$ and $b'$ then $[a,b][a',b']= [aa',bb']$.%If $\{a,b\}$ and $\{a',b'\}$ are commuting
\item If $a'=h a ^{-1} h ^{-1}$  then $a a'=[a,h]$.
\item If $a$ is reversible in $ G$ then $a^2$ is a commutator.
\item $h [a,b] h ^{-1} = [h a h ^{-1}, h b h ^{-1}]$.
\end{enumerate}
\end{propri}
\noindent \textit{Proof.} 
\begin{quote} \vskip -6truemm 
\begin{enumerate} \item As  $a'$ commutes with $a$ and $b$, we have

\smallskip

\quad $[a,b][a',b']= ab a^{-1} b^{-1} a' b' a'^{-1} b'^{-1} = a a' b a^{-1} b^{-1}   b' a'^{-1}b'^{-1}.$

\smallskip
 
Repeating this process with  $b'$ and then $a'^{-1}$, we get 

\smallskip

\quad $[a,b][a',b'] = a a' b b' a^{-1} a'^{-1} b^{-1} b'^{-1}=a a' b b' a'^{-1} a^{-1} b'^{-1}  b^{-1}= [aa',bb'].$

\smallskip

\item  $a a' = a h a^{-1} h ^{-1} =[a,h]$.
\item $a^2 = a h a^{-1} h ^{-1} =[a,h]$.
\item $ [h a h ^{-1}, h b h ^{-1}]= h a h ^{-1} h b h ^{-1}h a^{-1} h ^{-1} h b^{-1} h ^{-1}= h [a,b] h ^{-1}.$ \hfill $\square$
\end{enumerate} 
\end{quote}
\subsection{Periodic IET are commutators.} \ 

\smallskip

In \cite{GLIET} Theorem 4, the authors proved that any periodic IET is reversible in  $\mathcal G$. We recall this argument briefly. Let $f$ be a periodic IET, by the Arnoux Decomposition Theorem (see Proposition page 20 of [Arn81a]), the interval $[0,1)$ can be written as the union of finitely many $f$-periodic components $M_i$, $i=1,..,n$ of period $p_i$. In particular, $M_i=\sqcup_{k=1}^{p_i} J_k$, where $J_k= f^{k-1}(J_1)$ are half-open intervals and $f$ is continuous on $J_k$.  

Eventually conjugating $f$ by an IET, we can suppose that the $J_k$'s are ordered consecutive intervals so the $M_i$'s are intervals and $\pi=\pi(f\vert_{M_i})=(1,2,\cdots, p_i)$.

We consider the IET $h$, that is defined on each $M_i$ by $h$ is continuous on $J_k$ and $h(J_k)= J_{\tau(k)}$, where $\tau \in \mathcal S_{p_i}$ satisfies $\tau^{-1} \pi \tau (k)=\pi^{-1}$ (such a permutation exists by Proposition 3.4 of \cite{FS}). One has that $h^{-1}\circ f \circ h$ is continuous on $J_k$ and $h^{-1}\circ f \circ h(J_k) =J_{\tau^{-1} \pi \tau (k)}=J_{\pi^{-1}(k)}$. Therefore $h^{-1}\circ f \circ h=f^{-1}$, meaning that $f$ is reversible in $\mathcal G$.

\smallskip

This implies \begin{prop}\label{th2} Any periodic IET is a commutator in $\mathcal G$. \end{prop}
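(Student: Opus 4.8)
The plan is to reduce the statement, via the decomposition of periodic IET into restricted rotations, to the observation that a finite-order restricted rotation has a \emph{periodic} square root, and then to invoke the reversibility of periodic IET recalled above together with Property \ref{PrtyCom}(3). Note that reversibility of $f$ itself only yields that $f^2$ is a commutator; the point is therefore to exhibit $f$ as the square of a periodic element.

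First I would apply Lemma \ref{clcoinG}: up to conjugacy in $\mathcal G$, which by Property \ref{PrtyCom}(4) does not affect being a commutator, we may assume $f=\prod_{i=1}^{n} R_{\alpha_i,J_i}$, a product of finite-order restricted rotations with pairwise disjoint supports $J_i$. Since $R_{\alpha_i,J_i}$ has finite order we have $\alpha_i/|J_i|\in\Q$, so $R_{\alpha_i/2,J_i}$ is again a finite-order restricted rotation supported in $J_i$; and because $R_{\beta,J}\circ R_{\beta,J}=R_{2\beta,J}$ (the maps are translations mod $|J|$ on $J$ and the identity elsewhere), one gets $\left(R_{\alpha_i/2,J_i}\right)^{2}=R_{\alpha_i,J_i}$.

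Next, set $b=\prod_{i=1}^{n} R_{\alpha_i/2,J_i}$. The factors have pairwise disjoint supports, hence commute, so $b$ has finite order, is therefore periodic, and $b^{2}=\prod_{i=1}^{n}\left(R_{\alpha_i/2,J_i}\right)^{2}=\prod_{i=1}^{n} R_{\alpha_i,J_i}=f$. Thus $f$ is the square of a periodic IET. By the argument recalled just above (from \cite{GLIET} Theorem 4), any periodic IET is reversible in $\mathcal G$, so $b$ is reversible, and Property \ref{PrtyCom}(3) gives that $b^{2}=f$ is a commutator in $\mathcal G$.

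I expect no serious obstacle: the only points requiring a line of care are that the square root $R_{\alpha_i/2,J_i}$ is a genuine finite-order element and that disjointness of supports persists when passing to square roots, so that the $R_{\alpha_i/2,J_i}$ still commute; both are immediate. (If one prefers to avoid halving the translation length, the same conclusion follows by realizing the order-$q$ restricted rotation $R_{\alpha_i,J_i}$ as the square of the cyclic shift of $2q$ equal subintervals of $J_i$, i.e.\ of the restricted rotation of $J_i$ by $|J_i|/2q$.)
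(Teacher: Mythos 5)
Your proof is correct and follows essentially the same route as the paper's: reduce via Lemma \ref{clcoinG} to a product of finite-order restricted rotations with disjoint supports, observe that $R_{\alpha,J}=R_{\alpha/2,J}^{2}$ so the element is the square of a periodic IET, and conclude by the reversibility of periodic IET together with Property \ref{PrtyCom}(3). You simply spell out a few details (conjugacy invariance, persistence of finite order and of disjoint supports under halving) that the paper leaves implicit.
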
 
Indeed, we claim that any periodic IET can be written as the square of another periodic element. To see this, it is enough to consider rotations, by Lemma \ref{clcoinG}. This is obvious since $R_{\alpha}=R_{\frac{\alpha}{2}}^2$, so any periodic IET is the square of a reversible IET. Finally, the result follows from Properties \ref{PrtyCom} (3).

%-------------------Section 4 :  (\bar G) borné et c_m(G)
\section{Generalities on commutators in $\overline{\mathcal G}$ and ${\mathcal G}$.} 
\subsection{Commutators in $\overline{\mathcal{G}}$.}
\subsubsection{Fundamental examples.} {Let $a,b$ satisfy that $0\leq  a < b\leq 1$.}

\smallskip

We denote by  $\mathcal I_{[a,b]}$ the symmetry of  $[a,b]$, i.e. the FIET defined by: 
$$\mathcal I_{[a,b]}(x)=x \ \text{ if } \ \ x\notin [a,b] \ \ \ \ \text{  and } \ \ \  \mathcal I_{[a,b]} (x)=a+b-x \ \text{ if } \  \ x\in [a,b].$$

Similarly, we denote by $\mathcal I_{(a,b)}$  the symmetry of $(a,b)$, i.e. the FIET defined by: 
$$\mathcal I_{(a,b)}(x)=x \ \text{ if } \ \ x\notin (a,b) \ \ \ \ \text{  and } \ \ \  \mathcal I_{(a,b)} (x)=a+b-x \ \text{ if } \  \ x\in (a,b).$$

\medskip

{\unitlength=0,35 mm
\hskip 0 truecm  \begin{picture}(95,95)
\put(0,10){\textcolor{gray}{\line(1,0){95}}}
\put(10,0){\textcolor{gray}{\line(0,1){95}}}
\put (20, 85){$ \mathcal I_{[a,b]}$}
\put(44, 8){$\scriptstyle \vert$}
\put(38,0){$\scriptstyle a$}
\put(69, 8){$\scriptstyle \vert$}
\put(72,0){$\scriptstyle b$}
\put(44,68){$\scriptscriptstyle\bullet$}
\put(68,44){$\scriptscriptstyle\bullet$}
% L'homéo
\put(10,10){\line(1,1){35}}
\put(45,70){\line(1,-1){25}}
\put(70,70){\line(1,1){25}}
\end{picture}}
{\unitlength=0,35 mm
\hskip 1.5 truecm  \begin{picture}(95,95)
\put(0,10){\textcolor{gray}{\line(1,0){95}}}
\put(10,0){\textcolor{gray}{\line(0,1){95}}}
\put (20, 85){$ \mathcal I_{(a,b)}$}
\put(44, 8){$\scriptstyle \vert$}
\put(38,0){$\scriptstyle a$}
\put(69, 8){$\scriptstyle \vert$}
\put(72,0){$\scriptstyle b$}
\put(43,43){$\scriptscriptstyle\bullet$}
\put(68,68){$\scriptscriptstyle\bullet$}
% L'homéo
\put(10,10){\line(1,1){35}}
\put(45,70){\line(1,-1){25}}
\put(70,70){\line(1,1){25}}
\end{picture}}
{\unitlength=0,35 mm
\hskip 2 truecm  
\begin{picture}(95,95)
\put(0,10){\textcolor{gray}{\line(1,0){95}}}
\put(10,0){\textcolor{gray}{\line(0,1){95}}}
\put(20,85){$S_{\theta, [a,b)}$}

\put(44, 8){$\scriptstyle \vert$}
\put(38,0){$\scriptstyle a$}
\put(54, 8){$\scriptstyle \vert$}
\put(52,0){$\scriptstyle \theta$}
\put(69, 8){$\scriptstyle \vert$}
\put(72,0){$\scriptstyle b$}
\put(43,54){$\scriptscriptstyle\bullet$}
\put(53,44){$\scriptscriptstyle\bullet$}
\put(69,69){$\scriptscriptstyle\bullet$}
 %L'homéo
\put(10,10){\line(1,1){35}}
\put(55,45){\line(-1,1){11}}
\put(70,55){\line(-1,1){15}}
\put(70,70){\line(1,1){25}}
\end{picture}}

\medskip
Clearly, $\mathcal I_{[a,b]}$ and $\mathcal I_{(a,b)}$ are involutions and they represent the same element of $\overline{\mathcal G}$. Therefore, given  $J$ a subinterval of $I$, we define $\mathcal I_{J}$ to be the element of $\overline{\mathcal G}$ represented by $\mathcal I_{\bar J}$.

\smallskip

Let $\theta \in [a,b)$, we define another involution $S_{\theta, [a,b)}$ on $[a,b)$ by
$$S_{\theta, [a,b)}= \mathcal I_{[a,\theta]} \circ \mathcal I_{(\theta,b)}.$$

In particular, $S_{\theta, [0,1)}= \theta -x \ \ (mod \ 1)$ and it is denoted by $S_\theta$.

\begin{proper} \label{Sym} \ 

\begin{enumerate}
\item $S_\theta \circ S_{ \theta'} =R_{\theta-\theta'}$.
\item $R_{\alpha} \circ S_{\theta} \circ R_{\alpha}^{-1} = S_{\theta + 2\alpha}$.
\end{enumerate}
\end{proper}

\begin{lemm} \label{IRCom}(\cite{Ar1} III p.3) \ 

The maps $\mathcal I_{(a,b)}$ and $R_{\alpha,[a,b)}$ are commutators in $\overline{\mathcal G}_{[a,b)}$ and then in $\overline{\mathcal G}$.
\end{lemm}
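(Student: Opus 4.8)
The plan is to produce, for each of the two maps, an explicit realization as a single commutator in $\overline{\mathcal G}_{[a,b)}$, and then invoke Remark \ref{rem1} to transfer the statement to $\overline{\mathcal G}$. By conjugating with the direct homothecy sending $[a,b)$ to $[0,1)$ it suffices to treat the case $[a,b)=[0,1)$, i.e. to show that $\mathcal I_{(0,1)}$ and $R_\alpha$ are commutators in $\overline{\mathcal G}$. For $\mathcal I_{(0,1)}$, note it is represented by the symmetry $S_0 : x \mapsto -x \pmod 1$, which is an involution. The natural move is to use Properties \ref{PrtyCom}(3): if $S_0$ is reversible in $\overline{\mathcal G}$ then $S_0^2 = \mathrm{id}$ is a commutator, but that is useless since we want $S_0$ itself. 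Instead I would directly exhibit $S_0$ as $[f,g]$; the cleanest choice uses Property \ref{Sym}: writing $R_\beta S_0 R_\beta^{-1} = S_{2\beta}$ and $S_{2\beta} S_0 = R_{2\beta}$, one gets $[R_\beta, S_0] = R_\beta S_0 R_\beta^{-1} S_0^{-1} = S_{2\beta} S_0 = R_{2\beta}$. This already shows every rotation $R_\alpha$ is a commutator (take $\beta = \alpha/2$), settling the $R_{\alpha,[a,b)}$ part after rescaling.

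For $\mathcal I_{(0,1)} = S_0$ I would look for an element $g$ that conjugates $S_0$ to $S_0^{-1} = S_0$ composed with something whose product with $S_0$ is $S_0$ again — more precisely, I want $h$ with $h S_0 h^{-1} = S_0 \cdot w$ where $w$ is chosen so that the commutator $[S_0,h] = S_0 (h S_0^{-1} h^{-1}) = S_0 \cdot (h S_0 h^{-1})$ equals $S_0$; equivalently I need $h S_0 h^{-1} = \mathrm{id}$, impossible. So the honest route is: pick two FIET $f,g$ and compute $[f,g]$ directly, choosing $f$ and $g$ to be restricted symmetries on complementary halves. A concrete candidate: let $f = \mathcal I_{(0,1/2)}$ and let $g$ be the rotation $R_{1/2}$ swapping $[0,1/2)$ and $[1/2,1)$; then $g f g^{-1} = \mathcal I_{(1/2,1)}$, $f$ and $gfg^{-1}$ have disjoint supports, so $[f,g] = f \cdot (gf^{-1}g^{-1}) = \mathcal I_{(0,1/2)} \mathcal I_{(1/2,1)}$, which is not $S_0$ but is a fixed-point-free involution; one then adjusts $g$ to a rotation by an irrational-type translation or replaces the halves by the intervals exchanged by $S_0$ itself. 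The key computation to get exactly $S_0$: observe $S_0$ exchanges $[0,1/2)$ with $(1/2,1]$ reversing orientation; it equals $\mathcal I_{[0,1/2]}$ composed with the rotation-like flip — I would verify that $S_0 = [\mathcal I_{(0,1/2)}, \rho]$ for a suitable $\rho\in\overline{\mathcal G}$ by checking the action on the two halves separately.

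The main obstacle I anticipate is bookkeeping: $\overline{\mathcal G}$ is a quotient by the finitely-supported subgroup, so equalities only need to hold off a finite set, which simplifies endpoint issues, but one must track flip-vectors carefully to be sure the products of symmetries and rotations actually land on $S_0$ (resp.\ $R_\alpha$) and not on some other FIET with the same support. I would organize the verification as: (i) reduce to $[0,1)$ by homothety (Remark \ref{rem1}); (ii) handle $R_\alpha = [R_{\alpha/2}, S_0]$ using Property \ref{Sym}(2); (iii) for $\mathcal I_{(0,1)}$, write $S_0$ as a product of two restricted symmetries on complementary intervals that are swapped by a conjugating element, then use Property \ref{PrtyCom}(2) (with $a' = hah^{-1}$, $aa' = [a,h]$) taking $a$ to be one restricted symmetry and $h$ the swap, verifying on each half that the resulting commutator agrees with $S_0$ modulo finitely many points; (iv) conclude that both maps, being commutators in $\overline{\mathcal G}$, are in particular commutators in $\overline{\mathcal G}_{[a,b)}$ via the support identification in Remark \ref{rem1}, and undo the homothety. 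Steps (ii) and (iii) are the substantive ones and reduce to finite, explicit case checks on two subintervals.
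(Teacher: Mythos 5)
Your overall strategy coincides with the paper's: reduce to $[0,1)$ by the homothecy of Remark \ref{rem1}, obtain $R_\alpha$ from Property \ref{Sym} as a product of two symmetries conjugate by a rotation, and obtain the full symmetry as a product of two involutions that are conjugate to one another, via Properties \ref{PrtyCom}(2). Your step (ii) is correct and complete: $[R_{\alpha/2},S_0]=S_\alpha S_0=R_\alpha$.

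Step (iii), however, is not carried through, and the one concrete repair you propose cannot work. Your candidate $[\mathcal I_{(0,1/2)},R_{1/2}]=\mathcal I_{(0,1/2)}\mathcal I_{(1/2,1)}$ equals $S_{1/2}$, not $S_0$ -- you notice this -- but the suggested fix $S_0=[\mathcal I_{(0,1/2)},\rho]$ is impossible for every $\rho$: it would force $\rho\,\mathcal I_{(0,1/2)}\,\rho^{-1}=\mathcal I_{(0,1/2)}S_0$, and the left side is an involution while $\mathcal I_{(0,1/2)}S_0$ is not (its square sends $x\in(0,1/2)$ to $\tfrac12-x$). Likewise ``the intervals exchanged by $S_0$'' are exactly the two halves you already tried, so that adjustment goes nowhere. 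The missing one-line step is either: (a) note that $S_0=R_{-1/4}\,S_{1/2}\,R_{1/4}$ by Property \ref{Sym}(2), and conjugates of commutators are commutators by Property \ref{PrtyCom}(4); or (b) as the paper does, take $f_1=\mathcal I_{[1/4,3/4]}$ and $f_2=R_{1/2}f_1R_{1/2}^{-1}$ (an involution supported on $[0,1/4]\cup[3/4,1]$, not on a subinterval), check directly that $f_1\circ f_2=\mathcal I_{(0,1)}$, and apply Property \ref{PrtyCom}(2). Note also that the interval you split at must be \emph{invariant} under $S_0$, not exchanged by it. Finally, your step (iv) is phrased backwards: being a commutator in $\overline{\mathcal G}$ does not a priori make a map a commutator in the subgroup $\overline{\mathcal G}_{[a,b)}$; the correct order is to prove the statement in $\overline{\mathcal G}_{[0,1)}$, transport it to $\overline{\mathcal G}_{[a,b)}$ by the homothecy, and then observe that a commutator in a subgroup is a commutator in the ambient group.
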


\begin{proof} Taking restrictions and conjugating by a homothecy as in Remark \ref{rem1}, it is sufficient to prove that $\mathcal I_{(0,1)}$ et $R_{\alpha,[0,1)}$ are commutators.

It is easy to see that $\mathcal I_{(0,1)}$ is the product of the involutions $f_1$ and $f_2$ described as below: 

{\unitlength=0,3 mm
 \hskip 2.5 truecm \begin{picture}(120,120)
% L'homéo 1
\put (20,98){$f_1$}
\put(0,10){\textcolor{gray}{\line(1,0){100}}} %\textcolor{lightgray}
\put(10,0){\textcolor{gray}{\line(0,1){100}}}

\put(30, 8){$\scriptstyle \vert$}
\put(26,0){$\scriptstyle \frac{1}{4}$}
\put(70, 8){$\scriptstyle \vert$}
\put(71,0){$\scriptstyle \frac{3}{4}$}

% L'homéo
\put(10,10){\line(1,1){20}}
\put(30,70){\line(1,-1){40}}
\put(70,70){\line(1,1){20}}
\end{picture} \hskip 3 truecm
% L'homéo 2
\begin{picture}(120,120) 
\put (20,98){$ f_2$}
\put(0,10){\textcolor{gray}{\line(1,0){100}}}
\put(10,0){\textcolor{gray}{\line(0,1){100}}}

\put(30, 8){$\scriptstyle \vert$}
\put(26,0){$\scriptstyle \frac{1}{4}$}
\put(70, 8){$\scriptstyle \vert$}
\put(71,0){$\scriptstyle \frac{3}{4}$}

% L'homéo
\put(10,90){\line(1,-1){20}}
\put(30,30){\line(1,1){40}}
\put(70,30){\line(1,-1){20}}
\end{picture}}

\smallskip

As $f_2$ is conjugated to $f_1=f_1^{-1}$ by  $R_{\frac{1}{2}}$, Item (2) of Properties \ref{PrtyCom} implies that the map $\mathcal I_{(0,1)}$ is a commutator.

\medskip

According to Property \ref{Sym}, any rotation is the product of $2$ symmetries that are conjugated by a rotation; thus  $R_{\alpha,[0,1)}$ is a commutator. \end{proof}

\subsubsection{Decomposition in involutions and restricted rotations.} 
\begin{lemm} \label{invRR} (\cite{Ar1} III p.3.) 
\begin{enumerate}
\item Any $f\in \overline{\mathcal G}$ can be written as the product of an element of ${\mathcal G}$ and an involution that is a commutator.
\item Any $g\in \mathcal G_m$ can be written as the product of $m-1$ restricted rotations.
\end{enumerate}
\end{lemm}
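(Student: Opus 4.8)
The plan is to prove the two items separately, in order, reducing everything to facts already available in the excerpt. For item (1), let $f\in\overline{\mathcal G}$ with continuity intervals $I_1,\dots,I_m$ and flip-vector $U(f)=(u_1,\dots,u_m)$. The idea is to ``straighten'' each flipped interval by composing with a single involution. Let $\iota$ be the involution obtained as a disjoint union of the symmetries $\mathcal I_{I_i}$ over exactly those indices $i$ with $u_i=-1$ (interpreting each $\mathcal I_{I_i}$ as the element of $\overline{\mathcal G}$ supported on $\bar I_i$, as defined just before Lemma~\ref{IRCom}). I would first check that $\iota$ is indeed an involution in $\overline{\mathcal G}$ — it is a disjoint union of commuting involutions — and then verify that $f\circ\iota$ (or $\iota\circ f$, whichever is convenient) is \emph{direct} on every continuity interval, hence lies in $\mathcal G$. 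Writing $g:=f\circ\iota^{-1}=f\circ\iota\in\mathcal G$ gives $f=g\circ\iota$. It remains to see that $\iota$ is a commutator: since $\iota$ is a finite disjoint union of restricted symmetries $\mathcal I_{I_i}$ (each of which is a commutator in the corresponding $\overline{\mathcal G}_{\bar I_i}$ by Lemma~\ref{IRCom}), and these have disjoint supports, Property~\ref{PrtyCom}(1) lets me combine them into a single commutator. A small care point: the number of flipped intervals could be odd, but that does not matter here because we are not tracking the sign of a determinant — disjointness of supports makes Property~\ref{PrtyCom}(1) apply regardless of parity, and $\mathcal I_{I_i}$ is individually a commutator.

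For item (2), let $g\in\mathcal G_m$, so $g$ is an IET with at most $m$ discontinuity points, equivalently at most $m$ continuity intervals $I_1,\dots,I_m$ (ordered from left to right) permuted by $\pi=\pi(g)$. This is the classical ``sorting network'' argument. I would proceed by induction on $m$: write $\pi$ as a product of at most $m-1$ transpositions of \emph{adjacent} positions — more precisely, realize $g$ by successively moving the interval that should end up leftmost into place using a restricted rotation that cyclically permutes an initial segment of the current interval arrangement, then recurse on the remaining $m-1$ intervals. Concretely, there is a restricted rotation $R_1$, supported on the union of the first $k_1$ current intervals for an appropriate $k_1$, such that $R_1^{-1}\circ g$ has one fewer ``active'' interval, i.e. agrees with the identity on a leftmost subinterval and is an IET with at most $m-1$ continuity intervals on the complement; peeling off $m-1$ such restricted rotations exhausts $g$. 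Equivalently and perhaps more cleanly: any cyclic permutation on $k$ consecutive intervals of equal combinatorial type is realized by one restricted rotation, and any permutation in $\mathcal S_m$ is a product of at most $m-1$ cycles arranged on nested/consecutive blocks — but one must be careful that the interval \emph{lengths} are compatible at each stage, which is why the inductive ``peel off the leftmost image'' formulation is safer than naively factoring $\pi$.

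The main obstacle I anticipate is purely bookkeeping in item (2): ensuring that at each step the restricted rotation one uses is genuinely an IET (right-continuous, bijective, built from the \emph{current} interval lengths) and that after composing, the residual map really does have strictly fewer continuity intervals rather than merely a simpler permutation. The clean way around this is to induct on $m$ and, at each stage, use a restricted rotation $R$ supported on $[0,c)$ (where $c$ is chosen so that $R^{-1}g$ fixes the first continuity interval $I_1$ of $g$ pointwise): then $R^{-1}g$ restricted to $[\,|I_1|,1)$ is an IET with at most $m-1$ continuity intervals, and the inductive hypothesis finishes it, giving $g=R\cdot(\text{product of }m-2\text{ restricted rotations})$, i.e. $m-1$ factors. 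Item (1) should be essentially immediate once the involution $\iota$ is written down correctly; the only subtlety there is the identification of $\mathcal I_{I}$ as an element of $\overline{\mathcal G}$ (boundary points are irrelevant modulo the equivalence relation), which the excerpt has already set up.
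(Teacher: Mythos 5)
Your proposal is correct and follows essentially the same route as the paper: for item (1) you compose $f$ with the product of the symmetries $\mathcal I_{I_i}$ over the flipped indices and invoke Lemma~\ref{IRCom} together with Property~\ref{PrtyCom}(1), and for item (2) you peel off the leftmost continuity interval with one restricted rotation and induct, exactly as in the paper's construction of $R_1,\dots,R_{m-1}$. No substantive difference.
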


\begin{proof} (1) Let $f\in \overline{\mathcal G}$, we denote by $I_i$ the continuity intervals of $f$ and by  $U=(u_i)$ its flip-vector. It is easy to check that $f \circ \prod_{\{i \ \vert \  u_i=-1\}} \mathcal I_{I_{i}} $ belongs to ${\mathcal G}$. Moreover the $\mathcal I_{I_{i}}$'s have disjoint supports, so they commute and then $\prod_i \mathcal I_{I_{i}}$ is an involution and a commutator by Lemma \ref{IRCom} and Properties \ref{PrtyCom}.

\medskip

\qquad (2) For clarity, given $J=[a,b)$ and $K=[b,c)$ two consecutive half-open intervals, we denote by $R_{J,K}$ the restricted rotation of support $J\sqcup K$ whose interior discontinuity point is $b$.

Let $g\in \mathcal G_{m,\pi}$ with continuity intervals $I_1,\cdots I_m$ and let $g(I_i)=J_{\pi(i)}$. We consider $R_1= R_{J,K}$, where $J=J_1 \cup \cdots \cup J_{\pi(1)-1}$ and $K=J_{\pi(1)}$. One directly has that $R_1\circ g \vert _{I_1}=Id$ and $\#BP(g_1)\leq m-1$, where $g_1 = R_1\circ g \vert _{I_2 \cup \cdots \cup I_m}$.

Starting with $g_1$, we define similarly $R_2$ and we get that $R_2\circ g_1 \vert _{I_2}=Id$ and $\#BP(g_2)\leq m-2$, where $g_2 = R_2\circ g_1 \vert _{I_3 \cup \cdots \cup I_m}$.

\smallskip

Repeating the previous argument $m-1$ times leads to $\#BP(g_{m-1})\leq 1$ so $g_{m-1}=Id$.

\smallskip

Extending the restricted rotations $R_i$ to $[0,1[$ by the identity map, we conclude that $$R_{m-1} \circ \cdots \circ R_1 \circ g = Id.$$ \vskip -6truemm \end{proof}

A direct consequence of Lemmas \ref{invRR} and \ref{IRCom} is

\begin{prop} \label{perf} (\cite{Ar1} III \S 1.4)

The group $\overline{\mathcal G}$ is perfect and any $g\in\mathcal G_m$ is the product of $m-1$ commutators in $\overline{\mathcal G}$.
\end{prop}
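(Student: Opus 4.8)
The plan is to deduce Proposition \ref{perf} directly from the two structural lemmas just proved. First I would unwind the definitions: to say $\overline{\mathcal G}$ is perfect means every $f\in\overline{\mathcal G}$ is a product of commutators, so it suffices to express an arbitrary $f$ as such a product. The natural route is the factorisation of Lemma \ref{invRR}(1): write $f = g\circ \iota$ where $g\in\mathcal G$ (here I should be careful about left/right composition — $f\circ\prod_{u_i=-1}\mathcal I_{I_i}\in\mathcal G$, so $g := f\circ\iota$ with $\iota=\prod_{u_i=-1}\mathcal I_{I_i}$, hence $f = g\circ\iota^{-1}=g\circ\iota$ since $\iota$ is an involution) and $\iota$ is both an involution and a commutator in $\overline{\mathcal G}$. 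So the only remaining task is to write the honest IET $g$ as a product of commutators.

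For that, apply Lemma \ref{invRR}(2): if $g\in\mathcal G_m$ then $g = R_1^{-1}\circ\cdots\circ R_{m-1}^{-1}$, a product of $m-1$ restricted rotations (reading off the identity $R_{m-1}\circ\cdots\circ R_1\circ g = \mathrm{Id}$). By Lemma \ref{IRCom}, each restricted rotation $R_{\alpha,[a,b)}$ is a commutator in $\overline{\mathcal G}$, and the inverse of a commutator is a commutator (indeed $[a,b]^{-1}=[b,a]$), so $g$ is a product of $m-1$ commutators in $\overline{\mathcal G}$. Combined with the previous paragraph, $f = g\circ\iota$ is a product of $(m-1)+1 = m$ commutators; in any case $f$ is a product of commutators, so $\overline{\mathcal G}=[\overline{\mathcal G},\overline{\mathcal G}]$. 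And for the sharper count: any $g\in\mathcal G_m$ is, by the above, a product of $m-1$ commutators in $\overline{\mathcal G}$, which is exactly the second assertion.

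There is essentially no obstacle here — this is a bookkeeping argument assembling Lemmas \ref{invRR} and \ref{IRCom}. The one point requiring a sliver of care is the claim that the inverse of (and a finite product of) commutators is again expressible as a product of commutators with control on the number of factors; this is immediate from $[a,b]^{-1}=[b,a]$, so the commutator count of $g\in\mathcal G_m$ is bounded by the number $m-1$ of restricted-rotation factors, with no inflation. I would also remark that the bound $m-1$ for $\mathcal G_m$ is what later gets improved in the main theorems via the Dennis–Vaserstein criterion; here it is simply the crude bound coming from the greedy elimination of continuity intervals in the proof of Lemma \ref{invRR}(2).
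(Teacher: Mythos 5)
Your proof is correct and follows exactly the route the paper intends: it presents Proposition \ref{perf} as ``a direct consequence of Lemmas \ref{invRR} and \ref{IRCom}'', and your assembly of the factorisation $f=g\circ\iota$ with $g\in\mathcal G_m$ a product of $m-1$ restricted rotations, each a commutator (noting $[a,b]^{-1}=[b,a]$ to handle inverses), is precisely that bookkeeping made explicit.
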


\subsection{Commutators in $\mathcal{G}$.} \ 

In the introduction, we have indicated a few similarities between the groups ${\mathcal G}$ and $PL^+(I)$.
In particular, the simplicity of their derivative subgroups relies on a result of Epstein (see  \textsc{1.1.Theorem} in \cite{Ep}). In the context of finding bounds for the commutator length, a substantial difference between these two groups is that an element $f$ of $[PL^+(I),PL^+(I)]$ is a map whose support $J$ satisfies  $\bar J \subset (0,1)$ and $f$ can not be written as a product of commutators of maps with support in $J$. This contrasts with

\begin{rema} \label{remSAF} Let $J$ be a half-open subinterval of $I$.

If $g\in\mathcal G$ has support in $J$ then $g\in [\mathcal G,\mathcal G]$ if and only if $g\vert_J\in [\mathcal G_J,\mathcal G_J]$. \end{rema}

Indeed, according to Theorem 1.1 of \cite{Vo}, there is a morphism $SAF_J \ : \mathcal G_J\rightarrow \mathbb R  \otimes_{\mathbb Q} \mathbb R$  such that $[\mathcal G_J,\mathcal G_J] = SAF_J^{-1}(0)$. More precisely for $f\in \mathcal G_J$, $ SAF_J(f)=\sum  \lambda_k \otimes \delta_k,$ where the vectors $(\lambda_k)$, $(\delta_k)$ encode the lengths of exchanged intervals and  the corresponding translation constants respectively.

Let $g\in \mathcal G$ with support in $J$. From the previous definition, it is easy to check that $SAF_I(g) = SAF_J (g\vert_J)$. Therefore $g\in [\mathcal G,\mathcal G]$ if and only if $g\vert_J\in [\mathcal G_J,\mathcal G_J]$.
%This implies that the properties of being a product of commutators with supports in $I$ or $J$ are equivalent. 
\section{The adapted Dennis and Vaserstein argument.} 
In this section, we first we recall Proposition 1(c) of Dennis and Vaserstein (\cite{Va}).
\subsection{The original criterion.} 
\begin{defi} Two subsets $S_1$ and $S_2$ of a group $G$ are \textbf{commuting} if any $a\in S_1$ commutes with any  $a'\in S_2$. \end{defi}

\noindent \textbf{Dennis and Vaserstein's criterion.} \textit{If a group $G$ contains two commuting subgroups $H_1$ and $H_2$ such that for each finite subset $S$ of $G$ there are elements $g_i\in G$, $i=1,2$, such that $g\sb i\sp{-1}Sg\sb i\le H\sb i$ for $i=1,2$, then  $c(G)\le 3$.}

\smallskip

\noindent  As an illustrating example indicated by Ghys, the group $[PL^+(I),PL^+(I)]$ consists in all $g$ of $PL^+(I)$ such that $g'(0)=g'(1)=1$. Thus, for any finite collection $\{g_i\}$  in $[PL^+(I),PL^+(I)]$ there exist $0<a<b<1$ such that $(a,b)$ contains the support of all the $g_i$. The required groups $H_1$ and $H_2$ are obtained as groups of maps with disjoint supports by setting $H_1=\langle g_i \rangle$ and $H_2=h \langle g_i \rangle h^{-1} $ where $h$ is an element of $[PL^+(I),PL^+(I)]$ that carries $(a,b)$ into $(\frac{a}{2},a)$.

\smallskip

Unfortunately, this argument doesn't apply immediately in $[\mathcal G,\mathcal G]$ and $\overline{\mathcal G}$. This is essentially due to  the facts that both  groups contain maps with full support and that if the length of the support of $g\in \overline{\mathcal G}$ exceeds $\frac{1}{2}$ then it is impossible to find a conjugate of $g$ in $\overline{\mathcal G}$ with a disjoint support. To avoid these difficulties, we will compose with suitable periodic maps to obtain an IET with arbitrary small support (see Propositions 5.1 and 6.1) and then we will apply the following iterated version of Dennis and Vaserstein's criterion.

\subsection{The iterated version.}

Let $n\in\mathbb N^*$, we denote by $H_n$ [resp. $\overline{H}_n$] the subgroup of $\mathcal G$ [resp. $\overline{\mathcal G}$]  consisting of elements whose support is included in $[ 1-\frac{1}{n} ,1)$. 
%In particular,  $H_2$ [resp. $\overline{H}_2$] consists of elements whose support is included in $[\frac{1}{2} ,1)$.
By Remark \ref{rem1}, $H_n$ [resp. $\overline{H}_n$] is identified with $\mathcal G_{[ 1-\frac{1}{n} ,1)}$ [resp. $\overline{\mathcal G}_{[ 1-\frac{1}{n} ,1)}$]. Moreover, Remark \ref{rem1} and Proposition \ref{perf} imply that 
$\overline{H}_n$ is perfect and Remark \ref{remSAF} leads to $H_n\cap[\mathcal G,\mathcal G]=[H_n,H_n]$.

\begin{lemm}\label{VaserOpti} 
$\displaystyle (a) \ \ \  {\text{ If }} g\in H_2\cap[\mathcal G,\mathcal G]=[H_2,H_2] \ {\text{ then  }}
\ \  c_{\mathcal G}(g) \leq  \frac {1}{2}c_{H_2}( g)  + \frac {3} {2}.$

\hskip 2.2truecm $\displaystyle (b) \ \ \  {\text{ If  }} g\in \overline{H_2} \  {\text{ then  }}  \ \ c_{\overline{\mathcal G}}(g) \leq  \frac {1}{2}c_{\overline{H_2}}( g)  + \frac {3} {2}.$
\end{lemm}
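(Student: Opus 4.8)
The plan is to apply the iterated Dennis--Vaserstein mechanism, using the fact that $H_2$ (resp. $\overline{H_2}$) is the group of elements supported in $[\tfrac12,1)$, and that this group can in turn be split into two commuting copies sitting in $[\tfrac12,\tfrac34)$ and $[\tfrac34,1)$. First I would treat part (a). Take $g\in[H_2,H_2]$ and write it as a product of $c:=c_{H_2}(g)$ commutators $[a_k,b_k]$ with $a_k,b_k\in H_2$. The key observation is that $H_2$ is conjugate inside $\mathcal G$ to $\mathcal G_{[1/2,1)}$, hence is itself perfect on its own support in the sense of Remark~\ref{remSAF}/Proposition~\ref{perf}; more importantly, inside $\mathcal G$ there is an IET $h$ (a restricted rotation, or an explicit piecewise translation) carrying $[\tfrac12,\tfrac34)$ onto $[0,\tfrac14)$, so that $h H_{[1/2,3/4)} h^{-1}$ has support in $[0,\tfrac14)$, disjoint from $[\tfrac34,1)$. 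The goal is to pair up the $c$ commutators two at a time, and for each pair produce a single commutator in $\mathcal G$, using Properties~\ref{PrtyCom}(1): if $[a,b]$ and $[a',b']$ have supports in disjoint intervals then $[a,b][a',b']=[aa',bb']$.

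Concretely, I would proceed as follows. Using a periodic conjugacy we may assume (at the cost of one extra commutator, absorbed into the $+\tfrac32$) that the first half of the $[a_k,b_k]$'s is supported in $[\tfrac12,\tfrac34)$ and the second half in $[\tfrac34,1)$; actually the cleaner route is to conjugate: pick $u\in\mathcal G$ supported in $[\tfrac12,1)$ mapping $[\tfrac12,\tfrac34)$ to a copy of the needed subinterval, and replace $g$ by $g$ itself but rearrange so that the $\lceil c/2\rceil$ commutators can be conjugated (by a single element $g_1\in\mathcal G$) into $H_{[0,1/4)}$ and the remaining $\lfloor c/2\rfloor$ (by a single $g_2\in\mathcal G$) into $H_{[1/4,1/2)}$ — here one uses that any finite family of IET supported in $[\tfrac12,1)$, after composing with a suitable periodic (finite order, hence commutator by Proposition~\ref{th2}) IET, acquires arbitrarily small support, exactly the trick flagged after the statement of the original criterion. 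Then $H_1:=\langle g_1(\cdots)g_1^{-1}\rangle\subset H_{[0,1/4)}$ and $H_2':=\langle g_2(\cdots)g_2^{-1}\rangle\subset H_{[1/4,1/2)}$ are commuting subgroups, the product of the conjugated commutators in $H_1$ collapses to a single commutator by \ref{PrtyCom}(1), likewise for $H_2'$, giving $g = (\text{conjugating junk}) \cdot [\text{comm}]\cdot[\text{comm}] \cdot(\text{conjugating junk})^{-1}$, and conjugation of a commutator is a commutator by \ref{PrtyCom}(4). Counting: $\lceil c/2\rceil$ commutators become $1$, $\lfloor c/2\rfloor$ become $1$, and the periodic correction plus the bookkeeping contributes the constant, yielding $c_{\mathcal G}(g)\le \tfrac12 c + \tfrac32$ after checking parity cases.

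For part (b), the argument is formally identical but easier: $\overline{H_2}$ is perfect by Remark~\ref{rem1} and Proposition~\ref{perf}, so $g\in\overline{H_2}$ is a product of $c_{\overline{H_2}}(g)$ commutators in $\overline{H_2}$, and the same splitting into commuting subgroups supported in $[0,\tfrac14)$ and $[\tfrac14,\tfrac12)$ works verbatim inside $\overline{\mathcal G}$, since the periodic IET used to shrink supports and the conjugators all live in $\mathcal G\subset\overline{\mathcal G}$, and Lemma~\ref{IRCom} guarantees the auxiliary involutions/rotations are commutators in $\overline{\mathcal G}$. The main obstacle I anticipate is the careful bookkeeping of the constant $\tfrac32$: one must make sure that (i) the periodic map used to create small support is itself a commutator (in $\mathcal G$, Proposition~\ref{th2}; in $\overline{\mathcal G}$, Lemma~\ref{IRCom}) and is "spent" only once, and (ii) the two parity cases $c$ even / $c$ odd both give $\lceil c/2\rceil + \lfloor c/2\rfloor$-type counts that, together with that single extra commutator and the commutator absorbing the conjugation, stay within $\tfrac12 c + \tfrac32$; the odd case is the tight one and is where I expect the factor $\tfrac32$ rather than $1$ to be forced. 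Everything else (disjoint supports commute, \ref{PrtyCom}(1)(4), Remark~\ref{remSAF} to stay inside $[\mathcal G,\mathcal G]$) is routine.
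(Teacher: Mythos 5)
Your plan hinges on a step that is false: you claim that after conjugating the $\lceil c/2\rceil$ commutators into a common small interval, ``the product of the conjugated commutators \dots collapses to a single commutator by Properties~\ref{PrtyCom}(1)''. Property~\ref{PrtyCom}(1) merges $[a,b][a',b']$ into $[aa',bb']$ only when $a,b$ commute with $a',b'$, i.e.\ (for IET) when the two commutators have \emph{disjoint} supports; a product of several commutators all supported in the same interval $[0,\frac14)$ does not collapse to one commutator --- if it did, every group $\mathcal G_J$ would have commutator width $1$ and the lemma would be pointless. This is also why your count would come out as a constant ($1+1+$ bookkeeping) rather than the $\frac12 c_{H_2}(g)+\frac32$ of the statement: the halving structure of the bound is exactly what your argument has lost. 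The periodic-shrinking trick you invoke belongs to Proposition~\ref{Cpf}, not here; in this lemma $g$ is already supported in $[\frac12,1)$ and no shrinking is needed, and no auxiliary periodic commutator is spent.

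The actual proof pairs commutators \emph{across} the two halves of the list, not within one half. Write $c_{H_2}(g)=2p-r$ with $r\in\{0,1\}$ and $g=(c_1\cdots c_p)(c_{p+1}\cdots c_{2p})$, where $c_i=[a_i,b_i]$ with $a_i,b_i\in H_2$ and $c_{2p}$ possibly trivial. Conjugation by the rotation $R=R_{1/2}$ carries $H_2$ to maps supported in $[0,\frac12)$, so each $c_i$ (support in $[\frac12,1)$) commutes with each $c'_j:=Rc_jR^{-1}$. Then
$$g=(c_1c'_{p+1})\cdots(c_pc'_{2p})\cdot C,\qquad C=(c'_{p+1}\cdots c'_{2p})^{-1}(c_{p+1}\cdots c_{2p}),$$
where each $c_ic'_{p+i}$ is a single commutator by Property~\ref{PrtyCom}(1) (here the supports really are disjoint), and $C$ is a single commutator by Property~\ref{PrtyCom}(2), being the product of $A=c_{p+1}\cdots c_{2p}$ with a conjugate of $A^{-1}$. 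Hence $c_{\mathcal G}(g)\le p+1=\frac12\bigl(c_{H_2}(g)+r\bigr)+1\le\frac12 c_{H_2}(g)+\frac32$; the odd case does cost the extra $\frac12$, as you anticipated, but through this pairing, not through the collapse you describe. Part (b) is identical with $\overline{H_2}$ in place of $H_2$.
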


\begin{proof} Proofs of Items $(a)$ and $(b)$ are similar, changing $\mathcal G$ for $\overline{\mathcal G}$ and $H_2$ for $\overline{H}_2$, so we only prove Item $(a)$.

Let $g\in H_2\cap[\mathcal G,\mathcal G]$. %, first we note that $c_{H_2}( g)$ is well defined, by Remark \ref{remSAF}.
 We write $c_{H_2}(g) = 2p - r$ with $p\in \mathbb N^*$ and $r=0,1$. Therefore $$g=(c_1 ... c_p) (c_{p+1} ... c_{2p}),$$ where $c_i=[a_i, b_i]$ with $a_i$, $b_i$  in $H_2$ and the last commutator $c_{2p}$ is eventually trivial.

\smallskip

Let $R$ be the rotation of angle $\frac{1}{2}$. We denote by  $f'=R \circ f \circ R^{-1}$.

Note that if $f,k\in H_2$ then $f$ and $k'$ have disjoint supports and they commute. We write
$$g=(c_1 ... c_p) (c'_{p+1} ... c'_{2p}) (c'_{p+1} ... c'_{2p})^{-1} (c_{p+1} ... c_{2p}) =
(c_1c'_{p+1}) \  ... \ (c_p c'_{2p}) \ \  C,$$ where $C=(c'_{p+1} ... \ c'_{2p})^{-1} (c_{p+1} ... \ c_{2p})$.

On one hand, by Properties \ref{PrtyCom} (1), we have that $c_ic'_{p+i}$, $i=1,..., p$, are commutators.

On the other hand, by Properties \ref{PrtyCom} (2), it holds that $C=(c'_{p+1} ... c'_{2p})^{-1} (c_{p+1} ... c_{2p})$  is a commutator since it is the product of $(c_{p+1} ... c_{2p})$ and the conjugate by $R$ of its inverse.

Finally, we have $c_{\mathcal G}(g)\leq p+1$, thus
$$2c_{\mathcal G}(g)\leq 2p+2 = c_{H_2}( g) +r+2 \leq  c_{H_2}( g) +3.$$\vskip -0.7truecm\end{proof}

Repeatedly applying Lemma \ref{VaserOpti}, we get
\begin{prop} \label{Vasit} Let $t\in \mathbb N^*$.

$\displaystyle (a) \ \ \  {\text{ If }} g\in H_{2^t}\cap[\mathcal G,\mathcal G] =[ H_{2^t}, H_{2^t}] \  {\text{ then  }} \ \  c_{\mathcal G} (g) < \frac{1}{2^t} c_{H_{2^{t}}} (g) + 3.$

$\displaystyle (b) \ \ \  {\text{ If  }} g\in \overline{H_{2^t}} \  {\text{ then  }} \ \   c_{\overline{\mathcal G}} (g) < \frac{1}{2^t} c_{\overline{H}_{2^{t}}} (g) + 3.$
\end{prop}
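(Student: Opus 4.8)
The plan is to iterate Lemma~\ref{VaserOpti} $t$ times, moving from $H_{2^t}$ up to $\mathcal{G}$ (resp.\ from $\overline{H}_{2^t}$ up to $\overline{\mathcal{G}}$) one factor of $2$ at a time. The crucial observation making the iteration possible is that $H_{2^t}$, viewed via Remark~\ref{rem1} as $\mathcal{G}_{[1-2^{-t},1)}$, plays inside $H_{2^{t-1}}\cong\mathcal{G}_{[1-2^{-(t-1)},1)}$ exactly the role that $H_2$ plays inside $\mathcal{G}$: the interval $[1-2^{-t},1)$ is the ``right half'' of $[1-2^{-(t-1)},1)$. So Lemma~\ref{VaserOpti}(a), applied not to the pair $(\mathcal{G},H_2)$ but to the pair $(\mathcal{G}_{[1-2^{-(t-1)},1)},\mathcal{G}_{[1-2^{-t},1)})$ after rescaling by the homothecy of Remark~\ref{rem1}, gives
\[
c_{H_{2^{t-1}}}(g)\leq \tfrac12\, c_{H_{2^{t}}}(g)+\tfrac32
\qquad\text{for }g\in H_{2^{t}}\cap[\mathcal{G},\mathcal{G}].
\]
Here one also uses Remark~\ref{remSAF} (in the rescaled form) to identify $H_{2^t}\cap[\mathcal{G},\mathcal{G}]$ with $[H_{2^t},H_{2^t}]$ and, simultaneously, $H_{2^t}$ (as a subgroup of $H_{2^{t-1}}$) intersected with $[H_{2^{t-1}},H_{2^{t-1}}]$ with $[H_{2^t},H_{2^t}]$, so that $c_{H_{2^{t-1}}}(g)$ is well defined. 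For part (b) the analogous statement is cleaner because $\overline{H}_n$ is perfect (Remark~\ref{rem1} plus Proposition~\ref{perf}), so every $g\in\overline{H}_{2^t}$ lies in $[\overline{H}_{2^{t-1}},\overline{H}_{2^{t-1}}]$ with no arithmetic condition to check.

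Concretely, I would argue by induction on $t$. The base case $t=1$ is precisely Lemma~\ref{VaserOpti}. For the inductive step, take $g\in H_{2^t}\cap[\mathcal{G},\mathcal{G}]$; then $g\in H_{2^{t-1}}\cap[\mathcal{G},\mathcal{G}]=[H_{2^{t-1}},H_{2^{t-1}}]$, so the inductive hypothesis gives $c_{\mathcal{G}}(g)<\tfrac{1}{2^{t-1}}c_{H_{2^{t-1}}}(g)+3$, while the rescaled Lemma~\ref{VaserOpti} gives $c_{H_{2^{t-1}}}(g)\leq \tfrac12 c_{H_{2^t}}(g)+\tfrac32$. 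Substituting,
\[
c_{\mathcal{G}}(g)<\frac{1}{2^{t-1}}\left(\frac12 c_{H_{2^t}}(g)+\frac32\right)+3
=\frac{1}{2^t}c_{H_{2^t}}(g)+\frac{3}{2^t}+3,
\]
and since $3/2^t\geq 0$ this is not quite the claimed bound; I should instead keep the strict inequality from the induction hypothesis and absorb the extra $3/2^t<3\cdot(1-2^{-t})$\ldots\ actually the clean way is to prove by induction the sharper statement $c_{\mathcal{G}}(g)\leq \tfrac{1}{2^t}c_{H_{2^t}}(g)+3-\tfrac{3}{2^t}$ (which for $t=1$ reads $c_{\mathcal{G}}(g)\leq\tfrac12 c_{H_2}(g)+\tfrac32$, exactly Lemma~\ref{VaserOpti}), and then the same substitution closes the induction exactly, yielding $c_{\mathcal{G}}(g)\leq \tfrac{1}{2^t}c_{H_{2^t}}(g)+3-\tfrac{3}{2^t}<\tfrac{1}{2^t}c_{H_{2^t}}(g)+3$, which is the Proposition. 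Part (b) is identical with $\mathcal{G}$ replaced by $\overline{\mathcal{G}}$ and $H$ by $\overline{H}$ throughout.

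The main obstacle, and the point requiring the most care, is bookkeeping the identifications of Remark~\ref{rem1} so that Lemma~\ref{VaserOpti} can legitimately be invoked with $H_{2^{t-1}}$ in place of $\mathcal{G}$: one must check that conjugating by the homothecy sending $[1-2^{-(t-1)},1)$ to $[0,1)$ sends the subgroup $H_{2^t}$ to $H_2$, sends the rotation of angle $2^{-(t-1)}\cdot\tfrac12$ supported on $[1-2^{-(t-1)},1)$ to the rotation $R$ of angle $\tfrac12$ used in the proof of Lemma~\ref{VaserOpti}, and is compatible with the $SAF$ invariant so that the hypothesis $g\in[H_{2^{t-1}},H_{2^{t-1}}]$ is equivalent, after restriction, to $g\vert\in[\mathcal{G}_J,\mathcal{G}_J]$ with $J=[1-2^{-(t-1)},1)$. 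All of these are immediate from the cited Remarks, but they are the glue that makes the induction valid rather than merely plausible; beyond that the argument is a one-line recursion on the inequality.
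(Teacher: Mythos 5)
Your argument is correct and is essentially the paper's own proof: the authors likewise iterate Lemma~\ref{VaserOpti} via the rescaling of Remark~\ref{rem1}, proving by induction the estimate $c_{H_{2^{t-s}}}(g)\leq \frac{1}{2^s}c_{H_{2^t}}(g)+3\sum_{j=1}^{s}2^{-j}$, whose right-hand side is exactly your sharpened bound $\frac{1}{2^s}c_{H_{2^t}}(g)+3-\frac{3}{2^s}$, and then conclude with the strict inequality since $\sum_{j=1}^{t}2^{-j}<1$. Your self-correction to the sharper inductive statement is precisely the right fix, and your bookkeeping of the identifications (Remarks~\ref{rem1} and~\ref{remSAF}, and perfectness of $\overline{H}_n$ for part (b)) matches what the paper relies on.
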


\begin{proof}
As noted earlier, we only prove Item $(a)$.

Let $t\in\mathbb N^{*}$ and $g\in H_{2^t}\cap[\mathcal G,\mathcal G]$. From Remark \ref{rem1} %with $J=[ 1-\frac{1}{2^{t-1}}1)$
and  Lemma \ref{VaserOpti}, we obtain  $$ c_{H_{2^{t-1}}} (g) \leq \frac{1}{2} c_{ H_{2^{t}}} (g) + \frac{3}{2}.$$
It is easy to check by induction that for $s\in \{1, \cdots ,t\}$,  we have $$(E_s) \ \   c_{H_{2^{t-s}}} (g) \leq \frac{1}{2^s} c_{ H_{2^{t}}} (g) + 3 \sum_{j=1}^{s}\frac{1}{2^j}.$$
{\footnotesize Indeed, for $s=1$, $(E_1)$ is the first identity.}

{\footnotesize Fix $s\in \{1, \cdots ,t-1\}$ and suppose that $(E_s)$ holds. Then according to Remark \ref{rem1} %with $J=[ 1-\frac{1}{2^{t-(s+1)}},1)$
and Lemma \ref{VaserOpti} }
{\footnotesize $$ C_{H_{2^{t-(s+1)}}} (g) \leq \frac{1}{2} C_{ H_{2^{t-s}}} (g) + \frac{3}{2}.$$}
{\footnotesize Thus, by induction hypothesis }
{\footnotesize $$ C_{H_{2^{t-(s+1)}}} (g) \leq \frac{1}{2} \left( \frac{1}{2^s} C_{ H_{2^{t}}} (g) + 3 \sum_{j=1}^{s}\frac{1}{2^j}\right) + \frac{3}{2}.$$} 
{\footnotesize Therefore}
{\footnotesize $$C_{H_{2^{t-(s+1)}}} (g) \leq \left( \frac{1}{2^{s+1}} C_{ H_{2^{t}}} (g) + 3 \sum_{j=1}^{s}\frac{1}{2^{j+1}} \right) + \frac{3}{2},$$} {\footnotesize which leads immediately to $(E_{s+1})$.}

\medskip

Finally, noting that $H_1=\mathcal G$ and $\sum_{j=1}^{t}\frac{1}{2^j}= 1-\frac{1}{2^t}<1$, the identity $(E_t)$ leads to $$c_{\mathcal G} (g) < \frac{1}{2^t} c_{H_{2^{t}}} (g) + 3.$$ \vskip -0.7truecm\end{proof}

\section{The group $\overline{\mathcal G}$ is uniformly perfect.} 

The aim of this section is to prove Theorem \ref{thG}. %{thG} $\displaystyle C(\overline{\mathcal G})\leq 6.$

\subsection{Background material.} Let $g\in \mathcal G_m$.

\smallskip

\noindent The \textbf{combinatorial description} of $g$ is $(\lambda(g), \pi(g))$, where $\lambda(g)$ is an $m$-dimensional vector whose coordinates are  the lengths of $I_1, \cdots , I_m $, the continuity intervals of $g$ and $\pi(g)\in \mathcal S_m$ is the permutation on $\{1,\cdots,m\}$ that tells how the intervals $I_i$ are rearranged by $g$.

\smallskip

\noindent  We denote by $a_i(g)$ the discontinuity points of $g$. If $g$ is continuous on a half-open interval $J$, we define ${\mathbf{\delta_J}}(g):= g(x) -x$, for $x\in J$.

\smallskip

\noindent  \textbf{The translations of $g$} are $\delta_i(g):=\delta_{I_i}(g)$, $i=1,\cdots,m$.

\smallskip

Note that $a_i(g)$ and $\delta_i(g)$ are related to $(\lambda(g), \pi(g))$ by $$(*)   \ \ \ \ a_i(g)=\sum_{k=1}^{i-1} \lambda_k(g) \ \ \text{ and } \ \  \delta_i(g)= -\sum_{k=1}^{i-1}\lambda_k(g)+\sum_{k=1}^{\pi(i)-1}\lambda_{\pi^{-1}(k)}(g).$$

The map $g$ is said to be \textbf{rational} if all its discontinuity points are rational. It is easy to see that rational IET are periodic.  

\begin{defi} Let $m$ be a positive integer and $\pi\in\mathcal S _m$, we define a \textbf{metric} on $\mathcal G_{m,\pi}$ by $$d(f,g) = \sum_{i=1}^{m} \vert \lambda_i(f) - \lambda_i(g)\vert.$$ 
\end{defi}

\begin{propri} \label{Dist} Let $f$ and $g$ be elements of $\mathcal G_{m,\pi}$. Then 
\begin{itemize}
\item $d(f^{-1},g^{-1}) = d(f,g)$,
\item $\vert a_i(f) - a_i(g) \vert \leq d(f,g)$,
\item $\vert \delta_i(f) -\delta_i(g) \vert \leq 2d(f,g)$.
\end{itemize} \end{propri}

\begin{proof} \ 

$\bullet$ The first item is due to the fact that $\displaystyle \lambda_{\pi(i)}(f^{-1})= \lambda_i(f)$.

\smallskip

We deduce the remaining items from $(*)$, indeed

{\footnotesize {$\bullet$ $\displaystyle \vert a_i(f) -a_i (g)\vert = \vert \sum_{k=1}^{i-1} \lambda_k(f)-\lambda_k(g) \vert \leq d(f,g)$ and 

$\bullet$ $\displaystyle \vert \delta_i(f) -\delta_i(g) \vert = \vert \bigl(-\sum_{k=1}^{i-1}\lambda_k(f)+\sum_{k=1}^{\pi(i)-1}\lambda_{\pi^{-1}(k)}(f)\bigr) -\bigl(-\sum_{k=1}^{i-1}\lambda_k(g)+\sum_{k=1}^{\pi(i)-1}\lambda_{\pi^{-1}(k)}(g)\bigr) \vert \leq 2d(f,g)$.}} \end{proof}

\begin{lemm} \label{Lemsup} Let $g\in \mathcal G_m$ and let $l=\vert Fix(g)\vert$ be the Lebesgue measure of the fixed point set of $g$. Then, there exists $h\in \mathcal G_m$ such that $$Fix(h\circ g \circ h^{-1}) = [0, l).$$ In particular $\#BP (h\circ g \circ h^{-1})\leq 3m$. \end{lemm}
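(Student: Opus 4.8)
The plan is to collect the continuity intervals of $g$ into groups according to whether they are pointwise fixed or not, and to use a single IET $h$ that rigidly transports the fixed pieces to the left and the moved pieces to the right. First I would let $I_1,\dots,I_m$ be the continuity intervals of $g$, and split the index set as $F = \{\, i : g|_{I_i} = \id \,\}$ and $M = \{1,\dots,m\}\setminus F$; note $\fix(g)$ differs from $\bigcup_{i\in F} I_i$ by at most finitely many points (the endpoints of the $I_i$ that happen to be fixed), so $l = \sum_{i\in F}\lambda_i(g)$. I would then define $h\in\mathcal G$ to be the IET that maps each $I_i$, $i\in F$, by a translation into $[0,l)$, preserving the left-to-right order of these intervals among themselves, and maps each $I_i$, $i\in M$, by a translation into $[l,1)$, again preserving their mutual order. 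This $h$ is a genuine IET with at most $m$ continuity intervals (one per $I_i$), hence $h\in\mathcal G_m$.

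Next I would check that $h\circ g\circ h^{-1}$ is the identity exactly on $[0,l)$. For a point $y\in[0,l)$, $h^{-1}(y)$ lies in some $I_i$ with $i\in F$, so $g(h^{-1}(y)) = h^{-1}(y)$ and therefore $h g h^{-1}(y) = y$; this shows $[0,l)\subseteq \fix(h g h^{-1})$. Conversely $h g h^{-1}$ has support contained in $h(\supp g)$, and $\supp g$ agrees up to finitely many points with $\bigcup_{i\in M} I_i$, whose image under $h$ is contained in $[l,1)$; hence $\fix(h g h^{-1})$ contains $[0,l)$ and its complement in measure is all of $[l,1)$ minus a set of measure covered by the actual moved set, but in any case $[0,l)$ is precisely the fixed set up to a finite set — and since elements of $\mathcal G$ are taken up to the relevant identifications, $\fix(h g h^{-1}) = [0,l)$ as required. (I'd state the right-continuity bookkeeping carefully so that the half-open convention gives exactly $[0,l)$ and not $[0,l]$.)

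Finally, for the discontinuity-point count: $BP(h g h^{-1}) \subseteq BP(h^{-1}) \cup h(BP(g)) \cup h(g(BP(h^{-1})))$ by the subadditivity of $BP$ under composition recalled in the Introduction, and each of $h$, $g$ has at most $m$ discontinuity points, so each of the three sets on the right has at most $m$ elements, giving $\#BP(h g h^{-1}) \le 3m$. The main obstacle — really the only delicate point — is the endpoint bookkeeping: making sure that "transport the fixed intervals to the left, in order" can be realized by an honest IET (half-open intervals, right-continuity) and that the resulting fixed set is the half-open interval $[0,l)$ exactly, rather than off by a measure-zero endpoint set; everything else is a routine translation argument.
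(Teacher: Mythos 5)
Your construction is correct and essentially the same as the paper's: the paper's $h$ translates the connected components of $Fix(g)$ to the left and those of its complement to the right, which is exactly the map you obtain once adjacent continuity intervals with the same image are merged, and in both cases $BP(h)\subset BP(g)$ gives $h\in\mathcal G_m$ while subadditivity of $BP$ under composition gives the $3m$ bound. (Minor slip: the third term in your $BP$ estimate should be $h(g^{-1}(BP(h)))$ rather than $h(g(BP(h^{-1})))$, but either way it is a bijective image of at most $m$ points, so the count is unaffected.)
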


\begin{proof}
Denote by $F_1$, $F_3$, $\cdots$ , $F_{2p-1}$ the $p$ ordered connected components of $I\setminus Fix(g)$.

We write $F_i=[\alpha_i,\alpha_{i+1})$, for $i=2k-1$, $k=1,..., p$. Note that $\alpha_i\in BP(g)$.

Hence the connected components of $Fix(g)$ are the possibly empty intervals $F_0=[0,\alpha_1)$, $F_{2p}=[\alpha_{2p},1)$ and $$F_{2k}=[\alpha_{2k},\alpha_{2k+1}), \text{ for } k=1,..., p-1.$$

\smallskip

The required map $h$ is the IET whose combinatorial description is $(\lambda, \pi)$ 
$${\text{ with } }\left\{\begin{array}{c} \lambda_i=\vert F_i\vert, \ i=0,\cdots, 2p \  \ \ { \text{ and } } \ \ \pi \in \mathcal S(\{0,...,2p\}),   \cr \pi(0)=0,\ \  \pi(2k)=k { \text{ and } }\pi(2k-1)=k+p,  \ \ k=1,\cdots, p. \end{array}\right.$$ \ \ \ Finally we note that  $h\in \mathcal G_m$ since $BP(h)\subset \{\alpha_i\} \subset BP(g)$.\end{proof}

\subsection{Proof of Theorem \ref{thG}.} \ 
For proving Theorem \ref{thG}, we need the following

\begin{prop}\label{Cpf}
Let $n$ be a positive integer and let $f\in \mathcal G_m$. Then there exist two periodic elements $p, p' \in \mathcal{G}$ such that $$\vert supp( p \circ f \circ p')\vert \leq \frac{1}{n} {\text {\ \  and \ \  } } \#BP (p \circ f \circ p') \leq 5m.$$
\end{prop}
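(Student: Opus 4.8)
The plan is to reduce the size of the support of $f$ in two stages. The first stage shrinks the support below $\frac12$ by composing with one periodic map on each side; the second iterates a similar trick to go below $\frac1n$. For the first stage, write $f \in \mathcal G_m$ with continuity intervals $I_1,\dots,I_m$ and let $l = |\fix(f)|$. If $l$ is already close to $1$ there is nothing to do, so assume $l$ is small; then the complement of $\fix(f)$ has measure $1-l$ which may be large. The key observation is that one can find a rational (hence periodic, by the remark before Definition 4.3) restricted rotation $p'$ that ``folds'' the moving part of $I$ so that after composition the support lands inside a short interval. Concretely, I would first apply Lemma \ref{Lemsup}: there is $h\in\mathcal G_m$ with $\fix(h f h^{-1}) = [0,l)$, i.e.\ all the motion of $g:=hfh^{-1}$ is concentrated in $[l,1)$, and $\#BP(g)\le 3m$. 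Replacing $f$ by $g$ (conjugation does not change whether the conclusion can be met, since we may absorb $h^{\pm1}$ later — but actually we want the statement for $f$ itself, so we keep $h$ and note $p\circ f\circ p' = p h^{-1} \cdot g \cdot h p'$, so it suffices to produce periodic maps conjugating $g$).

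The main step is then: given $g\in\mathcal G_M$ (here $M\le 3m$) with support in a half-open interval $J=[l,1)$ of length $1-l$, produce periodic $q,q'$ with $|\supp(q g q')|$ as small as we like and with a controlled number of breakpoints. The natural device is to cut $J$ into $N$ equal subintervals $J_1,\dots,J_N$ and use a rational restricted rotation $q'$ on $J$ that cyclically permutes these pieces, together with a rational $q$ that does the same, chosen so that the ``active'' part of $g$ gets mapped into $J_1\cup J_2$ say; more precisely, since $g$ permutes its own continuity intervals, after precomposing with a suitable periodic IET $q'$ that rearranges the $J_i$'s one can arrange that $g\circ q'$ already fixes all of $J$ except a controlled short piece, and then postcomposing with a periodic $q$ finishes the job. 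I would count breakpoints using $BP(g_1\circ g_2)\subset BP(g_2)\cup g_2^{-1}(BP(g_1))$: each periodic map contributes $O(N)$ breakpoints a priori, so to keep the bound linear in $m$ (the stated $5m$) one must be careful to choose $q,q'$ whose breakpoints either coincide with those of $g$ or are absorbed — this is exactly where the subdivision must be chosen adapted to $BP(g)$, not into $N$ naive equal pieces. The honest approach: choose the cut points of the auxiliary rotations among the points $a_i(g)$ and their images, so that the composite has breakpoints only among $BP(g)\cup$ (a bounded number of new points), giving $\#BP \le 5m$ after bookkeeping (roughly: $3m$ from Lemma \ref{Lemsup}, plus a bounded surplus from the two periodic maps).

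The hard part will be this breakpoint accounting: producing the periodic maps is easy (rational restricted rotations are periodic, and arbitrarily fine cyclic permutations of subintervals exist), but ensuring simultaneously that (i) the support of the composite has measure $\le\frac1n$ and (ii) the composite has at most $5m$ discontinuity points requires choosing the subdivisions to be refinements of $\{a_i(f)\}$ and exploiting that $g$ maps its continuity intervals onto intervals, so that $q^{-1}(BP(q'))$ and the other terms in the subadditivity bound for $BP$ collapse onto the already-present breakpoints. I would organize it as: (1) use Lemma \ref{Lemsup} to get support in $[l,1)$ with $\le 3m$ breakpoints; (2) choose $N$ large with $\frac{1-l}{N}<\frac1n$, and cut $[l,1)$ into $N$ pieces whose cut set contains all the $a_i$ lying in $[l,1)$ — at most $m$ of them, so at most $m$ of the $N$ pieces are ``non-uniform'', but uniformity is not needed; (3) take $q'$ to be the periodic IET cyclically sliding these pieces so that every continuity interval of $g$ except those meeting $J_1$ is sent identically — then $g\circ q'$ has support in $J_1\cup(\text{one more piece})$, of measure $\le\frac1n$, with breakpoints controlled by the subadditivity formula; (4) absorb the conjugating $h$ from step (1) into the final periodic factors (note $h$ need not be periodic, so instead keep it: set $p' = h^{-1}q'$... this is the one place to be careful — better to run the whole argument on $f$ directly, taking the pieces adapted to $BP(f)$ from the start and letting Lemma \ref{Lemsup} guide the shape of $q'$ without literally conjugating). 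I expect step (3)–(4) to be where all the work sits; the rest is routine.
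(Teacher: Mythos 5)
There is a genuine gap, and it sits exactly where you locate the ``routine'' part rather than where you locate the hard part. For $p\circ f\circ p'$ to have support of measure $\leq \frac1n$ you need $p\circ f\circ p'=\mathrm{Id}$ off a set of measure $\frac1n$, i.e.\ you need $f$ to coincide, off a small set, with the product $p^{-1}\circ (p')^{-1}$ of two periodic maps. Your step (3) asks for much more: a \emph{single} periodic $q'$ with $g\circ q'$ supported in a short piece, which forces $q'=g^{-1}$ off a small set, i.e.\ $g$ must agree off a small set with the periodic map $(q')^{-1}$. For a general IET (say an irrational rotation, or any $g$ with irrational translation constants) no cyclic permutation of $N$ equal subintervals of $[l,1)$ achieves this: $g$ does not translate by multiples of the piece length, so $g\circ q'$ moves almost every point no matter how the pieces are rearranged. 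The phrase ``every continuity interval of $g$ except those meeting $J_1$ is sent identically'' is the unproved crux, and it is false as a property one can arrange by choosing $q'$. Relatedly, invoking Lemma \ref{Lemsup} at the outset buys nothing here: $l=\vert Fix(f)\vert$ can be $0$, conjugation preserves $\vert Fix\vert$, and in the paper that lemma is used \emph{after} Proposition \ref{Cpf} (to place the already-small support inside $[1-\frac1{2^t},1)$), not to create smallness.

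The missing idea in the paper's proof is an approximation step followed by a cancellation step, each contributing one periodic factor. First, one chooses a \emph{rational} (hence periodic) IET $p$ with the same combinatorics as $f^{-1}$ and with $d(f^{-1},p)\leq \frac{\epsilon}{2m}$, the breakpoints $b_i'$ of $p$ sitting just to the left of those of $f^{-1}$; then $p\circ f$ is continuous on each truncated interval $[a_i,a_{i+1}-\frac{\epsilon}{2m})$ and translates it by a tiny amount $\partial_i$ with $\vert\partial_i\vert\leq\frac{\epsilon}{m}$ (this is where Properties \ref{Dist} are used). Second, on each $[a_i,a_{i+1})$ one takes the finite-order restricted rotation $R_i$ of angle $\partial_i$ and support $[a_i,a_i+r_i\partial_i)$, which agrees with $p\circ f$ on all but a set of measure $O(\frac{\epsilon}{m})$ of that interval; since the $R_i$ have disjoint supports, $p'=(\prod R_i)^{-1}$ is periodic, and $p\circ f\circ p'$ has support of measure $\leq 2\epsilon$ and at most $5m$ breakpoints ($2m$ from $p\circ f$ plus $3$ per $R_i$). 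Your proposal contains neither the metric approximation of $f^{-1}$ by a rational IET nor the small-angle restricted rotations that cancel the residual translations, and without them the two periodic factors cannot be produced. The breakpoint bookkeeping you flag as the difficulty is then immediate.
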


\begin{proof}

Let $n$ be a positive integer and $f\in \mathcal G_{m,\pi}$.  We set $BP(f)=\{ a_i, \ i=1 \cdots m \}$, $I_i=[a_i, a_{i+1})$ and $BP(f^{-1}) = \{  b_i, i=1,\cdots, m \}$. Fix $0<\epsilon < \frac{1}{2n}$ small enough ($\epsilon \ll \vert I_i \vert$). 

\medskip

We consider $p\in \mathcal G_{m,\pi^{-1}}$ a rational IET such that  $d(f^{-1}, p) \leq \frac{\epsilon}{2m}$ and  $BP(p)=\{b'_i, i=1,\cdots, m\}$ satisfies $b_i-\frac{\epsilon}{2m} < b'_i \leq  b_i$. This map $p$ is periodic.% and  it is also a commutator in ${\mathcal{G}}$, according to \cite{GLIET}.

\medskip

\begin{claii} \label{clpof} By construction, $f_{\epsilon}=p\circ f$ satisfies  $\#BP(f_{\epsilon})\leq 2m$, it is continuous on $[a_i,a_{i+1} -\frac{\epsilon}{2m})$ and $\partial_i:=\delta_{[a_i,a_{i+1} -\frac{\epsilon}{2m})}(p\circ f)$ satisfies $\vert \partial_i\vert \leq \frac{\epsilon}{m}$.\end{claii}

Indeed, obviously $\#BP(f_{\epsilon})\leq \#BP(f) + \#BP(p) \leq 2m$.

\smallskip

For every $x\in [a_i,a_{i+1} -\frac{\epsilon}{2m})$, one has $f(x) = x+\delta_{I_i} (f)$ and  
$$f(x) \in [b_{\pi(i)},b_{\pi(i)+1}-\frac{\epsilon}{2m}) \subset [b'_{\pi(i)},b'_{\pi(i)+1}), {\text { then }}$$
$$p\circ f (x) = x + \delta_{I_i} (f) +  \delta_{[b'_{\pi(i)},b'_{\pi(i)+1})} (p).$$

Since $d(f^{-1}, p) \leq \frac{\epsilon}{2m}$, one has: 
$$ \frac{\epsilon}{m}\geq \vert \delta_{[b'_{\pi(i)},b'_{\pi(i)+1})} (p) - \delta_{[b_{\pi(i)},b_{\pi(i)+1})} (f^{-1}) \vert =  \vert \delta_{[b'_{\pi(i)},b'_{\pi(i)+1})} (p) + \delta_{I_i} (f) \vert, {\text { therefore }} $$
$$\vert \partial_i\vert=\vert p\circ f (x) -x \vert =\vert \delta_{I_i} (f) + \delta_{[b'_{\pi(i)}),b'_{\pi(i)+1})} (p) \vert \leq \frac{\epsilon}{m}.$$ This ends the proof of the claim which is summarized by the following picture. 

\bigskip

{\unitlength=0,37 mm
\hskip 4.5 truecm  \begin{picture}(110,90)
\put(0,10){\line(1,0){100}}
%\put(30,11){\textcolor{blue}{\line(1,0){35}}}
%\put(30,11.5){\textcolor{blue}{\line(1,0){35}}}
%\put(40,16){\textcolor{blue}{$\scriptstyle Int_{\frac{\epsilon}{m}} (I_i)$}}
\put(-20,60){\line(1,0){120}}
\put(0,90){\line(1,0){90}}

%le premier intervalle
\put(0, 8){$\scriptstyle [ $}
\put(0, -2){$\scriptstyle a_i $}

\put(80, 8){$\scriptstyle [ $}
\put(60, -2){$\scriptstyle a_{i+1} -\frac{\epsilon}{2m} $}

\put(99,8){$\scriptstyle [$}
\put(96, -2){$\scriptstyle a_{i+1}  $}

%le second intervalle
\put(0, 58){$\scriptstyle [ $}
\put(0,  48){$\scriptstyle b_{\pi(i)} $}

\put(-20, 58){$\scriptstyle [ $}
\put(-20,  48){$\scriptstyle b'_{\pi(i)} $}

\put(80, 58){$\scriptstyle [ $}
\put(55, 48){$\scriptstyle b_{\pi(i)+1} -\frac{\epsilon}{2m} $}

\put(89,58){$\scriptstyle [$}
\put(90,63){$\swarrow$}
\put(100, 72){$\scriptstyle b'_{\pi(i)+1} $}
\put(88,59){\textcolor{red}{$\scriptscriptstyle \bullet$}}

\put(99,58){$\scriptstyle [$}
\put(106, 48){$\scriptstyle b_{\pi(i)+1} $}

% f \put(50,30){$\uparrow f$}
\put(37,30){$f$}
\put(47,20){$\vector(0,1){25}$}  %\textcolor{gray}{\put(28,55){\vector(1,-1){25}}}
%le dernier intervalle
\put(0, 88){$\scriptstyle [ $}
\put(89,88){$\scriptstyle [$}
\put(88,89){\textcolor{red}{$\scriptscriptstyle \bullet$}}
%p \put(50,70){$\uparrow p$}
\put(37,70){$p$}
\put(47,65){$\vector(0,1){17}$}
% le point $a'_{i+1} = f^{-1}( b'_{\pi(i)+1})$
\put(88,9){\textcolor{red}{$\scriptscriptstyle \bullet$}}
%\put(91,12){$\swarrow$}
%\put(102,20) {$\scriptstyle f^{-1}( b'_{\pi(i)+1})$}
\end{picture}}

\bigskip

We turn now on to the proof of Proposition \ref{Cpf}. Let $i\in \{1,\cdots,m\}$.

\medskip

If $\partial_i= 0$, we set $R_i=Id$.

\medskip

In the case that $\partial_i> 0$, we define $R_i$ to be the finite order restricted rotation of support  $[\ a_i \ , \  a_i + r_i \partial_i\  )$ and of angle $\partial_i$, where $r_i$ is the greatest integer such that $a_i+r_i \partial_i \leq min \{ \  a_{i+1} -(\frac{\epsilon}{2m}-\partial_i) \ , \  a_{i+1} \ \}$.

\medskip

By definition, $R_i$ and $f_\epsilon$ coincide on $[a_i,a_{i} + (r_i-1) \partial_i)$ and $\left\vert [\ a_{i} +r_i \partial_i\ , \ a_{i+1} \ ) \right\vert \leq  \frac{\epsilon}{m}$.

\smallskip

\begin{quote}{\footnotesize{Indeed, $f_\epsilon$ is continuous  on $[a_i,a_{i} + (r_i-1) \partial_i)$, since $$a_{i} + (r_i-1) \partial_i =a_{i} + r_i\partial_i -\partial_i \leq a_{i+1} -(\frac{\epsilon}{2m}-\partial_i)  -\partial_i = a_{i+1} -\frac{\epsilon}{2m}.$$ 
In addition, by the maximality of $r_i$, either $a_i+ (r_i+1)\partial_i $ is greater  than $$ a_{i+1} -(\frac{\epsilon}{2m}-\partial_i)\ \ \text{  thus } \ \vert [a_{i} +r_i \partial_i,a_{i+1})\vert= a_{i+1} - (a_{i} +r_i \partial_i)<\partial_i+(\frac{\epsilon}{2m}-\partial_i)=\frac{\epsilon}{2m} \text{ \ \ \ or}$$  \hskip -3truemm $a_{i+1}$ \hskip 2.3truecm so  \ \ \ \  
 $\vert [a_{i} +r_i \partial_i,a_{i+1})\vert= a_{i+1} - (a_i+ r_i\partial_i) < \partial_i \leq \frac{\epsilon}{m}$.}}\end{quote}
\medskip

The same argument remains valid for negative $\partial_i$ by using non positive integers $r_i$.
\medskip

Finally, the map $g:=f _{\epsilon} \circ  \prod_{1}^m {R_i}^{-1} $ satisfies $\#BP(g)\leq 5m$ because $\#BP(R_i)\leq 3$.

\smallskip

Since $supp (R_i) \subset [a_i, a_{i+1})$, the supports of the $R_i$'s are disjoints and $p'=(\prod_{1}^m R_i)^{-1}$ is periodic and it is also a commutator in ${\mathcal{G}}$, according to Proposition \ref{th2}.

\smallskip

%({\footnotesize{$\#BP( \prod_{1}^m R_i )\leq  3m$ since $\#BP(R_i)=3$}})
But \ $\displaystyle g_{\displaystyle \vert_{ {\displaystyle R_i (} [a_i ~,~a_{i} + (r_i-1) \partial_i){\displaystyle )}}} \ = \ Id$, \ therefore

\vskip -0.3truecm

$$ \ \ \ \ \ \ \ \ \ \  \vert supp(g) \vert \leq 1- \sum_{i=1}^m \vert [a_i,a_{i} + (r_i-1) \partial_i) \vert$$
$$\ \ \ \ \ \ \ \ \ \ \ \ \ \ \ \ \ \ \ \ \ \ \ \ \ \ \ \  \ \ \ \ \ \ \ \ \  \ \ \ \leq 1-\sum_{i=1}^m (\vert [a_i,a_{i+1})\vert -  (\partial_i +\frac{\epsilon}{m}))\leq 2 \epsilon \leq \frac{1}{n}.$$
\vskip -0.9truecm\end{proof}

%, defining $R_i$ as the  rotation of support the interval $[a_i, a_i - r_i \delta_i)$,  of angle $\delta_i$. where $r_i$ is the largest integer such that $a_i-r_i \delta_i \leq a_{i+1} -(\frac{\epsilon}{2m}-\delta_i)$,  $R_i$ and $f_\epsilon$ coincide on $[a_i - \delta_i,a_{i}- r_i \delta_i)$

\smallskip

We turn now on to the proof of Theorem \ref{thG}. 

\smallskip

We first consider an IET $f\in \mathcal G_{m,\pi}$ viewed as an element of ${\overline{\mathcal G}}$. Let $t \in \mathbb N^*$.

Applying Proposition \ref{Cpf} to $f$ and $n=2^t$, we get that there exist two periodic elements $p, p'\in \mathcal{G}$ such that the support of $g= p \circ f \circ p'\in \mathcal{G}_{5m} $ has measure less than or equal to $\frac{1}{2^t}$. 

By Lemma \ref{Lemsup}, the map $g$ is conjugated to an element $g'$ of $H_{2^{t}}$ for which $\#BP(g')\leq 15m$. Since $p$ and $p'$ are periodic and $g$ and $g'$ are conjugated, we have
$$c_{\overline{\mathcal G}} (f) \leq c_{\overline{\mathcal G}} (g) + 2 = c_{\overline{\mathcal G}} (g') + 2.$$
Then by  Proposition \ref{Vasit} (b),
$$c_{\overline{\mathcal G}} (f) < \frac{1}{2^t} c_{\overline{H}_{2^{t}}} (g') + 5.$$
As $\#BP(g'\vert_{[1-\frac{1}{2^t} , 1)}) \leq \#BP(g')$, %(c'est egal ou $-1$ et pour n'importe quelle restriction c'est plus petit ou égal) 
Remark \ref{rem1} and Proposition \ref{perf} imply that $$c_{\overline{H}_{2^{t}}} (g') \leq 15m-1.$$
Finally, for any $t\in \mathbb N^*$ one has  $$c_{\overline{\mathcal G}}(f) < \frac{15m-1}{2^t} + 5$$
and choosing $t$ large enough, we obtain  $$c_{\overline{\mathcal G}}(f) \leq  5. $$

Thus we get $c_{\overline{\mathcal G}}(f) \leq  5$, for any $f\in \mathcal G$.

\medskip

For the general case, we consider $F\in \overline{\mathcal G}$. According to Lemma \ref{invRR}, the map $F$ can be decomposed as the product of an involution that is a commutator and an element of $\mathcal G$. Therefore, we have proved that $c_{\overline{\mathcal G}}(F) \leq 1+5=6$, for any $F\in \overline{\mathcal G}$. \hfill $\square$

%-------------------Section 5 :  c_m(G) and c_\alpha(G)

\section{Conditions for uniform perfectness of ${\mathcal G}$.} 

In this section we give two sufficient conditions for $\mathcal G$ to be uniformly perfect.
  
\subsection{The commutator length is bounded when fixing the number of discontinuity points.}

\smallskip

We prove the following statement that directly implies Theorem \ref{th3}.

\begin{theo}\label{thGm}
If for any positive integer $m$, $C_m(\mathcal G) := sup \{ c_{\mathcal G} (g ) \ , \ g \in [\mathcal G, \mathcal G] \cap \mathcal G_m \}$ is finite, then $\mathcal G$ is uniformly perfect and $c(\mathcal G) \leq 5$. \end{theo}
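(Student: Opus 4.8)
The plan is to bootstrap from the already-proven bound $c_{\overline{\mathcal G}}(f)\le 5$ for $f\in\mathcal G$ by re-running the same argument entirely inside $\mathcal G$, the only difference being that $\mathcal I_{(0,1)}$ and restricted rotations are not commutators in $\mathcal G$ itself — but under the hypothesis, since periodic IET are commutators in $\mathcal G$ (Proposition \ref{th2}) and the conjugating maps $p,p'$ produced by Proposition \ref{Cpf} are periodic, the extra cost stays controlled. First I would take $g\in[\mathcal G,\mathcal G]\cap\mathcal G_m$ for some fixed $m$, and apply Proposition \ref{Cpf} with $n=2^t$ to obtain periodic $p,p'\in\mathcal G$ such that $\tilde g:=p\circ g\circ p'$ has support of measure $\le \frac{1}{2^t}$ and $\#BP(\tilde g)\le 5m$. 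Since $p,p'$ are periodic IET, they are commutators in $\mathcal G$ by Proposition \ref{th2}, so $c_{\mathcal G}(g)\le c_{\mathcal G}(\tilde g)+2$; note also $\tilde g\in[\mathcal G,\mathcal G]$ because $SAF_I$ kills commutators and hence $SAF_I(\tilde g)=SAF_I(p)+SAF_I(g)+SAF_I(p')=0$.

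Next, by Lemma \ref{Lemsup} applied to $\tilde g\in\mathcal G_{5m}$ (here the conjugating map $h$ lies in $\mathcal G_{5m}$, so it contributes no cost because we will absorb conjugations), I get a conjugate $g'$ of $\tilde g$ with $\fix(g')=[0,l)$ and $\#BP(g')\le 15m$; thus $g'\in H_{2^t}\cap[\mathcal G,\mathcal G]=[H_{2^t},H_{2^t}]$ by Remark \ref{remSAF}, and $c_{\mathcal G}(g)\le c_{\mathcal G}(g')+2$ since conjugation does not change commutator length. Now invoke Proposition \ref{Vasit}(a): $c_{\mathcal G}(g')<\frac{1}{2^t}c_{H_{2^t}}(g')+3$. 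It remains to bound $c_{H_{2^t}}(g')$ uniformly. Identifying $H_{2^t}$ with $\mathcal G_{[1-2^{-t},1)}$ via Remark \ref{rem1}, the restriction $g'\vert_{[1-2^{-t},1)}$ has at most $15m$ discontinuity points and lies in $[\mathcal G_{[1-2^{-t},1)},\mathcal G_{[1-2^{-t},1)}]$ by Remark \ref{remSAF}; by Remark \ref{rem1} its commutator length in $H_{2^t}$ equals that in $\mathcal G$ of the corresponding IET on $I$ with $\le 15m$ discontinuity points, which is $\le C_{15m}(\mathcal G)$ — finite by hypothesis, and crucially independent of $t$. Hence
\[
c_{\mathcal G}(g)<\frac{C_{15m}(\mathcal G)}{2^t}+5\qquad\text{for every }t\in\mathbb N^*,
\]
and letting $t\to\infty$ gives $c_{\mathcal G}(g)\le 5$. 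Since $m$ was arbitrary and every element of $[\mathcal G,\mathcal G]$ lies in some $\mathcal G_m$, this yields $c(\mathcal G)\le 5$, proving Theorem \ref{thGm}; Theorem \ref{th3} follows because uniform perfectness of $\mathcal G$ means $c(\mathcal G)<\infty$, which a fortiori makes each $C_m(\mathcal G)$ finite.

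The main obstacle I anticipate is verifying that the cost bookkeeping is airtight: specifically, that the number of discontinuity points feeding into $C_\bullet(\mathcal G)$ stays a fixed multiple of $m$ through both Proposition \ref{Cpf} (giving $5m$) and Lemma \ref{Lemsup} (giving $3\cdot 5m=15m$), and that no hidden non-periodic conjugator sneaks in a commutator we cannot pay for. The periodicity of $p,p'$ in Proposition \ref{Cpf} is what makes the argument work over $\mathcal G$ rather than only over $\overline{\mathcal G}$ — in $\overline{\mathcal G}$ one could afford the involution factor from Lemma \ref{invRR}(1), whereas here we have nothing analogous, so the entire reduction must land inside $\mathcal G$ and rely on Proposition \ref{th2}. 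A secondary point to check carefully is that Proposition \ref{Vasit}(a) applies: it requires $g'\in H_{2^t}\cap[\mathcal G,\mathcal G]$, which we ensured via the $SAF$ argument, so the iterated Dennis–Vaserstein estimate is legitimate and the $\frac{1}{2^t}$ factor indeed lets us push the finite-but-possibly-large quantity $C_{15m}(\mathcal G)$ below any positive threshold.
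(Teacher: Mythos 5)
Your proof is correct and follows essentially the same route as the paper: reduce via Proposition \ref{Cpf} and Lemma \ref{Lemsup} to a conjugate $g'\in H_{2^t}\cap[\mathcal G,\mathcal G]$ with at most $15m$ discontinuities, pay for the periodic conjugators with Proposition \ref{th2}, apply Proposition \ref{Vasit}(a), bound $c_{H_{2^t}}(g')$ by $C_{15m}(\mathcal G)$ using the hypothesis together with Remarks \ref{rem1} and \ref{remSAF}, and let $t\to\infty$. The only cosmetic difference is that you justify $\tilde g\in[\mathcal G,\mathcal G]$ via the $SAF$ invariant where the paper simply notes that $p$ and $p'$ are commutators; both are valid.
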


\begin{proof} Let $f\in [\mathcal G, \mathcal G] \cap \mathcal{G}_m$ and $t\in \mathbb N$. Proposition \ref{Cpf} and Lemma \ref{Lemsup} with $n=2^t$ show that that there exist two periodic elements $p', p\in \mathcal{G}$ such that $g= p \circ f \circ p' \in \mathcal{G}_{5m}$ is conjugated to an element $g'$ of $H_{2^{t}}\cap \mathcal G_{15m}$. By Proposition \ref{th2}, $p$ and $p'$ are commutators then $g\in [\mathcal G, \mathcal G]$. Moreover, $[\mathcal G, \mathcal G]$ is normal so  $g'\in H_{2^{t}}\cap [\mathcal G,\mathcal G]$.

Therefore, according to  Proposition \ref{Vasit}
$$c_{\mathcal G} (f) \leq c_{\mathcal G} (g) + 2= c_{\mathcal G} (g') + 2 < \frac{1}{2^t} c_{  H_{2^{t}}} (g') + 5.$$
As $c_{H_{2^{t}}} (g') \leq C_{15m}(\mathcal G )$, one has for any $t\in \mathbb N^*$ 
$$c_{\mathcal G}(f) < \frac{C_{15m}(\mathcal G )}{2^t} + 5. $$
Choosing $t$ large enough, we get  $\displaystyle c_{\mathcal G}(f) \leq  5.$\end{proof}

\subsection{The commutator length is bounded when prescribing the arithmetic.} \ 

Let $p\in \mathbb N^*$ and $\alpha=(\alpha_1, \cdots, \alpha_p) \in [0,1)^p$ such that $\alpha_1\notin \mathbb Q$.
 
\subsubsection{Background material.} \ 

We denote by $\Delta_{\alpha}$ the abelian subgroup of $\mathbb R$ generated by $\alpha_1,\cdots,\alpha_p$ and $1$.

Note that the condition $\alpha_1\notin \mathbb Q$ insures that $\Delta_{\alpha}$ is dense in $[0,1)$.

Let $J$ be a half-open interval with endpoints in $\Delta_\alpha$. 

\begin{defi} \ 

We set $\Gamma_{\alpha}:=\{ g \in \mathcal{G} : BP(g) \subset \Delta_{\alpha}\}$ %is contained in $\Delta_{\alpha}$.=\{g(x)-x , x \in I\}$.the set of translations of $g$). 
and $\Gamma_{\alpha} (J):=\{ g \in \mathcal{G}_J : BP(g) \subset \Delta_{\alpha}\}$.
\end{defi}

\smallskip
It is plain that any $g\in \mathcal{G}$ is either periodic or belongs to some $\Gamma_{\alpha}$.  Indeed, if $g$ is not periodic then its length vector $\lambda$  has at least one irrational coordinate and $\alpha$ is obtained from $\lambda$ by permutation.
%ng its coordinates. 

Note that $\Gamma_{\alpha} (J)$ is the collection of all  maps $g\in \mathcal G_J $ whose extensions to $I$ by the identity map belong to $\Gamma_{\alpha}$. But $\Gamma_{\alpha} (J)$ does not coincide with the set obtained by conjugating $\Gamma_\alpha$ through the homothecy that carries $J$ into $I$. For $J=[c,d)$, the last set is $\{g\in \mathcal G_J : BP(g)\subset c +  \frac{\Delta_{\alpha}}{d-c}\}$. For this reason, the properties of $\Gamma_{\alpha} (J)$ are not direct consequences of the ones of $\Gamma_{\alpha}$.

\begin{propri} \label{CJN} Let $g\in\Gamma_{\alpha}(J)$ and $I_i$ be the continuity intervals of $g$.
\begin{enumerate}
\item The lengths of the $I_i$ and the translations of $g$ are in $\Delta_{\alpha}$.% and $\Delta_{\alpha}$ is called the \textbf{rank-group} of $f$.

\smallskip

\item $\Gamma_{\alpha}(J)$ is a subgroup of $\mathcal G_J$.

\smallskip

\item The endpoints of the connected components of $Fix(g)$ belong to $\Delta_{\alpha}$.
\end{enumerate} \end{propri}

\begin{proof}  %Let  $g\in \Gamma_{\alpha}(J)$ with  length  vector $(\lambda_i)$.
 
\noindent \textit{Item (1).} The endpoints of the $I_i$ are the discontinuity points of $g$ and the left endpoint of $J$. Therefore if $g\in\Gamma_{\alpha}(J)$ any length $\lambda_i=\vert I_i \vert$ belongs to $\Delta_{\alpha}$. The translations of $g$ also belong to $\Delta_{\alpha}$, as linear combinations of the $\lambda_i$'s with coefficients in $\{-1,0,1\}$.

\noindent \textit{Item (2).} According to Item (1), any $f\in \Gamma_{\alpha}(J)$  preserves $\Delta_{\alpha}$. Therefore the relations $ BP(f^{-1}) = f (BP(f))$ and  $BP(f_1 \circ f_2) \subset BP(f_2) \cup f_2^{-1} (BP(f_1))$ imply that $\Gamma_\alpha(J)$ is stable by taking inverse and composite.

\noindent \textit{Item (3).} From the definition of $Fix(g)$, it follows that every endpoint of a connected component of $Fix(g)$ is a discontinuity point of $g$. %either or the left endpoint of $J$. 
\end{proof}

Before stating our last theorem, we give
\begin{defi} We say that $\mathcal G$ has \textbf{partial uniform perfectness} if for any  $p\in \mathbb N^*$ and $\alpha \in [0,1)^p$ it holds that $C_\alpha(\mathcal G) := sup \{  c_{\mathcal G} (g ) \ , \ g \in [\mathcal G, \mathcal G] \cap \Gamma_{\alpha}\}$ is finite. \end{defi}

\begin{theo}\label{th4} 
If $\mathcal G$ has partial uniform perfectness then $\mathcal G$ is uniformly perfect.%and $c(\mathcal G)\leq 7$. 
\end{theo}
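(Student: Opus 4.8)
The goal is to deduce uniform perfectness of $\mathcal G$ from partial uniform perfectness, i.e. from the finiteness of each $C_\alpha(\mathcal G)$. The key point is that every $g\in\mathcal G$ is either periodic or lies in some $\Gamma_\alpha$, so it suffices to bound the commutator length on each of these two families uniformly, with a bound independent of $\alpha$ and of the period. Periodic elements are already handled: by Proposition \ref{th2} every periodic IET is a single commutator. So the real content is to produce a bound for $g\in[\mathcal G,\mathcal G]\cap\Gamma_\alpha$ that does not depend on $\alpha$.

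\textbf{Main steps.} First I would mimic the proof of Theorem \ref{thGm}: given $f\in[\mathcal G,\mathcal G]\cap\Gamma_\alpha$ and $t\in\mathbb N^*$, apply Proposition \ref{Cpf} and Lemma \ref{Lemsup} with $n=2^t$ to obtain periodic elements $p,p'\in\mathcal G$ (commutators, by Proposition \ref{th2}) such that $g=p\circ f\circ p'$ is conjugate to some $g'$ with support in $[1-\tfrac1{2^t},1)$. The crucial refinement over Theorem \ref{thGm} is arithmetic control: I must check that the rational IET $p$ supplied by the proof of Proposition \ref{Cpf}, the restricted rotations $R_i$ used there (whose angles are the $\partial_i$, which are $\Delta_\alpha$-combinations of the $\lambda_i(f)$, hence in $\Delta_\alpha$ once we also throw in the rationals from $p$), and the conjugating map $h$ from Lemma \ref{Lemsup} (whose $BP$ is contained in $BP(g)$, hence in $\Delta_\alpha$ enlarged by finitely many rationals) all keep the breakpoints inside $\Delta_{\alpha'}$ for a possibly larger but still finitely generated subgroup $\Delta_{\alpha'}$ of $\mathbb R$ — or better, I would arrange from the start that $\alpha$ already contains $1$ and the relevant rationals so that $\Delta_\alpha$ is unchanged. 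Then $g'\in[\mathcal G,\mathcal G]\cap\Gamma_{\alpha'}\cap\overline H_{2^t}$, and by Remark \ref{remSAF} its restriction lies in $\Gamma_{\alpha'}(H)$-type subgroup, so $c_{H_{2^t}}(g')\le C_{\alpha'}(\mathcal G)<\infty$. Proposition \ref{Vasit}(a) then gives
$$c_{\mathcal G}(f)\le c_{\mathcal G}(g)+2<\frac{1}{2^t}c_{H_{2^t}}(g')+5\le\frac{C_{\alpha'}(\mathcal G)}{2^t}+5,$$
and letting $t\to\infty$ yields $c_{\mathcal G}(f)\le5$. Combined with $c_{\mathcal G}\le1$ on periodic elements, we get $c(\mathcal G)\le5<\infty$, proving uniform perfectness (Theorem \ref{th3}'s bound follows too).

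\textbf{Expected obstacle.} The delicate part is precisely the bookkeeping that ensures the auxiliary maps $p,p',h$ do not push the breakpoints out of a \emph{fixed} finitely generated group: $p$ is rational, so its breakpoints are rational, and in general $\Delta_\alpha$ need not contain all rationals, so the ambient group genuinely grows from $\Delta_\alpha$ to $\Delta_{\alpha}+\mathbb Q$ (still finitely generated — add a single extra generator coming from the finitely many rational breakpoints actually used). One must verify the partial uniform perfectness hypothesis is quantified over \emph{all} $\alpha$, so applying it to this enlarged tuple is legitimate, and that enlarging $\alpha$ does not spoil $\alpha_1\notin\mathbb Q$. I would also double-check that $g$ lands in $[\mathcal G,\mathcal G]$ (it does, since $p,p'\in[\mathcal G,\mathcal G]$ which is normal) and that the conjugation by $h$ preserves both membership in $\Gamma_{\alpha'}$ and the support condition. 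Once these arithmetic invariances are nailed down, the rest is a verbatim repeat of the Theorem \ref{thGm} argument.
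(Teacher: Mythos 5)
Your plan diverges from the paper's: the paper does not push the Theorem \ref{thGm} machinery through $\Gamma_\alpha$ and let $t\to\infty$; it instead proves Proposition \ref{invBIET} (every $f\in\Gamma_\alpha$ is a product of four involutions of $\Gamma_\alpha$ and a $\Gamma_\alpha$-conjugate of an element of $H_2\cap\Gamma_\alpha$) and then runs a bootstrap at the single level $n=2$: writing $C_\alpha(\mathcal G)\le 4+\tfrac12 C_\alpha(\mathcal G)+\tfrac32$, whence $C_\alpha(\mathcal G)\le 11$. That inequality only closes because $C_\alpha(\mathcal G)$ is assumed finite --- this is exactly where the hypothesis enters --- and no limit in $t$ is ever taken.

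The reason the paper avoids your route is that your key step $c_{H_{2^t}}(g')\le C_{\alpha'}(\mathcal G)$ is not justified, and the error term does not vanish as $t\to\infty$. Two sources of $t$-dependence enter the numerator. First, $C_{\alpha'}(\mathcal G)$ bounds commutator lengths in $\mathcal G=\mathcal G_{[0,1)}$, whereas $c_{H_{2^t}}$ is computed in $\mathcal G_{[1-2^{-t},1)}$; transferring by the homothecy rescales the arithmetic class (the paper warns explicitly that $\Gamma_\alpha(J)$ is \emph{not} the homothecy conjugate of $\Gamma_\alpha$), so the relevant constant is $C_{\beta_t}(\mathcal G)$ for a $t$-dependent tuple $\beta_t$. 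This defect is reparable, since $\Delta_{2^t\beta}\subset\Delta_{2\beta}$ for all $t\ge 1$. Second, and fatally, the rational IET $p$ of Proposition \ref{Cpf} has breakpoints $b_i'$ chosen within $\tfrac{\epsilon}{2m}$ of the $b_i$ with $\epsilon<2^{-t-1}$: as $t$ grows these rationals have unbounded denominators, so the enlarged tuple $\alpha'=\alpha'_t$ genuinely changes with $t$, and partial uniform perfectness gives finiteness of each $C_{\alpha'_t}(\mathcal G)$ but no bound uniform in $t$. Hence $\frac{C_{\alpha'_t}(\mathcal G)}{2^t}$ need not tend to $0$ and the conclusion $c_{\mathcal G}(f)\le 5$ does not follow. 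You correctly flagged the arithmetic bookkeeping as the delicate point, but located the difficulty in the wrong place; repairing the argument requires replacing the limiting procedure by a fixed-scale self-improvement inside $\Gamma_\alpha$, which is precisely what Proposition \ref{invBIET} and the paper's proof accomplish.
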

A consequence of Theorems \ref{thGm} and \ref{th4} is
\begin{coro}
If $\mathcal G$ has partial uniform perfectness  then $c(\mathcal G)\leq 5$.
\end{coro}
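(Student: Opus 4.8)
The plan is to deduce the corollary by combining the two theorems along a natural case split on whether the IET is periodic or not. The key observation, already recorded in the excerpt, is that any $g\in\mathcal G$ is either periodic, in which case it belongs to $[\mathcal G,\mathcal G]$ trivially and in fact $c_{\mathcal G}(g)=1$ by Proposition \ref{th2}, or else its length vector has an irrational coordinate, so after a permutation of coordinates it lies in $\Gamma_\alpha$ for a suitable $\alpha=(\alpha_1,\dots,\alpha_p)$ with $\alpha_1\notin\mathbb Q$. So the first step is to observe that partial uniform perfectness bounds $c_{\mathcal G}$ on $\bigcup_\alpha\Gamma_\alpha$, and periodic elements are handled separately with bound $1$; since $1\le 5$, this already shows $\mathcal G$ is uniformly perfect. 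That is Theorem \ref{th4}.

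The second step is to upgrade this to the explicit bound $5$ by invoking Theorem \ref{thGm} rather than re-running the Dennis--Vaserstein machinery. The point is that Theorem \ref{thGm} gives $c(\mathcal G)\le 5$ under the hypothesis that each $C_m(\mathcal G)$ is finite, i.e.\ that commutator lengths are bounded on $[\mathcal G,\mathcal G]\cap\mathcal G_m$ for every fixed $m$. So I would show: partial uniform perfectness implies that each $C_m(\mathcal G)$ is finite. This is where the case split matters again. Given $g\in[\mathcal G,\mathcal G]\cap\mathcal G_m$, either $g$ is periodic, whence $c_{\mathcal G}(g)=1$, or $g$ has an irrational length and lies in $\Gamma_\alpha$ for some $\alpha\in[0,1)^p$; but now $p\le m$ since the number of distinct lengths is at most $m$, and one would like a bound independent of $\alpha$.

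The step I expect to be the main obstacle is precisely that last uniformity: $C_\alpha(\mathcal G)$ is finite for each individual $\alpha$ by hypothesis, but a priori it could blow up as $\alpha$ ranges over the (uncountably many) tuples giving IET with at most $m$ discontinuities, so one cannot immediately conclude $C_m(\mathcal G)<\infty$. The resolution I would pursue is to avoid Theorem \ref{thGm} for the bound altogether and instead re-examine the proof of Theorem \ref{th4}: the constant there should come out to be $5$ directly, exactly as in the proofs of Theorems \ref{thG} and \ref{thGm}, because the conjugations by the periodic maps $p,p'$ from Proposition \ref{Cpf} cost $2$, Proposition \ref{Vasit} contributes the additive $3$, and the $\frac{1}{2^t}C_\alpha(\mathcal G)$ term is killed by letting $t\to\infty$ — provided that the auxiliary maps preserve the arithmetic, so that $C_\alpha(\mathcal G)$ (for a possibly larger but still fixed $\alpha$, enlarged to include the rational break data of the $R_i$) is the relevant finite quantity. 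So the corollary follows by tracking that the constant in Theorem \ref{th4}'s proof is $5$, using Properties \ref{CJN} to check that the conjugated elements $g'$ stay inside a fixed $\Gamma_\alpha(J)$ up to rational enlargement of $\alpha$, and using Proposition \ref{th2} for the periodic case.
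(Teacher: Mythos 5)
Your first step is fine: Theorem \ref{th4} gives uniform perfectness, and periodic elements are handled by Proposition \ref{th2}. But your second step goes wrong exactly where you abandon Theorem \ref{thGm}. The worry that $C_\alpha(\mathcal G)$ might blow up as $\alpha$ varies over uncountably many tuples is moot, because the \emph{conclusion} of Theorem \ref{th4} is that $c(\mathcal G)=\sup\{c_{\mathcal G}(g):g\in[\mathcal G,\mathcal G]\}$ is finite, and this single finite number dominates every $C_m(\mathcal G)$ simply because $[\mathcal G,\mathcal G]\cap\mathcal G_m\subset[\mathcal G,\mathcal G]$. Hence the hypothesis of Theorem \ref{thGm} is automatically satisfied once Theorem \ref{th4} is in hand (equivalently, Theorem \ref{th3} applies verbatim), and $c(\mathcal G)\leq 5$ follows in two lines. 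That is the paper's proof: the corollary is introduced as a consequence of Theorems \ref{thGm} and \ref{th4}, nothing more.

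The alternative you propose instead --- rereading the proof of Theorem \ref{th4} and asserting that ``the constant should come out to be $5$ directly'' --- does not hold up as sketched. That proof runs the Dennis--Vaserstein step only once, at the scale of $H_2$, and closes via the self-referential inequality $C_\alpha(\mathcal G)\leq 4+\frac{1}{2}C_\alpha(\mathcal G)+\frac{3}{2}$, which yields $11$, not $5$. To reach $5$ you would need to iterate down to $H_{2^t}$ and let $t\to\infty$, and there your sketch has a genuine gap: the bound on $c_{H_{2^t}}(g')$ would have to come from partial uniform perfectness applied inside $H_{2^t}\cong\mathcal G_{[1-2^{-t},1)}$, and, as the paper points out when introducing $\Gamma_{\alpha}(J)$, this set is \emph{not} the homothety conjugate of $\Gamma_{\alpha}$; after rescaling, the relevant arithmetic class depends on $t$, so the constant you would be dividing by $2^t$ is $C_{\alpha(t)}(\mathcal G)$ with $\alpha(t)$ varying, and nothing in the hypothesis forces $\frac{1}{2^t}C_{\alpha(t)}(\mathcal G)\to 0$. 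The clean way to kill the $\frac{1}{2^t}$ term is precisely to first secure the single $t$-independent bound $c(\mathcal G)<\infty$ from Theorem \ref{th4} and then feed it into Theorem \ref{thGm} --- i.e., the route you talked yourself out of.
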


The main tool for the proof of Theorem \ref{th4} is 

\begin{prop} \label{invBIET} Let $n$ be a positive integer and set $s_n=[\frac{ln(n)} { ln(1.25)}]+1$. Let $f \in \Gamma_{\alpha}$.  

Then there exist $g_n \in H_n \cap \Gamma_{\alpha}$, a map $h \in \Gamma_{\alpha}$ and $s_n$ involutions $i_j \in\Gamma_{\alpha}$, $j=1,2,...s_n$ such that $f=i_{1} \circ ...  \circ i_{s_n} \circ (h \circ g _n\circ h^{-1})$.
\end{prop}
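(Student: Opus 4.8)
The plan is to drive the measure of $\supp(f)$ below $\tfrac1n$ by composing $f$ on the left with $s_n$ involutions of $\Gamma_{\alpha}$, each one multiplying the measure of the support by at most $\tfrac45$, and then to conjugate the outcome into $H_n$ by a single element of $\Gamma_{\alpha}$. Since $(\tfrac45)^{s_n}\leq\tfrac1n$ by the choice of $s_n$ (take logarithms and use $s_n\geq \ln n/\ln 1.25$), writing $f_0=f$, $f_k=i_k\circ f_{k-1}$ and $f_{s_n}=h\circ g_n\circ h^{-1}$ with $g_n$ supported in $[1-\tfrac1n,1)$ yields $f=i_1\circ\cdots\circ i_{s_n}\circ(h\circ g_n\circ h^{-1})$, because the $i_k$ are involutions.

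The heart of the matter is the one-step reduction: \emph{for every $g\in\Gamma_{\alpha}$ there is an involution $i\in\Gamma_{\alpha}$ with $\vert\supp(i\circ g)\vert\leq\tfrac45\,\vert\supp(g)\vert$.} Conjugating $g$ by a suitable element of $\Gamma_{\alpha}$ --- available from Lemma \ref{Lemsup} and legitimate by Property \ref{CJN}(3), and which only replaces $i$ by a conjugate involution still lying in $\Gamma_{\alpha}$ --- one may assume $\supp(g)$ is a single interval $[1-\mu,1)$. The involution $i$ is then taken to be a product of pairwise disjoint transpositions of pairs of equal-length half-open intervals, inserted inside the continuity intervals $I_j$ of $g$. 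The key input is that whenever the translation $\delta_j$ of $g$ on $I_j$ is nonzero, not only $I_j$ but also $g(I_j)$ lies in $\supp(g)$: indeed $g(I_j)$ is a continuity interval of $g^{-1}$ on which $g^{-1}$ translates by $-\delta_j\neq 0$, hence it lies in $\supp(g^{-1})=\supp(g)$. Consequently one may place transpositions with both blocks inside $I_j\cup g(I_j)$ without creating new moved points, and arrange that $i\circ g$ equals the identity on a definite proportion of each $I_j$; summing over $j$ produces the factor $\tfrac45$. All breakpoints used are endpoints of the $I_j$ or of the form $a_j+k\delta_j$ with $a_j,\delta_j\in\Delta_{\alpha}$, so $i\in\Gamma_{\alpha}$, and then $i\circ g\in\Gamma_{\alpha}$ because $\Gamma_{\alpha}$ is a group (Properties \ref{CJN}).

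Iterating this $s_n$ times inside $\Gamma_{\alpha}$ yields $f_{s_n}=i_{s_n}\circ\cdots\circ i_1\circ f\in\Gamma_{\alpha}$ with $\vert\supp(f_{s_n})\vert\leq(\tfrac45)^{s_n}\leq\tfrac1n$. Since $f_{s_n}\in\Gamma_{\alpha}$, the endpoints of the components of $\fix(f_{s_n})$ lie in $\Delta_{\alpha}$, so the conjugating map $h'$ furnished by Lemma \ref{Lemsup} applied to $f_{s_n}$ belongs to $\Gamma_{\alpha}$ and $\supp(h'\circ f_{s_n}\circ h'^{-1})\subset[\,1-\vert\supp(f_{s_n})\vert,\,1)\subset[1-\tfrac1n,1)$. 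Setting $g_n=h'\circ f_{s_n}\circ h'^{-1}\in H_n\cap\Gamma_{\alpha}$ and $h=h'^{-1}\in\Gamma_{\alpha}$, we get $f_{s_n}=h\circ g_n\circ h^{-1}$, hence $f=i_1\circ\cdots\circ i_{s_n}\circ(h\circ g_n\circ h^{-1})$, as required.

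The main obstacle is the one-step reduction, and precisely the uniform constant $\tfrac45$: the block transpositions placed in the various continuity intervals must be simultaneously pairwise disjoint (so that their product is an involution) and must fix at least one fifth of the support whatever the sizes of the translations relative to the interval lengths; the extreme configurations --- all $\delta_j$ very small, or all of them very close to $\vert I_j\vert$ --- are exactly what force the preliminary normalization to interval support and a careful choice of the block sizes. The remaining points, namely that all auxiliary maps stay in $\Gamma_{\alpha}$ by Properties \ref{CJN} and the logarithmic count giving $s_n$, are routine.
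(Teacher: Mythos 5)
Your outer architecture is exactly the paper's: iterate a one‑involution support‑shrinking step $s_n$ times, then push the small‑support remainder into $H_n$ by a $\Gamma_{\alpha}$‑adapted version of Lemma \ref{Lemsup}. The assembly $f=i_1\circ\cdots\circ i_{s_n}\circ(h\circ g_n\circ h^{-1})$, the logarithmic count giving $s_n$, the observation that $g(I_j)\subset \text{supp}(g)$ when $\delta_j\neq 0$, and the $\Delta_{\alpha}$‑bookkeeping via Properties \ref{CJN} are all correct and match the paper.

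The gap is precisely at the point you yourself flag as ``the main obstacle'': the one‑step reduction is asserted, not proved. What is actually needed is a set $A\subset \text{supp}(g)$, compatible with the continuity structure and with endpoints in $\Delta_{\alpha}$, such that $A\cap g(A)=\emptyset$ and $\vert A\vert\geq \tfrac15\vert \text{supp}(g)\vert$ (up to an $\epsilon$); then $i$ is the product of the transpositions of the blocks of $A$ with their images. Your sketch organizes the transpositions continuity interval by continuity interval and claims that ``summing over $j$ produces the factor $\tfrac45$'', but the whole difficulty is the interaction \emph{between} different $j$: the sets $g(I_j)$ tile the support, so the blocks you reserve inside $g(I_j)$ sit inside other continuity intervals $I_k$ and compete with the blocks reserved there (already for a cyclic permutation of three equal intervals the best achievable is $\vert A\vert=\tfrac13\vert \text{supp}(g)\vert$, attained by a global choice, not by fixing a fixed fraction of each $I_j$). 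No argument is given for why the constant $\tfrac15$ is always attainable, and the per‑interval framing does not supply one. The paper's Lemma \ref{Lem4.2} fills this by a different device: after normalizing the support to an interval, subdivide it into $n$ intervals of one common length $\delta\in\Delta_{\alpha}$ with $\delta<\min_x\vert g(x)-x\vert$ (so each $\delta$‑interval is displaced off itself), and select $\delta$‑intervals greedily; each selection forbids at most five candidates (itself, the at most two meeting its image, the at most two meeting its preimage), so at least $n/5$ pairwise disjoint transpositions can be placed. The leftover pieces of length $<\delta$ cost an $\epsilon$, so the per‑step factor is $\tfrac{4+\epsilon}{5}$ rather than $\tfrac45$; the same $s_n$ still works, but only after checking that $\bigl[\ln n/\ln\tfrac{5}{4+\epsilon}\bigr]=\bigl[\ln n/\ln\tfrac54\bigr]$ for $\epsilon$ small. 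Until a one‑step lemma with some explicit uniform constant is actually proved, the proposition is not established.
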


For proving this proposition we use the following

\begin{lemm}\label{Lem4.2} 
Let $\epsilon \in (0, 1)$ and $J$ be a half-open interval with endpoints in $\Delta_\alpha$. 

If $f \in \Gamma_{\alpha}(J)$ then there exists an involution $i\in \Gamma_{\alpha}(J)$ such that 
$$\vert Fix(i \circ f) \vert \geq  \vert Fix(f) \vert+ \frac{\vert J \vert -\vert Fix f \vert}{5} \ (1-\epsilon).$$
\end{lemm}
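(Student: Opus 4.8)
\textbf{Proof plan for Lemma \ref{Lem4.2}.}

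The plan is to produce the involution $i$ by finding a suitable subinterval on which $f$ acts ``efficiently'' and reflecting it. More precisely, I would first reduce to the case where $f$ has no fixed points in the interior of $J$: write $I\setminus \fix(f)$ (inside $J$) as a disjoint union of maximal intervals $F_1,\dots,F_q$ with endpoints in $\Delta_\alpha$ (using Properties \ref{CJN}(3)); since $f$ permutes these components, the total ``movable'' length is $|J|-|\fix(f)|$, and it suffices to gain a definite fraction $\frac{1-\epsilon}{5}$ of the movable length by a single involution supported on one component. So assume $f$ moves all of $J'=[c,d)$ (one of the $F_k$), with $|J'|=\ell$, and all relevant data in $\Delta_\alpha$. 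Let $I_1,\dots,I_m$ be the continuity intervals of $f|_{J'}$, with translations $\delta_i\in\Delta_\alpha$.

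The core construction is as follows: pick the continuity interval $I_{i_0}=[u,u+\lambda)$ of $f|_{J'}$ whose image $f(I_{i_0})=[v,v+\lambda)$ is ``as far as possible''; concretely, I want a pair of disjoint intervals $A=[a,a+r)$ and $B=f(A)=[b,b+r)$ with $A,B\subset J'$, $A\cap B=\emptyset$, $r\in\Delta_\alpha$, and $r$ as large as we can force it to be — the claim being that one can always arrange $r \geq \frac{1}{5}(\ell - (\text{a small error controlled by }\epsilon))$. The $1/5$ should come from a pigeonhole/averaging argument: among the $m$ pieces, consider how far points travel; if \emph{every} piece moved by less than $\ell/5$ one derives a contradiction with $f$ being a bijection of $J'$ with no fixed interval (roughly: the orbit of a point would have to ``drift'' and wrap around, forcing a return, hence a large displacement somewhere, or forcing the permutation to be close to a rotation which itself has a large displacement block). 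Then set $i = \mathcal I_{[a,b+r]}$ restricted appropriately, or better, the involution swapping $A$ and $B$ by an orientation-preserving symmetry of $A\cup B$ — but since we must stay in $\mathcal G$ (not $\overline{\mathcal G}$), $i$ should be the \emph{restricted rotation by $r$} on $A\cup B$, i.e. the involution $R_{r,A\sqcup B}$ that exchanges the two halves; this lies in $\Gamma_\alpha(J)$ because $a,b,r\in\Delta_\alpha$. Then $i\circ f$ is the identity on $A$ (since $i$ sends $f(A)=B$ back onto $A$ undoing the translation), while $\fix(i\circ f)\supset \fix(f)\cup A$ up to the boundary error, giving the claimed gain $\frac{|J|-|\fix f|}{5}(1-\epsilon)$; the factor $(1-\epsilon)$ absorbs the rational/combinatorial rounding needed to keep everything inside $\Delta_\alpha$ and to handle the fact that the extremal $r$ may only be \emph{approached}.

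I would carry out the steps in this order: (1) reduce to a single $f$-invariant interval $J'$ with no interior fixed points and all data in $\Delta_\alpha$; (2) prove the pigeonhole estimate giving a translated pair $(A,f(A))$ of disjoint subintervals with $|A|\geq \frac{1}{5}|J'|(1-\epsilon)$, with endpoints in $\Delta_\alpha$; (3) define the involution $i$ as the restricted ``flip-free'' rotation exchanging $A$ and $f(A)$, check $i\in\Gamma_\alpha(J)$ and $i^2=\id$; (4) verify $\fix(i\circ f)\supseteq \fix(f)\sqcup A$ and sum over the components $F_k$ to conclude. The main obstacle is step (2): getting the constant $1/5$ rather than something worse, and doing so while respecting the arithmetic constraint $BP\subset\Delta_\alpha$ (which is why $\epsilon$ appears — we cannot in general hit the optimal interval exactly, only approximate it within $\Delta_\alpha$, which is dense). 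A secondary subtlety is ensuring $A$ and $f(A)$ can be taken \emph{disjoint}: if the best pair overlaps one shrinks $A$ to its part outside the overlap, which costs at most a factor that must be shown compatible with the $1/5$ bound — this is presumably where the precise combinatorial case analysis (and possibly the choice of constant $1.25 = 5/4$ appearing in Proposition \ref{invBIET}) gets pinned down.
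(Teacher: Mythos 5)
Your step (2) --- the claim that one can always find a single continuity interval $A$ with $A\cap f(A)=\emptyset$ and $|A|\geq \frac{1}{5}|J'|(1-\epsilon)$ --- is false, and no pigeonhole on displacements can rescue it. Take $f$ to be the rotation $R_\alpha$ on $J'=[0,1)$ with $\alpha$ small and irrational: its two continuity intervals are $[0,1-\alpha)$ (translated by $\alpha$) and $[1-\alpha,1)$ (translated by $\alpha-1$), and for any subinterval $A$ of either piece with $f(A)\cap A=\emptyset$ one needs $|A|\leq\alpha$. So the largest swappable pair has length $\alpha$, arbitrarily small compared to $|J'|=1$, even though $f$ has no fixed points and one of its pieces has displacement of absolute value close to $1$ (that piece is itself short). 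The quantity your construction controls is $\max_i \min(\lambda_i,|\delta_i|)$, and this admits no lower bound of the form $c\,|J'|$; hence a single interval-swap involution cannot produce the required gain, and your ``main obstacle'' is in fact an insurmountable one for this route.

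The paper gets the $1/5$ from a different mechanism. It fixes $\delta\in\Delta_\alpha$ smaller than $\min_x|f(x)-x|$ (and than $|J|\epsilon/m$), chops $J$ into $n$ intervals $I_j$ of length exactly $\delta$ plus at most $m$ leftovers of length $<\delta$ on which $f$ is continuous, so that automatically $f(I_j)\cap I_j=\emptyset$ for \emph{every} $j$. It then greedily selects indices $p_1,p_2,\dots$ so that the swap involutions $i_j$ (each equal to $f$ on $I_{p_j}$ and supported on $I_{p_j}\cup f(I_{p_j})$) have pairwise disjoint supports; each chosen $I_{p_j}$ rules out at most $5$ of the $n$ intervals ($I_{p_j}$ itself plus the $\leq 2$ intervals meeting $f(I_{p_j})$ and the $\leq 2$ meeting $f^{-1}(I_{p_j})$), so one obtains $s\geq n/5$ of them. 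The single involution of the lemma is the product $i=i_s\circ\cdots\circ i_1$ of these disjointly supported swaps, and $(1-\epsilon)$ only absorbs the leftover intervals, not any approximation of an extremal pair. A secondary flaw in your step (1): the connected components of $J\setminus \fix(f)$ need not be $f$-invariant (f can permute them), so you cannot restrict $f$ to one of them; the paper instead conjugates $f$ so that $\fix(f)$ becomes an initial subinterval and works on its genuinely invariant complement.
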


\begin{proof}  Let $f \in \Gamma_{\alpha}(J)$ and $BP(f)=\{0=a_1,\cdots,a_m\}$, we set $a_{m+1}=1$.

\noindent \textbf{Case 1:} $Fix(f) =\emptyset$.

Fix $\delta\in \Delta_{\alpha}$ such that $0<\delta < \min\{\frac{\vert J \vert \epsilon}{m}\ ; \ \vert f(x)-x\vert, x\in I\}$.

For every $j\in\{1,\cdots, m-1\}$, we consider the unique integer $n_j$ such that $(n_j-1)\delta \leq \vert [a_j, a_{j+1})\vert < n_j \delta$. It holds that  $[a_j, a_{j+1})$ is  the union of $(n_j-1)$ intervals of length $\delta$ and an eventually empty interval $F_j$ of length less than $\delta$.
%\textcolor{purple}{Je suis OK je change}

\medskip

Therefore $J$ can be decomposed as a finite union of pairwise disjoint half-open intervals $I_1, \cdots ,I_n$ and $F_1, \cdots ,F_{m}$ such that 
\begin{itemize}
\item  $f$ is continuous on these intervals, 
\item  $\vert I_j\vert=\delta$ for $j=1,...,n$ and $\vert F_{k} \vert<\delta $ for  $k=1,...,m$.
\end{itemize}
It follows that $n\delta + \sum\vert F_{k} \vert=\vert J \vert$.

Since for any $x$ it holds that $\vert f(x)-x\vert >\delta$, one has $f(I_j) \cap I_j =\emptyset$.
Therefore there exists an involution $i_1$ of support $I_1 \cup f(I_1)$ such that $i_1\vert_{I_1}=f\vert_{I_1}$ and then $i_1\circ f\vert_{I_1}= Id\vert_{I_1}$.

\smallskip

Now, we want to construct a similar involution $i_2$ on a second interval $I_{p_2}$ so that $i_1$ and $i_2$ have disjoint supports. This can be done if and only if $(I_{p_2}\cup f(I_{p_2}))\cap (I_1 \cup f(I_1))=\emptyset$. This means that 
$$I_{p_2} \subset I \setminus (I_1 \cup f(I_1)\cup f^{-1}(I_1)).$$

Consequently, such an interval $I_{p_2}$ and its corresponding involution $i_2$ with support $I_{p_2}\cup f(I_{p_2})$ exist provided that
$$(1) \ \ \ I' \setminus (I_1 \cup D(f(I_1)\cup f^{-1}(I_1))\not= \emptyset,$$
where $I'= I\setminus \cup F_k$ and $\displaystyle D(K) = \bigcup_{\{k \ \vert \  K\cap I_k \not= \emptyset\}}I_k$.

\smallskip

As any half-open interval of length $\delta$ meets at most two intervals $I_k$, the condition (1) means that $n>5$.

\medskip

By induction, we can define $s$ involutions $i_j$ with disjoint supports $I_{p_j}\cup f(I_{p_j})$ provided that 
$$I' \setminus (I_1\cup \cdots I_{p_{s-1}}  \cup D(f(I_1\cup \cdots I_{p_{s-1}}  )\cup f^{-1}(I_1\cup \cdots I_{p_{s-1}} ))\not= \emptyset.$$ That is $n > 5(s-1).$

\smallskip

Let $s$ be the largest integer such that $n >5(s-1)$, we can construct the involutions $i_j$, $j=1, ... , s$ but $n\leq 5s.$

\smallskip

By the definition of $i_j$, the map $g=i_s \cdots i_1 \circ f$ satisfies $$g\vert_{I_1 \cup I_{p_2} \cup \cdots \cup I_{p_s}}= Id\vert_{I_1 \cup I_{p_2} \cup \cdots \cup I_{p_s}}, {\text { then }}$$ 
$$\vert Fix(g)\vert \geq \sum_{j=1}^{s} \vert I_{p_j} \vert = s. \delta = \frac{s}{n}(\vert J \vert -\sum\vert F_k \vert)\geq \frac{1}{5} (\vert J \vert-\sum\vert F_k \vert)$$ $$ \geq \frac{1}{5}(\vert J \vert-m\delta) \geq\frac{1}{5}(\vert J \vert-\vert J \vert\epsilon).$$ 

In conclusion, since $i_j$ have disjoint supports, the map $i=i_s \cdots i_1 $ is an involution and $\vert Fix(i\circ f) \vert \geq \frac{\vert J \vert}{5}(1-\epsilon)$, this is the desired conclusion for $\vert Fix f \vert =0$.

\smallskip
It remains to prove that $i\in\Gamma_{\alpha}$. Since the $a_i$ and $\delta$ are in $\Delta_{\alpha}$, the endpoints of $I_i$ and $f(I_i)$  also belong to  $\Delta_{\alpha}$. Combining this with the fact that the discontinuity points of the involutions $i_j$ are endpoints of $I_i$ or $f(I_i)$, we get that $BP(i_j)\subset \Delta_{\alpha}$, for $j=1,...,s$. Therefore, by definition, the maps $i_j \in \Gamma_{\alpha}$ then so does $i$.

\bigskip

\noindent \textbf{Case 2:} $Fix(f) \not=\emptyset$.  We set $J=[c,d)$.

\smallskip

As the endpoints of the connected components of $Fix(f)$ belong to $\Delta_{\alpha}$, it holds that $a= \vert Fix(f)\vert \in\Delta_{\alpha}$. Therefore a slight adaptation of Lemma  \ref{Lemsup} to $f\in \Gamma_{\alpha}(J)$, shows that there exists $h \in\Gamma_{\alpha}(J)$ such that $Fix(h \circ f\circ h^{-1})=[c,c+a)$.

\smallskip

Let $f_1\in \Gamma_{\alpha}([c+a,d))$ be the restriction of $h \circ f\circ h^{-1}$ to $[c+a,d)$. By construction, $Fix(f_1) =\emptyset$ hence Case 1 applies to $f_1$ and provides an involution $j_1\in \Gamma_{\alpha}([c+a,d))$ such that $$\vert Fix(j_1\circ f_1) \vert \geq \frac{\vert J \vert-a}{5}(1-\epsilon).$$

Let $j\in \Gamma_{\alpha}(J)$ be the involution of $J$ defined by $j(x) =j_1(x)$ if  $x\in [c+a,d)$ and $j(x)=x$ if $x\in [c,c+a)$. We have $$\vert Fix(j\circ h \circ f\circ h^{-1}) \vert \geq  \vert [c,c+a) \vert + \vert Fix(j_1\circ f_1) \vert  \geq  a + \frac{\vert J \vert-a}{5}(1-\epsilon).$$ 
 
In addition, as $h$ preserves lengths, we have $$\vert Fix( j \circ h \circ f\circ h^{-1}) \vert =\vert Fix (h^{-1} \circ  (j \circ h \circ f\circ h^{-1}) \circ h ) \vert= \vert Fix ( (h^{-1} \circ j \circ h) \circ f)\vert.$$

Setting $i=h^{-1} \circ j \circ h$, we get $$\vert Fix(i\circ f)\vert\geq a+ \frac{\vert J \vert-a}{5}(1-\epsilon),$$ which completes the proof. \end{proof}
 %\textcolor{red}{It is easily seen that the general conclusion of Lemma \ref{Lem4.2} follows from the previous one by conjugating through the homothecy that carries $J$ into $(0,1)$.} 

\bigskip

We turn now on to the proof of Proposition \ref{invBIET}.
\medskip

Let $\epsilon\in (0, 1)$ small enough and such that $\frac{1}{5}(1-\epsilon)\in \Delta_{\alpha}$. Consider $f \in \Gamma_{\alpha}$ and set $L_0=\vert Fix(f) \vert$.

\smallskip

Applying Lemma \ref{Lem4.2} to $f$, there exists an involution $i_1\in \Gamma_{\alpha}$ such that
$$\vert Fix(i_1\circ f) \vert \geq L_0+\frac{1-L_0}{5}( 1-\epsilon) =\phi(L_0) :=L_1,$$ \noindent \text{where} $\phi(x) := x+\frac{1-x}{5}( 1-\epsilon) = \frac{4+\epsilon}{5}( x-1) +1$ is a direct affine map whose fixed point is $1$.

\medskip

We now apply this argument again, with $f$ replaced by $i_1\circ f$, to obtain an involution $i_2\in \Gamma_{\alpha}$ such that$$\vert Fix(i_2 \circ i_1\circ f) \vert \geq\vert Fix(i_1\circ f)\vert +\frac{1-\vert Fix(i_1\circ f)\vert}{5}( 1-\epsilon) =\phi(\vert Fix(i_1\circ f)\vert)\geq \phi(L_1)=\phi^2(L_0).$$

Repeating this process $s$ times, we get $s$ involutions $i_k\in \Gamma_{\alpha}$ such that
$$\vert Fix(i_s\circ \cdots \circ i_1\circ f) \vert \geq \phi^s(L_0).$$

We now prove that $\phi^s(L_0)\geq 1-\frac{1}{n}$ provided that $s \geq s_n = \left[ \frac{ln (n)}{ln (1.25) }\right] +1$.  In order to get this inequality, we are looking for integers $s$ such that 
$$\phi^s (L_0) =  (\frac{4+\epsilon}{5} )^s( L_0-1) +1 \geq 1-\frac{1}{n} ,$$ that is 
$$ - (\frac{4+\epsilon}{5} )^s( 1-L_0) \geq -\frac{1}{n}.$$
Using that $L_0\geq 0$, it suffices to determine $s$ satisfying  
$$ (\frac{4+\epsilon}{5} )^s\leq  \frac{1}{n} $$
$$ s . ln (\frac{4+\epsilon}{5} )  \leq ln (\frac{1}{n} )= -ln(n) $$
$$ s \geq \frac { ln(n) }{ ln (\frac{5}{4+\epsilon} )} $$ 
Therefore, we can take $s  = \left[\frac { ln(n) }{ ln (\frac{5}{4+\epsilon} )} \right] +1$. In addition, as $\frac { ln(n) }{ ln (\frac{5}{4} )}\notin \mathbb N$, we have $ \left[\frac { ln(n) }{ ln (\frac{5}{4+\epsilon} )} \right] = \left[\frac { ln(n) }{ ln (\frac{5}{4} )} \right] $ for $\epsilon>0$ small enough.

Finally  $i_s\circ \cdots \circ i_1\circ f \in \Gamma_{\alpha}$ has a fixed point set of length at least $1-\frac {1}{n}$ so it is conjugated  to an element of $H_n$ by some $h\in \Gamma_{\alpha}$, by a slight adaptation of Lemma  \ref{Lemsup} to $f\in \Gamma_{\alpha}$. % where $h$ is the composite of the conjugating maps \textcolor{red}{$h_k$ that come implicitly from the proof of Lemma \ref{Lem4.2}} when iterating the process.

\subsubsection{Proof of Theorem \ref{th4}.}

We consider $g_1\in \Gamma_{\alpha} \cap [\mathcal G,\mathcal G]$ that realizes $C_{\alpha}(\mathcal G)$. By Proposition \ref{invBIET} with $n=2$ and thus $s_2 = \left[ \frac{ln (2)}{ln (1.25) } \right] +1 =4$, there exist  $g_2\in H_2\cap \Gamma_{\alpha}$, $h\in  \Gamma_{\alpha}$ and four involutions $i_1, \cdots, i_4$ such that $$g_1 =i_{1} \circ i_{2} \circ i_{3}\circ  i_{4}\circ (h g_2 h^{-1}) .$$
We can now estimate $C_{\alpha}(\mathcal G)$. By Proposition \ref{th2} and the normality of $[\mathcal G,\mathcal G]$,
$$C_{\alpha}(\mathcal G)= c_{\mathcal G} (g_1) \leq 4 + c_{\mathcal G} (g_2). $$
\text{According to Lemma \ref{VaserOpti}, we have}
$$c_{\mathcal G}(g_2) \leq \frac{1}{2} c_{H_2}( g_2) +\frac{3}{2}.$$
Using Remark \ref{rem1}, the group $H_2$ inherits the partial uniform perfectness of $\mathcal G$  and this implies that  for any  $g \in [H_2, H_2] \cap \Gamma_{\frac{\alpha}{2}}$, one has  $c_{ H_2} (g)\leq C_{\alpha}(\mathcal G)$. In particular since $\Gamma_{\alpha}$ is a subgroup of $\Gamma_{\frac{\alpha}{2}}$, we have 
 $c_{H_2}( g_2) \leq C_{\alpha}(\mathcal G)$.  Hence, 
$$C_{\alpha}(\mathcal G)\leq  4 + \frac{1}{2} C_{\alpha}(\mathcal G)+ \frac{3}{2}$$
$$ \frac{1}{2} C_{\alpha}(\mathcal G)\leq  4 + \frac{3}{2}= \frac{11}{2}$$
$$ C_{\alpha}(\mathcal G) \leq  11.$$
Finally, since any IET $g$ is either periodic or it belongs to some $\Gamma_{\alpha}$, we get that $c_{\mathcal G}(g)\leq 11$, for all $g\in [\mathcal G,\mathcal G]$. It means that $\mathcal G$ is uniformly perfect. \hfill $\square$

\begin{appendix}
\section{Note on Involution length}

\bigskip

In this appendix, we prove 
\begin{theo} \label{Th1} \ 

Any element of $\ietf$ is the product of at most $6$ strongly reversible elements in $\ietf$.
\end{theo}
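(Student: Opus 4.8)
The plan is to mimic the structure of the proof of Theorem \ref{thG}, replacing "commutator" by "strongly reversible element" (that is, a product of two involutions, equivalently an element conjugate to its own inverse by an involution) throughout. First I would record the elementary group-theoretic facts I will need: an involution is strongly reversible; if $a$ is reversible via an \emph{involution} then $a$ itself is a product of two involutions; a product of elements with pairwise disjoint supports that are each strongly reversible is strongly reversible (because the involutions conjugating each factor have disjoint supports and commute); and the conjugate of a strongly reversible element is strongly reversible. I would also note the analogue of Properties \ref{PrtyCom}(2): if $k = h a^{-1} h^{-1}$ with $h$ an involution, then $ak$ is a product of two involutions, namely $k = (ha h^{-1}\cdot?)$... more carefully, $a\cdot(ha^{-1}h^{-1}) = (a h) \cdot h$ is a product of the two involutions $ah$ (when $ah$ has order $2$, which happens precisely when $h a^{-1} h^{-1} = a$, not what we want) — so instead I would use: $a\cdot h a^{-1} h^{-1} = \mathcal J h$ where $\mathcal J$ is the involution $a$ conjugated appropriately; the clean statement is that $a (h a^{-1} h^{-1})$ is strongly reversible whenever $h$ is an involution commuting with nothing in particular, with an explicit pair of involutions exhibited as in Lemma \ref{VaserOpti}.

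Next I would prove the involution-length analogue of the iterated Dennis–Vaserstein lemma. In the key step of Lemma \ref{VaserOpti}, writing $g = (c_1\cdots c_p)(c_{p+1}\cdots c_{2p})$ and conjugating the second block by the order-two rotation $R = R_{1/2}$, the element $C = (c'_{p+1}\cdots c'_{2p})^{-1}(c_{p+1}\cdots c_{2p})$ is a product of a word and the $R$-conjugate of its inverse, hence \emph{strongly} reversible since $R$ is an involution; and pairing $c_i$ with $c'_{p+i}$ (disjoint supports) keeps strong reversibility. So if $g \in \overline{H}_2$ is a product of $k$ strongly reversible elements of $\overline{H}_2$, then $g$ is a product of roughly $k/2 + 1$ strongly reversible elements of $\overline{\mathcal G}$ — the same recursion as before — giving, after iterating $t$ times, $\mathrm{sr}_{\overline{\mathcal G}}(g) < \frac{1}{2^t}\mathrm{sr}_{\overline{H}_{2^t}}(g) + 3$ for $g \in \overline{H}_{2^t}$.

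Then I would run the reduction of Theorem \ref{thG} verbatim. For $f \in \mathcal G_m$: Proposition \ref{Cpf} produces periodic $p,p'$ with $g = p\circ f\circ p' \in \mathcal G_{5m}$ of support at most $\frac1{2^t}$; by Lemma \ref{clcoinG} periodic IET are conjugate to products of finite-order restricted rotations with disjoint supports, and each finite-order restricted rotation is strongly reversible (it is conjugate to a finite-order rotation $R_\alpha$, and $R_\alpha = (R_\alpha S_0)\cdot S_0$... more directly $R_\alpha S_\theta R_\alpha^{-1} = S_{\theta+2\alpha}$ shows $R_\alpha$ is conjugate to $R_\alpha^{-1}$ by the involution $S_0$), so $p$ and $p'$ each contribute a bounded number of involutions — I would pin down that a periodic IET is a product of at most $2$ strongly reversible maps (it is strongly reversible, being a square of a reversible map via an involution, or directly via the $S_\theta$ argument), giving $\mathrm{sr}_{\overline{\mathcal G}}(f) \le \mathrm{sr}_{\overline{\mathcal G}}(g) + 4$. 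Lemma \ref{Lemsup} conjugates $g$ into $\overline H_{2^t}$ with $\#BP \le 15m$; Proposition \ref{perf} and the fact that each $\mathcal I_{I_i}$ and each $R_{\alpha, J}$ is already an involution or a product of two involutions in $\overline{H}_{2^t}$ (via Lemma \ref{IRCom}, whose proof writes these as products of two conjugate involutions) give $\mathrm{sr}_{\overline{H}_{2^t}}(g') \le 2(15m-1)$; so $\mathrm{sr}_{\overline{\mathcal G}}(f) < \frac{2(15m-1)}{2^t} + 3 + 4$, and letting $t\to\infty$ yields $\mathrm{sr}_{\overline{\mathcal G}}(f) \le 6$ for $f \in \mathcal G$. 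Finally, for a general $F \in \overline{\mathcal G}$, Lemma \ref{invRR}(1) writes $F = J\cdot g$ with $J$ an involution and $g \in \mathcal G$; a priori this costs one more strongly reversible factor, so one gets $\le 7$ — the bound $6$ in the statement says this last step must be absorbed, e.g. by observing that the involution $J = \prod \mathcal I_{I_i}$ can be merged with the first factor produced for $g$ (both built from symmetries on disjoint-or-nested intervals), or by applying the reduction directly to $F$ rather than peeling off $J$ first.

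The main obstacle I anticipate is exactly this last bookkeeping: showing the involution from Lemma \ref{invRR}(1) does not cost an extra unit, i.e. that one can interleave it into the Dennis–Vaserstein pairing step so that the final count is $6$ and not $7$. The other delicate point is verifying that every auxiliary map introduced along the way — the finite-order restricted rotations $R_i$ from Proposition \ref{Cpf}, the conjugating map $h$ from Lemma \ref{Lemsup}, the rotation $R_{1/2}$ used in the doubling trick — is genuinely an involution or an explicit short product of involutions, and that the strong-reversibility analogue of Properties \ref{PrtyCom}(1)–(2) holds with the conjugators taken to be involutions; these are routine but must be checked with care since "strongly reversible" is more restrictive than "reversible."
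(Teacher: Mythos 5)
Your overall strategy is the paper's: reduce via Proposition \ref{Cpf} and Lemma \ref{Lemsup} to a map conjugate into $\overline H_{2^t}$ with at most $15m$ discontinuities, run an involution-flavoured Dennis--Vaserstein iteration in which the correction term $C=(c'_{p+1}\cdots c'_{2p})^{-1}(c_{p+1}\cdots c_{2p})$ is strongly reversible because $R_{1/2}$ is an involution (Properties \ref{reve}(1d)), pair off factors with disjoint supports, and bound the length inside $\overline H_{2^t}$ by the number of restricted rotations. All of that matches Lemma \ref{DVRevers}, Proposition \ref{DVrevit} and Proposition \ref{prop1}; your eventual formulation of the analogue of Properties \ref{PrtyCom}(2) (after some back-and-forth) is also the right one.

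The gap is in your accounting, and it is not where you locate it. A periodic IET is itself a \emph{single} strongly reversible element (Properties \ref{reve}(2b): the conjugating permutation $\tau$ of \cite{FS} can be taken of order two; equivalently, as you note, each finite-order restricted rotation is a product of two symmetries), so $p$ and $p'$ cost $+2$ in total, not the $+4$ you charge by treating each as a product of two strongly reversible maps. With your $+4$ the inequality reads $\mathrm{sr}_{\overline{\mathcal G}}(f)<\frac{2(15m-1)}{2^t}+7$, which only yields $\mathrm{sr}_{\overline{\mathcal G}}(f)\le 7$ for $f\in\mathcal G$ (your ``$\le 6$'' at that point is an arithmetic slip), hence $\le 8$ for a general FIET. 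With the correct $+2$ one gets $\mathrm{sr}_{\overline{\mathcal G}}(f)<\frac{15m}{2^t}+5$, hence $\le 5$ for $f\in\mathcal G$, and the single involution from Lemma \ref{invRR}(1) then lands exactly on $6$. Consequently the ``main obstacle'' you flag --- absorbing the flip involution into the first factor --- is a red herring: no absorption is needed, and the merging you sketch (interleaving $\prod\mathcal I_{I_i}$ into the pairing step) is not justified as written. The remaining unit confusion, $\mathrm{sr}_{\overline H_{2^t}}(g')\le 2(15m-1)$ instead of $\le 15m-1$ (you count involutions rather than strongly reversible elements), is harmless since that term is divided by $2^t$.
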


\begin{coro} \label{Cor1}  Any element of $\ietf$  is the product of at most $12$ involutions in $\ietf$. \end{coro}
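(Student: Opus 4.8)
The plan is to re-run the proof of Theorem~\ref{thG} with ``commutator'' replaced everywhere by ``strongly reversible element''. Write $\ell(g)$ for the least number of strongly reversible elements of $\overline{\mathcal G}$ whose product is $g$, so the theorem asserts $\ell\le 6$. I would begin by recording that the auxiliary maps appearing below are strongly reversible in $\overline{\mathcal G}$: (i) every involution of $\overline{\mathcal G}$ is strongly reversible, in particular every symmetry $\mathcal I_J$ and every restricted symmetry $S_{\theta,J}$; (ii) conjugating Property~\ref{Sym}(1) by a homothecy as in Remark~\ref{rem1}, every restricted rotation $R_{\alpha,J}$ is the product of two restricted symmetries, hence is strongly reversible; (iii) a product of involutions with pairwise disjoint supports is an involution (they commute), so, grouping the first symmetry factors together and the second ones together, a product of restricted rotations with pairwise disjoint supports is a product of two involutions; combined with Lemma~\ref{clcoinG} this gives that every periodic IET is strongly reversible in $\overline{\mathcal G}$. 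Finally $R_{1/2}$ is an involution, because $R_{1/2}^2=R_1=\mathrm{id}$.

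The core step is the strong-reversibility analogue of Lemma~\ref{VaserOpti}: if $g\in\overline{H}_2$ then $\ell(g)\le\tfrac12\,\ell_{\overline{H}_2}(g)+\tfrac32$. To prove this, write $\ell_{\overline{H}_2}(g)=2p-\rho$ with $\rho\in\{0,1\}$ and $g=(r_1\cdots r_p)(r_{p+1}\cdots r_{2p})$, each $r_i=\sigma_i\tau_i$ a product of two involutions supported in $[\tfrac12,1)$ (the last factor possibly trivial). Put $R=R_{1/2}$ and $r_i'=Rr_iR^{-1}$, whose defining involutions are supported in $[0,\tfrac12)$ and therefore commute with those of the $r_j$. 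Exactly as in Lemma~\ref{VaserOpti} one rewrites $g=\bigl[\prod_{i=1}^p (r_i r'_{p+i})\bigr]\,C$ with $C=(r'_{p+1}\cdots r'_{2p})^{-1}(r_{p+1}\cdots r_{2p})$. Each $r_i r'_{p+i}$ is again a product of two involutions, since the defining involutions of $r_i$ commute with those of $r'_{p+i}$, hence is strongly reversible; and writing $w=r_{p+1}\cdots r_{2p}$ one gets $C=Rw^{-1}R^{-1}w$, which is reversed by the involution $R$ (using $R^{-1}=R$: $RCR^{-1}=w^{-1}RwR=C^{-1}$), hence is strongly reversible. So $\ell(g)\le p+1\le\tfrac12\,\ell_{\overline{H}_2}(g)+\tfrac32$; iterating this exactly as in Proposition~\ref{Vasit} yields $\ell(g)<\tfrac1{2^t}\,\ell_{\overline{H}_{2^t}}(g)+3$ for every $g\in\overline{H}_{2^t}$.

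Then I would copy the final part of the proof of Theorem~\ref{thG}. Given an IET $f\in\mathcal G_m$ and $t\in\mathbb N^*$, Proposition~\ref{Cpf} with $n=2^t$ produces periodic $p,p'\in\mathcal G$ with $g=p\circ f\circ p'\in\mathcal G_{5m}$ and $|supp(g)|\le 2^{-t}$; since $p^{-1},p'^{-1}$ are periodic, fact (iii) gives $\ell(f)\le\ell(g)+2$. By Lemma~\ref{Lemsup}, $g$ is conjugate to some $g'\in H_{2^t}$ with $\#BP(g')\le 15m$, and conjugation preserves $\ell$. Via Remark~\ref{rem1}, $g'$ restricts to an IET of $[1-2^{-t},1)$ with at most $15m$ discontinuity points, which by Lemma~\ref{invRR}(2) is a product of at most $15m-1$ restricted rotations supported in $[1-2^{-t},1)$; by (ii) each of these is strongly reversible in $\overline{H}_{2^t}$, so $\ell_{\overline{H}_{2^t}}(g')\le 15m$. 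Combining, $\ell(f)<2+\tfrac{15m}{2^t}+3$, and letting $t\to\infty$ forces $\ell(f)\le 5$. For arbitrary $F\in\overline{\mathcal G}$, Lemma~\ref{invRR}(1) writes $F=\iota\circ g$ with $\iota$ an involution (strongly reversible by (i)) and $g\in\mathcal G$, so $\ell(F)\le 1+5=6$: this is Theorem~\ref{Th1}. Corollary~\ref{Cor1} is then immediate, since a strongly reversible element $g$ reversed by an involution $\tau$ satisfies $g=(g\tau)\tau$, a product of at most two involutions, so an element of $\overline{\mathcal G}$ is a product of at most $12$ involutions.

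The step I expect to be the genuine obstacle is the strong-reversibility version of Lemma~\ref{VaserOpti}, specifically checking that the leftover factor $C$ is a \emph{single} strongly reversible element and not merely a product of two: this works precisely because $R_{1/2}$ happens to be an involution, in contrast with the commutator computation where $C$ was a commutator for any conjugating element. Everything else --- the discontinuity-point bookkeeping through Proposition~\ref{Cpf} and Lemma~\ref{Lemsup}, and the verification that $\mathcal I_J$, $S_{\theta,J}$, periodic IETs and the conjugators of Lemma~\ref{Lemsup} are strongly reversible \emph{in $\overline{\mathcal G}$} --- should go through as in the proof of Theorem~\ref{thG}.
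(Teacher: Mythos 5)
Your proposal is correct and is essentially the paper's own argument: the appendix establishes Theorem \ref{Th1} by exactly this strong-reversibility adaptation of the Dennis--Vaserstein scheme (Properties \ref{reve}, Lemma \ref{DVRevers}, Proposition \ref{DVrevit}, then the decomposition via Proposition \ref{Cpf} and Lemma \ref{Lemsup}), and Corollary \ref{Cor1} follows, as in your last step, by writing each strongly reversible factor as a product of two involutions. The only (harmless) deviation is that you obtain strong reversibility of periodic IETs in $\overline{\mathcal G}$ from Lemma \ref{clcoinG} and restricted symmetries, whereas the paper constructs a reversing involution directly in $\mathcal G$.
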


\subsection{Preliminairies.} 

\subsubsection{Strong reversibility in $\iet$ and $\ietf$} We list some basic properties on strongly reversible maps.
\begin{propri} \label{reve} \ 
\begin{enumerate}
\item Basic general properties. {Let $G$ be a group and $f_1$, $f_2$, $i_1$, $i_2$, $f$ and $R$ be elements of $G$.}
\begin{enumerate}
\item The identity and involutions are strongly reversible and an element is strongly reversible if and only if it is the product of at most $2$ involutions.
\item The conjugate of a strongly reversible element is strongly reversible. 
\item Let $f_k$ be strongly reversible by $i_k$, for $k=1,2$. If $\{i_1,f_1\}$ and $\{i_2,f_2\}$ are commuting then the product $f_1f_2$ is strongly reversible by $i_2 i_1$.
\item {If $R$ is an involution, then $f R f^{-1} R^{-1}$ is strongly reversible by $R$.}% $f$ has support in $J$ and $R(J)\cap J=\emptyset$ then  $f R f^{-1} R^{-1}$ has support in $J\cup R(J)$ and
\end{enumerate}
\item Specific properties.
\begin{enumerate}
\item Restricted rotations are strongly reversible in $\ietf$.
\item Any periodic element of $\iet$ is strongly reversible in $\iet$. 
\end{enumerate}
\end{enumerate}
\end{propri}

We let the reader check the basic general properties and we only prove the specific ones.

The rotation $R_{\alpha}$ of $[0,1)$ given by $R_{\alpha}(x) = x+\alpha \ (\mo 1 )$ is reversible by the 
symmetry $\I$ defined as $\I(x) =1-x$. Therefore, a restricted rotation of support $J$ is reversible by $\mathcal I_J$, the symmetry of $J$.

\hskip 2 truecm
\includegraphics[width=8cm, height=4cm]{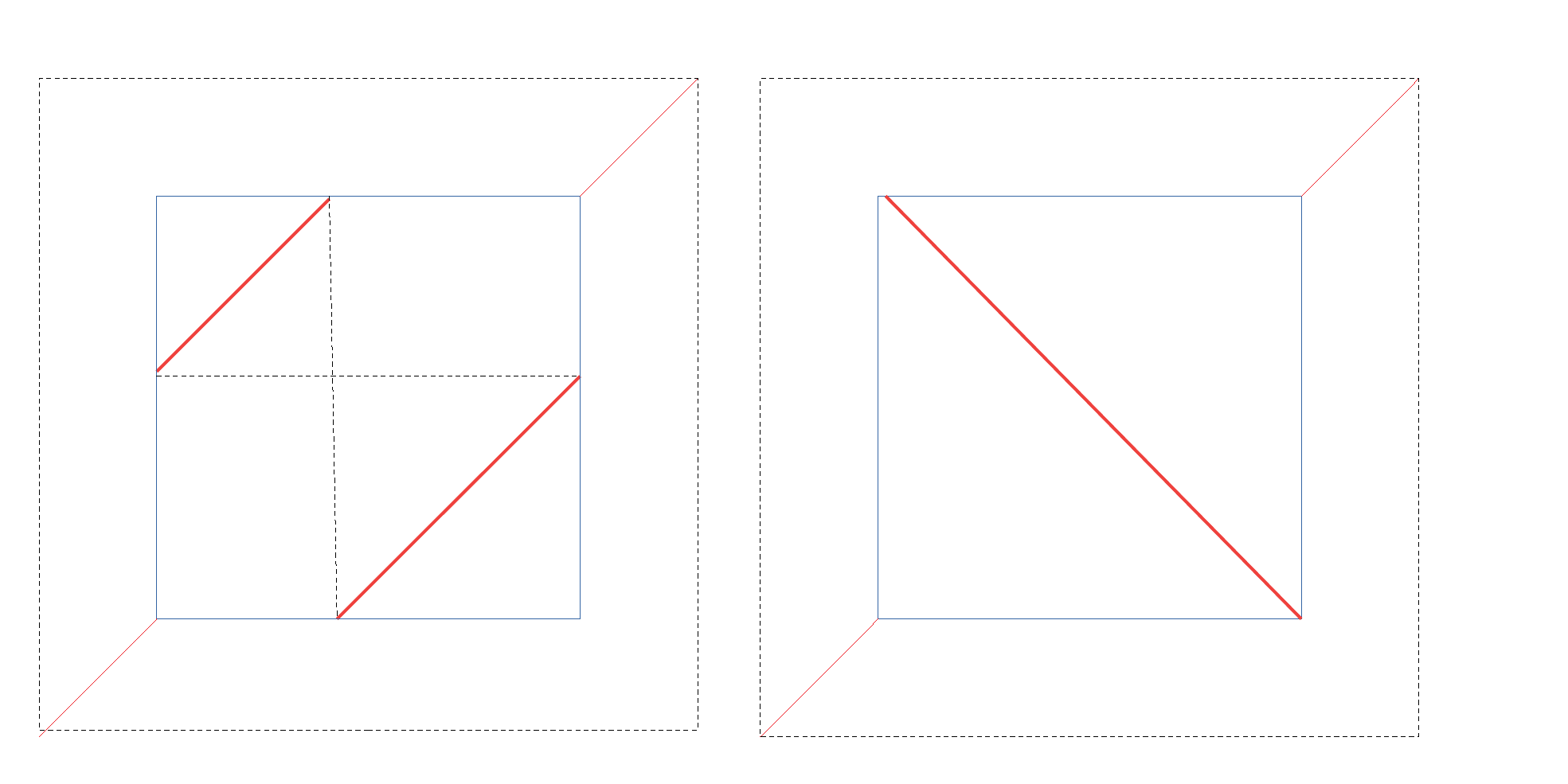} 

\vskip -0,4truecm \ \hskip 3,5truecm  $R_J$ \hskip 3truecm $\mathcal I_J$

It was already proved in \cite{GLIET} that any periodic \ie is strongly reversible. Here, we give a more direct proof.

Let $g$ be a periodic \ie and $p\in \mathbb N^*$ such that $g^p=\id$.  We consider the partition $\p$ of the interval $[0,1)$ into  half-open intervals $J_j$ defined by the discontinuity points of the maps $g$, $g^2$, ..., $g^{p-1}$. In particular, the maps $g$, $g^2$, ..., $g^{p-1}$ are continuous on each $J_j$ and there exists a minimal integer $p_j\leq p$ such that $g^{p_j}_{\vert J_j}=\id$. The partition $\p$ can be decomposed as the union of $\p_k=\{ J_k, ..., g^{p_k-1}(J_k) \}$ for $k\in\rl$,  {where $\rl$ is an index set for representatives of the $g$-orbits of the $J_j$'s}. 

Eventually conjugating $g$ by an \ie, we can suppose that  for all $k\in \rl$, the intervals $J_k$ ,..., $g^{p_k -1}(J_k)$ are consecutive so that their union is an half-open interval denoted by $I_k$. 

Let $\mathcal S_{p_i}$ denote the $p_i$-symmetric group and let $\pi \in \mathcal S_{p_i}$ be the cycle $(1,2,\cdots, p_i)$ that is $\pi(t) =t+1 \ (\mo p_i)$. According to \cite{FS} (Proposition 3.4 page 41), there exists $\tau \in \mathcal S_{p_i}$ such that $\tau$ has order $2$ and it satisfies $\tau^{-1} \pi \tau (k)=\pi^{-1}$.

We consider the \ie $h$, that is defined on each $I_i$ by $h$ is continuous on $g^{t}(J_i)$, $t=0,...,p_i-1$ and $h(g^{t}(J_i)) =g^{\tau(t)}(J_i)$. One can easily check that $h$ is an involution and $h^{-1}\circ g \circ h$ is continuous on $g^{t}(J_i)$ and $h^{-1}\circ g \circ h(g^{t}(J_i)) = h^{-1}\left(g^{\pi \tau(t)}(J_i)\right)=g^{\tau^{-1} \pi \tau(t)}(J_i)=g^{\pi^{-1}(t)}(J_i)=g^{-1}(g^t(J_i))$. Therefore $h^{-1}\circ g \circ h=g^{-1}$, meaning that $g$ is strongly reversible in $\iet$. %J_{\tau^{-1} \pi \tau (k)}=J_{\pi^{-1}(k)}$

\medskip

Lemma 3.2 and \ref{reve} {(2 a)} imply  

\begin{prop}\label{prop1} \

Any element of $\iet_m$ is the product of at most $m-1$ strongly reversible elements in $\ietf$.
\end{prop}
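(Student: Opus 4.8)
The plan is to combine Lemma \ref{invRR}(2) with Properties \ref{reve}(1a), (2a), (2b). Recall that any $g \in \iet_m$ can be written, by Lemma \ref{invRR}(2), as a product of $m-1$ restricted rotations, say $g = R_{m-1} \circ \cdots \circ R_1$, where each $R_i$ has support a union of two consecutive half-open intervals. By Properties \ref{reve}(2a), each restricted rotation $R_i$ is strongly reversible in $\ietf$. Hence $g$ is a product of $m-1$ strongly reversible elements of $\ietf$, which is exactly the assertion.

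More precisely, I would argue as follows. Fix $g \in \iet_{m,\pi}$ with continuity intervals $I_1, \dots, I_m$. Apply the construction in the proof of Lemma \ref{invRR}(2): there are restricted rotations $R_1, \dots, R_{m-1}$ (extended by the identity off their supports) such that $R_{m-1} \circ \cdots \circ R_1 \circ g = \id$, so $g = R_1^{-1} \circ \cdots \circ R_{m-1}^{-1}$. The inverse of a restricted rotation is again a restricted rotation with the same support, so it is strongly reversible in $\ietf$ by Properties \ref{reve}(2a) — indeed reversed by the symmetry $\mathcal I_J$ of its support $J$. Therefore $g$ is a product of $m-1$ strongly reversible elements of $\ietf$, which proves the statement.

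There is really no obstacle here: the work has all been done in Lemma \ref{invRR}(2) (the decomposition into $m-1$ restricted rotations) and in Properties \ref{reve}(2a) (strong reversibility of restricted rotations). The only mild care needed is to confirm that passing to inverses does not change the count or leave the class of restricted rotations — which is immediate since $R_{\alpha,J}^{-1} = R_{-\alpha,J} = R_{|b-a|-\alpha,J}$ has support $J$ and is reversed by $\mathcal I_J$. One could equally phrase the decomposition directly as $g = R_1' \circ \cdots \circ R_{m-1}'$ with $R_i'$ restricted rotations, avoiding inverses altogether; either way the bound $m-1$ is exactly what Lemma \ref{invRR}(2) delivers. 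The real content of the section — reducing the general FIET case and sharpening the constant to $6$ — is postponed to the arguments that follow Theorem \ref{Th1}, and Proposition \ref{prop1} is just the elementary first step feeding into them.
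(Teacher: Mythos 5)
Your proof is correct and follows exactly the route the paper takes: the paper derives Proposition \ref{prop1} in one line from Lemma \ref{invRR}(2) (decomposition of any element of $\iet_m$ into $m-1$ restricted rotations) together with Properties \ref{reve}(2a) (each restricted rotation is strongly reversible in $\ietf$, being reversed by the symmetry of its support). Your extra remark that inverses of restricted rotations are again restricted rotations with the same support is a harmless additional check that does not change the argument.
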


\subsubsection{The Dennis-Vasserstein's criterion for strongly reversible maps.}

\begin{defi} \label{srevlength}
Let $G$ be a simple group and $g\in G$, the strongly reversible length of $g$, denoted by  $\boldsymbol{\rl_{G} (g)}$, is the least number $r$ such that $g$ is a product of $r$ strongly reversible elements in $G$.
\end{defi}

\begin{prop} \label{DVrevit}%{Critère DV itéré.}
Let $n\in \mathbb N^*$. If $g\in {H_{2^n}}$ then  $\rl_{\ietf} (g) < \frac{1}{2^n} \rl_{H_{2^{n}}} (g) + 3.$
\end{prop}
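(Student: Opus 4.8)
The plan is to mimic the iterated Dennis--Vaserstein argument that gave Proposition~\ref{Vasit}, but tracking strongly reversible length instead of commutator length. The engine is a one-step halving lemma: \emph{if $g\in H_2$ then $\rl_{\ietf}(g)\le \frac12 \rl_{H_2}(g)+\frac32$}, which one then iterates exactly as in the proof of Proposition~\ref{Vasit} (the induction $(E_s)$ and the telescoping sum $\sum_{j=1}^{n}2^{-j}=1-2^{-n}<1$ are formally identical, with $c$ replaced by $\rl$ throughout). So the whole content is in proving this halving lemma.

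For the halving lemma, let $g\in H_2$ and write $\rl_{H_2}(g)=2p-r$ with $p\in\mathbb N^*$, $r\in\{0,1\}$, so that $g=(s_1\cdots s_p)(s_{p+1}\cdots s_{2p})$ with each $s_i$ strongly reversible in $H_2$ and $s_{2p}$ possibly trivial. Let $R$ be the rotation by $\frac12$ and write $f'=R f R^{-1}$; since every element of $H_2$ is supported in $[\frac12,1)$ and $R$ carries that interval to $[0,\frac12)$, the maps $f$ and $k'$ commute whenever $f,k\in H_2$. As in Lemma~\ref{VaserOpti} insert $(s'_{p+1}\cdots s'_{2p})^{-1}(s'_{p+1}\cdots s'_{2p})$ in the middle to get
$$g=(s_1 s'_{p+1})\cdots(s_p s'_{2p})\, C,\qquad C=(s'_{p+1}\cdots s'_{2p})^{-1}(s_{p+1}\cdots s_{2p}).$$
Each factor $s_i s'_{p+i}$ is a product of two strongly reversible elements with commuting data (the involutions inverting $s_i$ live in $H_2$, those inverting $s'_{p+i}$ live in $R H_2 R^{-1}$, and these two groups have disjoint supports), so by Properties~\ref{reve}~(1c) each $s_i s'_{p+i}$ is strongly reversible. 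The remaining factor $C$ is of the form $(uR u^{-1}R^{-1})$ with $u=s_{p+1}\cdots s_{2p}$ --- indeed $C= w^{-1}(RwR^{-1})\cdot$ wait, more precisely $C=(RwR^{-1})^{-1}w$ where $w=s_{p+1}\cdots s_{2p}$, i.e.\ $C=R w^{-1} R^{-1} w = R w^{-1} R^{-1} R w R^{-1}\cdot R^{-1}$; the clean statement is that $C$ has the shape $k R k^{-1} R^{-1}$ up to the involution $R$ itself, hence by Properties~\ref{reve}~(1d) $C$ is strongly reversible (if $R$ is taken as an involution, which it is not literally, but $R$ can be replaced by the symmetry $\mathcal I$ of $[0,1)$ composed with the same bookkeeping, or one simply absorbs $R$ as one extra involution). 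Counting: $p$ strongly reversible factors plus one more for $C$, giving $\rl_{\ietf}(g)\le p+1$, whence $2\,\rl_{\ietf}(g)\le 2p+2=\rl_{H_2}(g)+r+2\le \rl_{H_2}(g)+3$.

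The main obstacle I anticipate is the treatment of the leftover factor $C$: unlike the commutator case, where $C$ being ``a product of $w$ and the $R$-conjugate of $w^{-1}$'' is manifestly a commutator, here one needs $C$ to be strongly reversible, and the cleanest route is Properties~\ref{reve}~(1d) applied to $f R f^{-1} R^{-1}$ --- but that requires $R$ (or its chosen replacement) to be an involution of $[0,1)$, whereas the rotation by $\frac12$ has order $2$ only on the relevant half-interval identification and is not an involution of $I$. The fix is to do the swap with the symmetry $\mathcal I_{[0,1)}=\mathcal I$ rather than $R$: one checks that conjugation by $\mathcal I$ still carries $H_2=\mathcal G_{[1/2,1)}$ to a subgroup with support in $[0,1/2)$, so $f$ and $k^{\mathcal I}$ still commute for $f,k\in H_2$, while now $\mathcal I$ is a genuine involution and $C=\mathcal I\, w^{-1}\mathcal I\, w = w'^{-1}w$ is of the form treated by Properties~\ref{reve}~(1d). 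Everything else is the routine induction already carried out for Proposition~\ref{Vasit}, with $H_1=\ietf$ and the geometric series bounded by $1$.
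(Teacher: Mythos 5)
Your proof is correct and follows essentially the same route as the paper: a one-step halving lemma for $H_2$ obtained by pairing each $s_i$ with the commuting conjugate $s'_{p+i}$ (strongly reversible by Properties \ref{reve}(1c)) and absorbing the leftover $C$ via Properties \ref{reve}(1d), then iterating through the chain $H_{2^n}\supset H_{2^{n-1}}\supset\cdots$ exactly as in Proposition \ref{Vasit}. The one obstacle you flag is not actually there: $R_{\frac12}$ \emph{is} an involution of $[0,1)$ as an IET, since $R_{\frac12}^2=R_1=\id$, so Properties \ref{reve}(1d) applies to $C=wR_{\frac12}w^{-1}R_{\frac12}^{-1}$ directly and no replacement by $\mathcal I$ is needed.
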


\begin{lemm}\label{DVRevers} If $g\in H_2$ is the product of $4$ strongly reversible elements in $H_2$ then
$g$ is the product of $3$ strongly reversible elements in $\ietf$.
\end{lemm}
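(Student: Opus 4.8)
The plan is to mimic the commutator-length argument of Lemma \ref{VaserOpti}, but with "product of commutators" replaced by "product of strongly reversible elements", using the conjugating rotation $R=R_{1/2}$ as the tool that moves a copy of $H_2$ onto a disjoint interval. First I would write the hypothesis as $g = s_1 s_2 s_3 s_4$ with each $s_k$ strongly reversible in $H_2$, say $s_k = \sigma_k \tau_k$ with $\sigma_k,\tau_k$ involutions in $H_2$ (using Properties \ref{reve}(1a)). Set $s = s_3 s_4$, so $g = s_1 s_2 s$. Since $H_2$ consists of maps supported in $[\tfrac12,1)$, for any $f,k\in H_2$ the elements $f$ and $k' := RkR^{-1}$ have disjoint supports, hence commute; moreover $R$ is an involution (it is the rotation of angle $\tfrac12$, which is its own inverse).

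Next I would insert the telescoping factor: write
$$g = s_1 s_2 \, s = s_1 s_2 \, (s_3' s_4') (s_3' s_4')^{-1} s = (s_1 s_3')(s_2 s_4') \, C,$$
where $s_k' = R s_k R^{-1}$ and $C = (s_3' s_4')^{-1}(s_3 s_4)$; the regrouping is legitimate because $s_3',s_4'$ commute with $s_1,s_2$ (disjoint supports). Now I claim each of the three factors is strongly reversible in $\ietf$. For $s_1 s_3'$: $s_1$ is strongly reversible by an involution $i_1\in H_2$, and $s_3' = Rs_3R^{-1}$ is strongly reversible by $Ri_3R^{-1}$ (Properties \ref{reve}(1b)), these two data being supported on disjoint intervals hence commuting, so by Properties \ref{reve}(1c) the product $s_1 s_3'$ is strongly reversible in $\ietf$. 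Likewise $s_2 s_4'$ is strongly reversible. For $C$: note $C = R(s_3 s_4)^{-1}R^{-1}\,(s_3 s_4)$; since $R$ is an involution, Properties \ref{reve}(1d) applied with $f = (s_3s_4)^{-1}$ gives that $C = f R f^{-1} R^{-1}$ is strongly reversible (by $R$) — this is the crucial role of $R$ being an involution, which it is. Hence $g$ is a product of three strongly reversible elements of $\ietf$, as desired.

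The main obstacle is bookkeeping about supports and the involutivity of the conjugator: one must be careful that $R = R_{1/2}$ really is an involution so that Properties \ref{reve}(1d) applies verbatim to $C$, and that the conjugates $s_k' = Rs_kR^{-1}$ are supported in $[0,\tfrac12)$, disjoint from the $[\tfrac12,1)$-support of $s_1,s_2$, so that all the claimed commutations (needed both for the regrouping and for invoking Properties \ref{reve}(1c)) are valid. Everything else is a direct transcription of the four-commutator-into-one trick of Lemma \ref{VaserOpti} into the strongly-reversible language, replacing Properties \ref{PrtyCom}(1)(2) by Properties \ref{reve}(1c)(1d).
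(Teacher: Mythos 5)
Your argument is correct and is essentially the paper's own proof: the same telescoping insertion of $(s_3's_4')(s_3's_4')^{-1}$ with $s_k'=R_{1/2}s_kR_{1/2}^{-1}$, the same regrouping into $(s_1s_3')(s_2s_4')C$, and the same appeals to Properties \ref{reve}(1c) for the first two factors and (1d) for $C$. The only (harmless) slip is that your $C=R(s_3s_4)^{-1}R^{-1}(s_3s_4)$ is not literally $fRf^{-1}R^{-1}$ with $f=(s_3s_4)^{-1}$ but rather its inverse; either take $f=R(s_3s_4)^{-1}R^{-1}$ instead, or note that an element is strongly reversible by $R$ if and only if its inverse is.
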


\begin{proof}
Let $g= \rho_1 \rho_2 \rho_3 \rho_4$ with $\rho_i$ strongly reversible elements in $H_2$. Denote by $\rho'_i$ the map $R_{\frac{1}{2}} \rho_i R_{\frac{1}{2}}$ which commutes with the $\rho_j$ and is strongly reversible in $ R_{\frac{1}{2}} H_2  R_{\frac{1}{2}}$. We have
$$g= \rho_1 \rho_2 \rho_3 \rho_4 \ (\rho'_3 \rho'_4) \ (\rho'_3 \rho'_4)^{-1}$$
$$g= \underbrace{( \rho_1  \rho'_3 )}_{C_1} \ \underbrace{(\rho_2 \rho'_4)}_{C_2} \ \underbrace{( \rho_3 \rho_4)\bigl((\rho_3 \rho_4)^{-1}\bigr)'}_{C_3}.$$
According to Properties \ref{reve} $(1)$ $c$ and $d$, the $C_i$ are strongly reversible and the conclusion follows.
\end{proof}

More generally, if $g$ is the product of $2p$ strongly reversible elements in $H_2$ then 
$$g= (\rho_1 ...\rho_{2p}) = (\rho_1 ... \rho_{2p}) \ ( \rho'_{p+1}... \rho'_{2p}) \ (\rho'_{p+1}... \rho'_{2p})^{-1}$$ 
 $$g= \underbrace{(\rho_1  \rho'_{p+1})}_{C_1} \ ...\ \underbrace{(\rho_p \rho'_{2p})}_{C_p} \ \  \underbrace{(\rho_{p+1}... \rho_{2p}) \ ( \rho'_{p+1}... \rho'_{2p})^{-1}}_{C_{p+1}}.$$ 
Therefore $\rl_{\ietf}(g) \leq \frac{1}{2} (\rl_{H_2}(g) + 3)$.

\medskip

Starting from $g\in  {H_{2^n}}$ and iterating this process for the groups $H_{2^t}$ and $H_{2^{t+1}}$, for $t<n$, we get 
$$\rl_{\ietf} (g)  \leq \frac{1}{2} (\rl_{H_2}(g) + 3) \leq \frac{1}{4} (\rl_{H_4}(g) + 9)\leq ... \leq 
\frac{1}{2^n}\left(\rl_{H_{2^{n}}} (g)  + 3(2^n-1)\right).$$
Finally, it holds that
$$\rl_{\ietf} (g) <\frac{1}{2^n} \rl_{H_{2^{n}}} (g) + 3.$$

\subsection{Proof of  Theorem \ref{Th1}}
Let $f\in \iet$ and  $m= \# \disc(f)$. By Proposition 5.1, for all $n\in \mathbb N^*$, there exist $p$, $p'$ periodic, $g\in H_{2^n} \cap \iet_{15m}$ and $h\in \iet$ such that $f=p \circ(h^{-1} \circ g \circ h) \circ p'$. Then 
$$\rl_{\ietf} (f) \ \leq  \rl_{\ietf}(p)+\rl_{\ietf}(h^{-1} \circ g \circ h)+\rl_{\ietf} (p').$$
By Properties \ref{reve}, the maps $p$ and $p'$ are strongly reversible and then
$$\rl_{\ietf} (f) \ \leq \ 2  \ + \rl_{\ietf} (g).$$
Using  Proposition \ref{DVrevit}, we get 
$$\rl_{\ietf} (f) \ < \ 2 \ +  \frac{1}{2^n} \rl_{H_{2^n}} (g) + 3.$$
As $g\in \iet_{15m}$, {Remark \ref{rem1} and Proposition \ref{prop1} imply} that
$$\rl_{\ietf} (f) \  < \  5 \ + \ \frac{15m}{2^n}.$$
Finally, letting $n\rightarrow +\infty$, we get 
$$\rl_{\overline{\mathcal G}} (f) \ \leq \ 5.$$

In Conclusion: let $\overline{f}\in \ietf$. By Lemma \ref{invRR} (1), $\overline{f}= i\circ f$ with $i$ an involution and $f\in \iet_m$ thereby $\rl_{\ietf} (f) \leq 1+5=6.$

\subsection{Proof of Corollary \ref{Cor1}}

Let $f\in \ietf$, by Theorem \ref{Th1}, $f$ can be written as a product of $6$ strongly reversible elements of $\ietf$. By Properties \ref{reve}, these elements are product of $2$ involutions then $f$ is the product of $12$ involutions.

\end{appendix}

\bibliographystyle{alpha} %\bibliography{RefUSimple}
\bibliography{SrefSimpleSinNombre} 
\end{document}